\newcommand{\1}[1]{{\mathbf 1}{\{#1\}}}
\newcommand{\vr}{\varrho}
\newcommand{\eps}{\varepsilon}
\newcommand{\Z}{{\mathbb Z}}
\newcommand{\V}{{\mathcal V}}
\newcommand{\M}{{\mathcal M}}
\newcommand{\G}{{\mathcal G}}
\newcommand{\R}{{\mathbb R}}
\newcommand{\T}{{\mathcal T}}
\newcommand{\s}{{\widehat S}}
\let\phi=\varphi
\newcommand{\E}{{\mathbb E}}
\newcommand{\tX}{{\widetilde X}}
\newcommand{\diam}{{\mathop{\mathrm{diam}}}}
\newcommand{\8}{{\infty}}
\newcommand{\IP}{{\mathbb P}}
\newcommand{\IE}{{\mathbb E}}
\newcommand{\hP}{\widehat{P}}
\newcommand{\hE}{\widehat{E}}
\newcommand{\capa}{\mathop{\mathrm{cap}}}
\newcommand{\Cor}{\mathop{\mathrm{Cor}}}
\newcommand{\Var}{\mathop{\mathrm{Var}}}
\newcommand{\hcapa}{\mathop{\widehat{\mathrm{cap}}}}
\newcommand{\hm}{\mathop{\mathrm{hm}}\nolimits}
\newcommand{\dist}{\mathop{\mathrm{dist}}}
\newcommand{\htau}{\widehat{\tau}}
\newtheorem{theo}{Theorem}[section]
\newtheorem{lem}[theo]{Lemma}
\newtheorem{df}[theo]{Definition}
\newtheorem{prop}[theo]{Proposition}
\newtheorem{rem}[theo]{Remark}
\title{Two-dimensional random interlacements and late points 
for random walks}
\author{Francis Comets$^{1}$ \and
 Serguei~Popov$^{2}$ \and Marina Vachkovskaia$^{2}$}
\begin{document}

\maketitle

{\footnotesize 
\noindent $^{~1}$Universit\'e Paris Diderot -- Paris 7, 
Math\'ematiques, 
 case 7012, F--75205 Paris
Cedex 13, France
\\
\noindent e-mail:
\texttt{comets@math.univ-paris-diderot.fr}

\noindent $^{~2}$Department of Statistics, Institute of Mathematics,
 Statistics and Scientific Computation, University of Campinas --
UNICAMP, rua S\'ergio Buarque de Holanda 651,
13083--859, Campinas SP, Brazil\\
\noindent e-mails: \texttt{\{popov,marinav\}@ime.unicamp.br}

}

\begin{abstract}
We define the model of two-dimensional random interlacements
using simple random walk trajectories conditioned on never hitting
the origin, and then obtain some properties of this model. 
Also, for random walk on a large torus conditioned on not 
hitting the origin up to some time proportional 
to the mean cover time, we show that 
the law of the vacant set around the origin is close
to that of random interlacements at the corresponding level. 
Thus, this new model provides a way to understand the structure 
of the set of late points of the covering process from a
microscopic point of view.
 \\[.3cm]\textbf{Keywords:} random interlacements, hitting time, 
 simple random walk, Doob's $h$-transform
\\[.3cm]\textbf{AMS 2010 subject classifications:}
Primary 60K35. Secondary 60G50, 82C41.
 
\end{abstract}

\section{Introduction}
\label{s_intro}

We start by an informal description of our purpose.

%

\subsection{Random interlacements in two dimensions}
\label{s_RI}
Random interlacements were introduced by Sznitman in~\cite{Szn10},
motivated by the problem of disconnection of the discrete
torus $\Z_n^d:=\Z^d/n\Z^d$ by the trace of simple random walk,
in dimension~$3$ or higher. Detailed accounts can be found in the survey~\cite{CT12} and
the recent book~\cite{DRS14}.
Loosely speaking, the model of random interlacements
in~$\Z^d$, $d\geq 3$, is a stationary 
Poissonian soup of (transient) doubly infinite simple random
walk trajectories on the integer lattice. There is an additional 
parameter~$u>0$ entering the intensity measure of the Poisson 
process, the larger $u$~is the more trajectories are thrown in.
The sites of~$\Z^d$ that are not touched by the trajectories constitute
the \emph{vacant set}~$\V^u$.
The random interlacements are constructed simultaneously for all $u>0$
in such a way that $\V^{u_1}\subset \V^{u_2}$ if $u_1>u_2$.
In fact, the law of the vacant set at level~$u$ can be
uniquely characterized by the following identity:
\begin{equation}
\label{eq_vacant>3}
 \IP[A\subset \V^u] = \exp\big(-u \capa(A)\big)
\quad \text{for all finite $A\subset\Z^d$},
\end{equation}
where $\capa(A)$ is the \emph{capacity} of~$A$.
Informally, the capacity measures how ``big'' the set
 is from the point of view of the walk, see Section~6.5
of~\cite{LL10} for formal definitions, 
as well as~\eqref{df_eq_measure}--\eqref{df_cap_trans} below.

At first glance, the title of this section seems to be meaningless,
just because even a single trajectory of two-dimensional
simple random walk a.s.\ visits all sites of~$\Z^2$, so the vacant
set would be always empty. Nevertheless, there is also a natural notion of capacity in two dimensions (cf.\ Section~6.6
of~\cite{LL10}), 
 so one may wonder if there is a way to construct
 a decreasing family $(\V^\alpha, \alpha>0)$ of random subsets
 of~$\Z^2$ in such a way that
a formula analogous to~\eqref{eq_vacant>3} 
holds for every finite~$A$. This is, however, clearly not possible
since the two-di\-men\-sional capacity of one-point sets equals~$0$.
On the other hand, it turns out to be possible to construct 
such a family so that 
\begin{equation}
\label{eq_vacant2}
 \IP[A\subset \V^\alpha] = \exp\big(-\pi\alpha \capa(A)\big)
\end{equation}
 holds \emph{for all sets containing the origin}
(the factor~$\pi$ in the exponent is just for 
convenience, as explained below).
We present this construction in Section~\ref{s_results}.
To build the interlacements, 
we use trajectories of simple random walks 
conditioned on never hitting the origin. 
Of course, the law of the vacant set is no longer translationally
invariant, but we show that it has the property of
\emph{conditional} translation invariance, 
cf.\ Theorem~\ref{t_properties_RI} below.
In addition, we will see that (similarly to the $d\geq 3$
case) the random object we construct has strong connections
to random walks on two-dimensional torus. All this makes 
us believe that ``two-dimensional random interlacements'' is 
the right term for the object we introduce in this paper.


\subsection{Cover time and late points of simple random walk on 
a discrete torus}
\label{s_late}
Consider the simple random walk on the two-dimensional 
discrete torus $\Z^2_n$ with the starting point chosen 
uniformly at random. Let~$\T_n$ be the first moment
when this random walk visits all sites of~$\Z^2_n$;
we refer to~$\T_n$ as the \emph{cover time}
of the torus. It was shown in~\cite{DPRZ04}
that $\frac{\T_n}{n^2\ln^2 n}\to \frac{4}{\pi}$
in probability; later, this result was refined
in~\cite{D12}, and then even finer results on the first correction to this limit were
obtained in~\cite{BK14} for the similar problem
of covering the (continuous) torus with a Brownian sausage.

The structure of the set of \emph{late points} 
(i.e., the set of points that are still unvisited up to
a given time) of the random walk 
on the torus is rather well understood in dimensions $d\geq 3$,
see~\cite{B13,MS13}, 
and also~ \cite{GdH14} for the continuous case.
 On the other hand, much remains to be
discovered in two dimensions. 
After the heuristic arguments of~\cite{BH91} revealing an intriguing random set,
 it was shown
in~\cite{DPRZ06} that this set has interesting fractal-like properties when the 
elapsed time is a fraction of the expected cover time. This particular behaviour is induced by 
long distance correlations between hitting times due to recurrence.
In this paper, we prove that the law of the uncovered set 
around the origin at time $\frac{4\alpha}{\pi}n^2\ln^2 n$
\emph{conditioned} on the event that the origin is uncovered,
is close to the law of two-dimensional random
interlacements at level~$\alpha$ (Theorem~\ref{t_conditional}). 
We hope that this result
will lead to other advances in understanding the structure
of the uncovered set. 

We now explain why conditioning is necessary to observe a 
meaningful point process.
In two dimensions, if we know that simple random walk has 
visited a given site by a large time, then it is likely, 
by recurrence, that it has  has 
visited all the nearby sites as well. 
This means that a fixed-size window around the origin
will be typically either full (on the event that the origin
was visited) 
or empty (if the origin was not yet visited). 
Therefore, we need to
condition on a rare event to obtain a nontrivial limit.


As a side note, observe that the two-dimensional random interlacements 
relate to the simple random walk on the torus at a time 
proportional to the cover time. In higher dimensions, one starts to observe the 
``interlacement regime'' already at times below the cover time
by a factor of~$\ln n$.
\medskip

\textbf{Organisation of the paper}: In Section~\ref{s_results}
we construct the model of random interlacements and present some of its properties. 
In Section~\ref{s_rw_interl} we formulate a result 
relating this model and the vacant 
set of the simple random walk on the discrete torus.
We prove some results on the spot
-- when short arguments are available -- postponing the proof 
of the other ones to Section~\ref{s_proofs}. 
 Section~\ref{s_aux} contains 
a number of auxiliary facts needed for the proof of the main results.

\section{Definitions and results}
\label{sec:def-res}

We start by defining the two-dimensional 
 random interlacement process, which involves
some potential-theoretic considerations. 

\subsection{Random interlacements: definitions, properties}
\label{s_results}
Let~$\|\cdot\|$ be the Euclidean norm. Define the (discrete)
ball
\[
 B(x,r) = \{y\in \Z^2: \|y-x\|\leq r\}
\]
(note that $x$ and $r$ need not be integer), and
abbreviate $B(r):=B(0,r)$.
We write~$x\sim y$ if $x$ and $y$ are neighbours on~$\Z^2$.
The (internal) boundary of $A\subset\Z^2$ is defined by
\[
 \partial A = \{x\in A: \text{there exists }y\in \Z^2\setminus A
 \text{ such that }x\sim y\}.
\]
Let~$(S_n, n\geq 0)$ be two-dimensional simple
random walk. Write~$P_x$ for the law of the walk started from~$x$
and~$E_x$ for the corresponding expectation.
Let
\begin{align}
\tau_0(A) &= \inf\{k\geq 0: S_k\in A\} \label{entrance_t},\\
\tau_1(A) &= \inf\{k\geq 1: S_k\in A\} \label{hitting_t}
\end{align}
be the entrance and the hitting time of the set~$A$ by 
simple random walk~$S$ (we use the convention $\inf \emptyset = +\8$). 
For a singleton $A=\{x\}$, we will write $\tau_i(A)=\tau_i(x)$, $i=0,1$,
for short. 
Define the potential kernel~$a$ by
\begin{equation}
\label{def_a(x)}
a(x) = \sum_{k=0}^\infty\big(P_0[S_k\!=\!0]-P_x[S_k\!=\!0]\big).
\end{equation}
It can be shown that the above series indeed converges 
and we have~$a(0)=0$, $a(x)>0$ for $x\neq 0$, and
\begin{equation}
\label{formula_for_a}
 a(x) = \frac{2}{\pi}\ln \|x\| + \gamma' + O(\|x\|^{-2}) 
\end{equation}
as $x\to\infty$, cf.\ Theorem~4.4.4 of~\cite{LL10}
(the value of $\gamma'$ is 
known\footnote{$\gamma'=\pi^{-1}(2\gamma+\ln 8)$, 
where $\gamma=0.5772156\dots$ is the Euler-Mascheroni constant}, 
but we will not need it in
this paper).
Also, the function~$a$ is harmonic outside the origin, i.e.,
\begin{equation}
\label{a_harm}
 \frac{1}{4}\sum_{y: y\sim x}a(y) = a(x) \quad \text{ for all }
 x\neq 0.
\end{equation}
Observe that~\eqref{a_harm} immediately implies 
that $a(S_{k\wedge \tau_0(0)})$ is a martingale, we will
repeatedly use this fact in the sequel.
With some abuse of notation, we also consider
the function 
\[
a(r)=\frac{2}{\pi}\ln r + \gamma' 
\]
of a \emph{real}
argument~$r\geq 1$. The advantage of using this notation is e.g.\ that, 
due to~\eqref{formula_for_a}, we may write, as $r\to\infty$,
\begin{equation}
\label{real_a}
 \sum_{y\in\partial B(x,r)} \nu(y)a(y) = a(r) 
+ O\Big(\frac{\|x\|\vee 1}{r}\Big)
\end{equation}
for \emph{any} probability measure~$\nu$ on $\partial B(x,r)$.
The \emph{harmonic measure} of a finite $A\subset\Z^2$
is 
the entrance law ``starting at infinity'',
\begin{equation}
\label{def_hm}
 \hm_A(x) = \lim_{\|y\|\to\infty}P_y[S_{\tau_1(A)}=x].
\end{equation}
The existence of the above limit
follows from Proposition~6.6.1 of~\cite{LL10}; also, this 
proposition together with~(6.44) implies that
\begin{equation}
\label{hm_escape}
\hm_A(x) = \frac{2}{\pi}\lim_{R\to \infty} 
P_x\big[\tau_1(A)>\tau_1\big(\partial B(R)\big)\big]\ln R .
\end{equation}
Intuitively, \eqref{hm_escape} means that
the harmonic measure at $x\in\partial A$
is proportional to the probability of escaping from~$x$
to a large sphere. Observe also that,
by recurrence of the walk, $\hm_A$ is a probability measure
on~$\partial A$. 
Now, for a finite set~$A$ containing the origin,
we define its capacity by
\begin{equation}
\label{df_cap2}
\capa(A) = \sum_{x\in A}a(x)\hm_A(x);
\end{equation}
in particular, $\capa\big(\{0\}\big)=0$ since $a(0)=0$.
For a set not containing the origin, its capacity is defined
as the capacity of a translate of this set that does contain
the origin. Indeed, it can be shown that the capacity does not depend
on the choice of the translation.
A number of alternative definitions are available,
cf.\ Section~6.6 of~\cite{LL10}. 
Intuitively, the capacity of $A$ represents the difference in size between the set $A$ and a single point, as seen from infinity:
From (6.40) and the formula above Proposition 6.6.2 in~\cite{LL10}, we can write
\[
P_x\big[\tau_1\big(\partial B(r)\big) <\tau_1(A)\big] 
= \frac{ \ln \|x\| + \frac{\pi \gamma'}{2} - \frac{\pi}{2} \capa(A) 
 + \eps_A(x) + \eps_{A,x}'(r)}{\ln r},
\]
where $\eps_A(x)$ vanishes  as $\|x\| \to \8$ and $\eps_{A,x}'(r)$ vanishes  as $r \to \8$ keeping fixed the other variables.
Observe that, by symmetry,
the harmonic measure of any two-point set is uniform,
so $\capa\big(\{x,y\}\big)=\frac{1}{2}a(y-x)$ for any $x,y\in\Z^2$.
Also, \eqref{real_a} implies that
\begin{equation}
\label{capa_ball}
 \capa\big(B(r)\big) = a(r) + O(r^{-1}).
\end{equation}

Let us define another random walk $(\s_n, n\geq 0)$
on~$\Z^2$ (in fact, on~$\Z^2\setminus \{0\}$) in the following way:
the transition probability from~$x$ to~$y$ 
equals $\frac{a(y)}{4a(x)}$ for all $x\sim y$
(this definition does not make sense for $x=0$, but this is 
not a problem since the walk~$\s$ can never enter the origin anyway). The 
walk~$\s$ can be thought of as the Doob $h$-transform
of the simple random walk, under condition of not hitting the origin
(see Lemma~\ref{l_relation_S_hatS} and its proof).
Note that~\eqref{a_harm} implies that the random walk~$\s$
is indeed well defined, and, clearly, it is 
an irreducible Markov chain on~$\Z^2\setminus \{0\}$. We 
denote by $\hP_x, \hE_x$ the probability and expectation
for the random walk~$\s$ started from~$x \neq 0$.
Let $\htau_0, \htau_1$ be defined as in 
\eqref{entrance_t}--\eqref{hitting_t}, but with~$\s$ in the place of~$S$.
Then, it is straightforward to observe that
\begin{itemize}
 \item the walk~$\s$ is reversible, with the reversible
 measure~$\mu_x:=a^2(x)$;
 \item in fact, it can be represented as a random walk
 on the two-dimensional lattice with conductances (or weights)
 $\big(a(x)a(y), x,y\in \Z^2, x\sim y\big)$;
 \item $\big(a(x), x\in \Z^2\setminus \{0\}\big)$ is an
 excessive measure for~$\s$  (i.e., for all $y \neq 0$, $\sum_x a(x) \hP_x(\s_1=y) \leq a(y)$), with equality failing at the 
 four neighbours
 of the origin. Therefore, by e.g.\
 Theorem~1.9 of Chapter~3 of~\cite{R84}, the random walk~$\s$
 is transient;
 \item an alternative argument for proving transience is
the following: let~$\mathcal{N}$ be the set of the four
neighbours of the origin. Then, a direct calculation 
shows that $1/a(\s_{k\wedge \htau_0(\mathcal{N})})$ is a 
martingale. The transience then follows from Theorem~2.2.2 of~\cite{FMM}.
\end{itemize}

Our next definitions are appropriate for the transient case.
For a finite~$A\subset \Z^2$,
we define the \emph{equilibrium measure} 
\begin{equation}
\label{df_eq_measure}
 \widehat e_A(x) = \1{x\in A} \hP_x[\htau_1(A)=\infty]\mu_x,
\end{equation}
and the capacity (with respect to~$\s$) 
\begin{equation}
\label{df_cap_trans}
 \hcapa(A) = \sum_{x\in A}\widehat e_A(x).
\end{equation}
Observe that, since $\mu_0=0$, it holds that
$\hcapa(A)=\hcapa(A\cup\{0\})$ for any set~$A\subset \Z^2$.

Now, we use the general construction of random interlacements
on a transient weighted graph introduced in~\cite{T09}.
In the following few lines we briefly summarize this
construction.
Let~$W$ be the space of all doubly infinite nearest-neighbour 
transient trajectories in~$\Z^2$,
\begin{align*}
 W =& \big\{\vr=(\vr_k)_{ k\in \Z}: 
\vr_k\sim \vr_{k+1} \text{ for all }k;\\
&~~~~~~~~~~\text{ the set }
 \{m: \vr_m=y\} \text{ is finite for all }y\in\Z^2 \big\}.
\end{align*}
We say that~$\vr$ and~$\vr'$ are equivalent if they 
coincide after a time shift, i.e., $\vr\sim\vr'$
when there exists~$k$ such that $\vr_{m+k}=\vr_m$ for all~$m$.
Then, let $W^*=W/\sim$ be the space of trajectories
modulo time shift, and define~$\chi^*$ to be the canonical
projection from~$W$ to~$W^*$. For a finite $A\subset \Z^2$, 
let~$W_A$ be the set of trajectories in~$W$ that intersect~$A$,
and we write~$W^*_A$ for the image of~$W_A$ under~$\chi^*$.
One then constructs the random interlacements as Poisson
point process on $W^*\times \R^+$ with the intensity measure
$\nu\otimes du$, where~$\nu$ is described in the following
way. It is the unique sigma-finite measure on  the cylindrical sigma-field  of~$W^*$
such that for every finite~$A$
\[
 \mathbf{1}_{W^*_A} \cdot \nu = \chi^* \circ Q_A,
\]
where the finite measure~$Q_A$ on~$W_A$ is determined by the
following equality:
\[
Q_A\big[(\vr_k)_{k\geq 1}\!\in \!F, \vr_0\!=\!x, (\vr_{-k})_{k\geq 1}\!\in \!G\big]
=  \widehat e_A(x) \cdot \hP_x[F] \cdot \hP_x[G\mid \htau_1(A)\!=\!\infty].
\]
The existence and uniqueness of~$\nu$ was shown in 
Theorem~2.1 of~\cite{T09}.

\begin{df} \label{def:ri}
For a configuration $\sum_{\lambda}\delta_{(w^*_\lambda,u_\lambda)}$
of the above Poisson process, the {process of random interlacements
at level~$\alpha$} (which will be referred to as RI($\alpha$))
is defined as the set of trajectories with label less than or equal 
to~$\pi\alpha$, i.e.,
\[
 \sum_{\lambda: u_\lambda\leq \pi\alpha}  
 \delta_{w^*_\lambda} \;.
\]
\end{df}

Observe that this definition is somewhat unconventional (we used~$\pi\alpha$
instead of just~$\alpha$, as one would normally do), but we will
see below that it is quite reasonable in two dimensions,
since the formulas become generally cleaner.

It is important to have in mind the following ``constructive''
description of random interlacements at level~$\alpha$ ``observed''
on a finite set $A\subset \Z^2$. Namely,
\begin{itemize}
 \item take a Poisson($\pi\alpha\hcapa(A)$) number of particles;
 \item place these particles on the boundary of~$A$
 independently, with distribution
 $\overline{e}_A = \big((\hcapa A)^{-1}\widehat e_A(x), x\in A\big)$;
 \item let the particles perform independent $\s$-random walks
 (since~$\s$ is transient, each walk only leaves a finite trace
 on~$A$).
\end{itemize}

It is also worth mentioning that the FKG inequality holds
for random interlacements, cf.\ Theorem~3.1 of~\cite{T09}.

The \emph{vacant set} at level $\alpha$,
\[
 \V^\alpha = \Z^2 \setminus \bigcup_{\lambda: u_\lambda \leq \pi\alpha} \omega^*_\lambda
 (\Z),
\]
is the set of lattice points not covered by the random interlacement. It contains the origin by definition. In Figure~\ref{f_simulation} we present
a simulation of the vacant set for different values of the 
parameter.
\begin{figure}
\begin{center}
\includegraphics[width=0.7\textwidth]{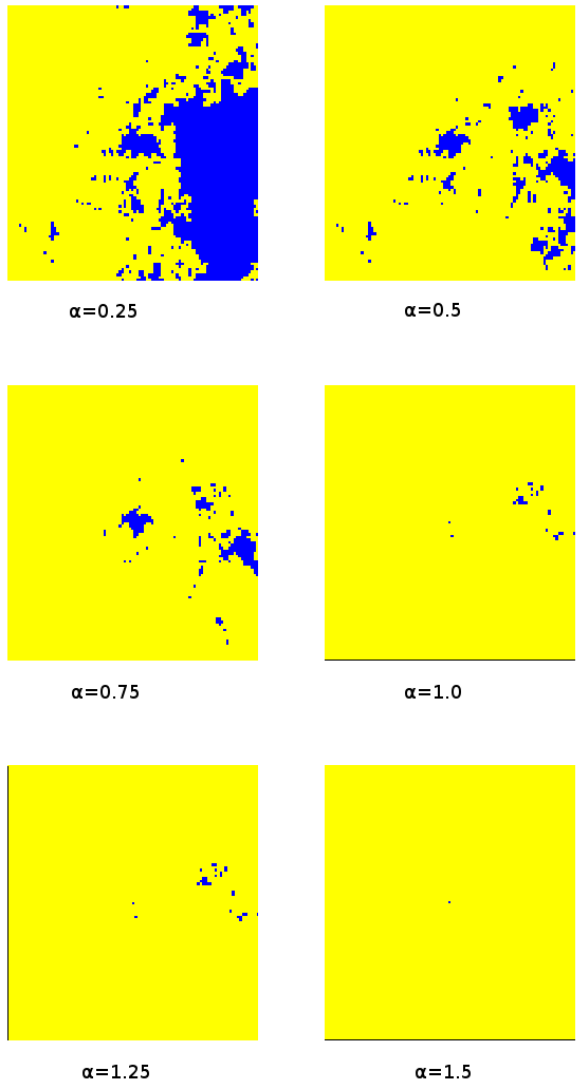}
\caption{A realization of the vacant set (dark blue) of RI($\alpha$) 
for different values of~$\alpha$.
For $\alpha=1.5$ the only vacant site is the origin.
Also, note that we see the same neighbourhoods 
of the origin for $\alpha=1$ and~$\alpha=1.25$;
this is not surprising since just a few new walks enter the picture when 
increasing the rate by a small amount.}
\label{f_simulation}
\end{center}
\end{figure}

As a last step, we need to show that we have indeed 
constructed the object for which~\eqref{eq_vacant2} 
is verified. For this, we need to prove the following fact:
\begin{prop}
\label{p_equalcapa}
 For any finite set $A\subset \Z^2$ such that $0\in A$ it holds that
$\capa(A)=\hcapa(A)$.
\end{prop}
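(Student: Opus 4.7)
The plan is to reduce the identity $\capa(A) = \hcapa(A)$ to a pointwise relation at each $x \in A$. Since $a(0) = 0$ yields $\mu_0 = 0$, the $x = 0$ terms drop from both sums, so it suffices to prove
\[
 \hP_x[\htau_1(A) = \infty] \;=\; \frac{\hm_A(x)}{a(x)} \qquad \text{for every } x \in A \setminus \{0\};
\]
multiplying by $\mu_x = a^2(x)$ and summing over $x$ will then convert the definition of~$\hcapa(A)$ into that of~$\capa(A)$.

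The main tool is the standard Doob $h$-transform identity stemming from $\widehat p(x,y) = a(y)/(4 a(x))$: for any~$n$ and any event $F$ depending on $\s_0, \ldots, \s_n$, telescoping the product of one-step transitions along a nearest-neighbour path shows that
\[
 \hP_x[F] \;=\; \frac{1}{a(x)}\, E_x\!\left[a(S_n)\, \mathbf{1}_F\, \mathbf{1}_{\{\tau_1(0) > n\}}\right].
\]
I would apply this at the stopping time $\widehat\sigma := \inf\{k \geq 1 : \s_k \in A \cup \partial B(R)\}$, with $R$ large enough that $A \subset B(R-1)$, and its analogue~$\sigma$ for~$S$. The crucial point, and the place where the hypothesis $0 \in A$ enters, is that on $\{S_\sigma \in \partial B(R)\}$ the path automatically avoids the origin up to time~$\sigma$, so the $\tau_1(0)$-indicator is redundant and one obtains
\[
 \hP_x\!\left[\htau_1(A) > \widehat\sigma\right] \;=\; \frac{1}{a(x)}\, E_x\!\left[a(S_\sigma)\, \mathbf{1}_{\{S_\sigma \in \partial B(R)\}}\right].
\]

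Finally I would let $R \to \infty$. By~\eqref{formula_for_a} we have $a(z) = a(R) + O(R^{-1})$ uniformly for $z \in \partial B(R)$, and \eqref{hm_escape} gives $P_x[\tau_1(A) > \tau_1(\partial B(R))] = \frac{\pi \hm_A(x)}{2\ln R}(1 + o(1))$; together with $a(R) = \frac{2}{\pi}\ln R + \gamma'$ this forces the right-hand side to converge to $\hm_A(x)/a(x)$. The left-hand side is monotone nonincreasing in~$R$ with limit $\hP_x[\htau_1(A) = \infty]$, using the transience of~$\s$ noted in the bullet list before Definition~\ref{def:ri}. This yields the pointwise identity and hence the proposition. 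The only real obstacle is justifying the $h$-transform relation at the random time~$\widehat\sigma$; once one sees that $0 \in A$ makes the killing indicator $\mathbf{1}_{\{\tau_1(0) > \sigma\}}$ automatic, the rest is a short asymptotic computation.
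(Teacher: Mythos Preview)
Your argument is correct and is essentially the same as the paper's: both reduce to the pointwise identity $a(x)\hP_x[\htau_1(A)=\infty]=\hm_A(x)$ by comparing $\hP_x[\htau_1(\partial B(R))<\htau_1(A)]$ with $P_x[\tau_1(\partial B(R))<\tau_1(A)]$ via the $h$-transform weight $a(\cdot)/a(x)$ and then letting $R\to\infty$ using~\eqref{formula_for_a} and~\eqref{hm_escape}. The only cosmetic difference is that the paper writes the $h$-transform relation as an explicit sum over paths, whereas you package it in the formula $\hP_x[F]=a(x)^{-1}E_x[a(S_n)\mathbf{1}_F\mathbf{1}_{\{\tau_1(0)>n\}}]$ and then note that $0\in A$ makes the killing indicator redundant on $\{S_\sigma\in\partial B(R)\}$.
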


\begin{proof}
 Indeed, consider an arbitrary~$x\in\partial A$, $x\neq 0$,
and (large)~$r$ such that $A\subset B(r-2)$. 
Write using~\eqref{formula_for_a}
\begin{align*}
 \hP_x\big[\htau_1(A)>\htau_1\big(\partial B(r)\big)\big] & =
   \sum_\vr \frac{a(\vr_{\text{end}})}{a(x)}
   \Big(\frac{1}{4}\Big)^{|\vr|}\\
   &= \big(1+o(1)\big)\frac{\frac{2}{\pi}\ln r}{a(x)}
   \sum_\vr \Big(\frac{1}{4}\Big)^{|\vr|}\\
   &= \big(1+o(1)\big)\frac{\frac{2}{\pi}\ln r}{a(x)}\,
    P_x\big[\tau_1(A)>\tau_1\big(\partial B(r)\big)\big],
\end{align*}
where the sums are taken over all trajectories~$\vr$ 
that start at~$x$, end at~$\partial B(r)$, and avoid
$A\cup \partial B(r)$ in between;
$\vr_{\text{end}}\in \partial B(r)$ stands for the 
ending point of the trajectory, and~$|\vr|$ is the 
trajectory's length.
Now, we send~$r$ to infinity and use~\eqref{hm_escape}
to obtain that, if $0\in A$, 
\begin{equation}
\label{escape_identity}
a(x) \hP_x[\htau_1(A)=\infty] = \hm_A(x).
\end{equation}
Multiplying by~$a(x)$ and summing over
 $x\in A$ (recall that $\mu_x=a^2(x)$) we 
obtain the expressions in~\eqref{df_cap2} and~\eqref{df_cap_trans}
and thus conclude the proof.
\end{proof}

Together with formula~(1.1) of~\cite{T09}, Proposition~\ref{p_equalcapa} shows the fundamental relation~(\ref{eq_vacant2})
announced in introduction:
for all finite subsets~$A$ of $\Z^2$ containing the origin,
\[
\IP[ A \subset \V^\alpha] = \exp\big(-\pi\alpha \capa(A)\big).
\] 
As mentioned before, the law of two-dimensional random interlacements
is not translationally invariant, 
although it is of course invariant with respect to reflections/rotations
of~$\Z^2$ that preserve the origin.
Let us describe some other basic properties of two-dimensional
random interlacements:
\begin{theo}
\label{t_properties_RI}
\begin{itemize}
 \item[(i)] For any $\alpha>0$, $x\in\Z^2$,
 $A\subset \Z^2$, it holds that 
\begin{equation}
\label{properties_RI_i}
 \IP[A\subset\V^\alpha \mid x\in \V^\alpha]=
  \IP[-A+x\subset\V^\alpha \mid x\in \V^\alpha].
\end{equation}
More generally, for all  $\alpha>0$, $x\in\Z^2 \setminus \{0\}$,
 $A\subset \Z^2$, and any
 lattice isometry~$M$ exchanging $0$ and $x$, we have 
\begin{equation}
\label{properties_RI_i'}
 \IP[A\subset\V^\alpha \mid x\in \V^\alpha]=
  \IP[MA \subset\V^\alpha \mid x\in \V^\alpha].
\end{equation}
 \item[(ii)] With $\gamma'$ from (\ref{formula_for_a}) we have
\begin{equation}
\label{properties_RI_ii}
\IP[x\in \V^\alpha]=\exp\Big(-\pi\alpha \frac{a(x)}{2}
\Big)
=e^{-\gamma'\pi\alpha/2}\|x\|^{-\alpha}\big(1+O(\|x\|^{-2})\big).
\end{equation}
 \item[(iii)] For~$A$ such that $0\in A\subset B(r)$
and $x\in\Z^2$ such that $\|x\|\geq 2r$ we have
\begin{equation}
\label{properties_RI_iii}
 \IP[A\subset\V^\alpha \mid x\in \V^\alpha]= 
   \exp\Bigg(-\frac{\pi\alpha}{4}\capa(A)
\frac{1+O\big(\frac{r\ln r \ln\|x\|}{\|x\|}\big)}
{1-\frac{\capa(A)}{2a(x)}
+O\big(\frac{r\ln r}{\|x\|}\big)}
\Bigg).
\end{equation}
 \item[(iv)] For $x,y\neq 0$, $x\neq y$, we have 
$\IP\big[\{x,y\}\subset \V^\alpha\big] 
= \exp\big(-\pi \alpha\Psi\big)$,
where 
\[
  \Psi = \frac{a(x)a(y)a(x-y)}
 {a(x)a(y)+a(x)a(x-y)+a(y)a(x-y)-\frac{1}{2}
 \big(a^2(x)+a^2(y)+a^2(x-y)\big)}.
\]
%
Moreover, 
as $s:= \|x\| \to \infty$, $\ln \|y\|  \sim \ln s$ and
$\ln \|x-y\|\sim \beta \ln s$ with some $\beta\in [0,1]$, we have
\begin{equation*}
 \IP\big[\{x,y\}\subset \V^\alpha\big]
= s^{-\frac{4\alpha}{4-\beta}+o(1)},
\end{equation*}
and 
polynomially decaying correlations 
(cf.~(\ref{def:cor}) in Remark~\ref{rem24} for the definition), 
\begin{equation}
\label{eq:cor}
\Cor\big(
 {\{x\in \V^\alpha\}}, 
 {\{y\in \V^\alpha\}}\big)
= s^{-\frac{\alpha \beta}{4-\beta}+o(1)}.
\end{equation}
 \item[(v)] Assume that $\ln \|x\|\sim \ln s$, $\ln r\sim\beta \ln s$ with $\beta<1$.
Then, as $s \to \8$,
\begin{equation}
\label{properties_RI_v}
  \IP\big[B(x,r)\subset \V^\alpha\big] 
= s^{-\frac{2\alpha}{2-\beta}+o(1)}.
\end{equation}
\end{itemize}
\end{theo}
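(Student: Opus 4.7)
The plan is to reduce~\eqref{properties_RI_v} to a capacity estimate and then to compute the capacity via discrete potential theory. The key initial observation is that $0\in\V^\alpha$ by construction, so the events $\{B(x,r)\subset\V^\alpha\}$ and $\{B(x,r)\cup\{0\}\subset\V^\alpha\}$ coincide; writing $A := B(x,r)\cup\{0\}$ and applying~\eqref{eq_vacant2} to this set (which contains the origin) yields
\[
\IP[B(x,r)\subset\V^\alpha] = \exp\bigl(-\pi\alpha\,\capa(A)\bigr).
\]
Hence it suffices to prove $\pi\,\capa(A) = \frac{2\ln s}{2-\beta}\bigl(1+o(1)\bigr)$ as $s\to\infty$.

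To estimate $\capa(A)$, I would combine two inputs. First, from~\eqref{df_cap2}, $a(0)=0$, and the uniform expansion $a(y)=a(x)(1+o(1))$ on $\partial B(x,r)$ (which follows from~\eqref{formula_for_a} since $\beta<1$ forces $r/\|x\|\to 0$),
\[
\capa(A) = a(x)\,(1-q)\,(1+o(1)), \qquad q := \hm_A(0).
\]
It remains to pin down $q$, the probability that a walk ``from infinity'' hits $A$ first at the origin. For this I would introduce $f(w):=P_w[S_{\tau_1(A)}=0]$, which is bounded and harmonic on $\Z^2\setminus A$ with $f(0)=1$, $f\equiv 0$ on $\partial B(x,r)$, and $f(w)\to q$ at infinity. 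The natural explicit approximation is
\[
\tilde f(w) := \frac{a(w-x)-a(w)}{2a(x)-a(r)} + \frac{a(x)-a(r)}{2a(x)-a(r)},
\]
which is harmonic on $\Z^2\setminus\{0,x\}\supset\Z^2\setminus A$. The coefficients are tuned so that $\tilde f(0)=1$ exactly (using $a(-x)=a(x)$), and so that $\tilde f\equiv 0$ on $\partial B(x,r)$ up to errors of order $O\bigl((1/r+r/\|x\|)/\ln s\bigr)=o(1)$, coming from~\eqref{formula_for_a} applied on the discrete boundary. Since $a(w-x)-a(w)\to 0$ as $\|w\|\to\infty$, the candidate value of the limit is $C := \frac{a(x)-a(r)}{2a(x)-a(r)}$.

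To pass from $\tilde f$ to $f$, I consider $g:=f-\tilde f$, which is bounded, harmonic on $\Z^2\setminus A$, with $g(0)=0$ and $\max_{\partial B(x,r)}|g|=o(1)$. The probabilistic representation $g(w)=E_w\bigl[g(S_{\tau_1(A)})\bigr]$ (valid by recurrence of $S$ and bounded-convergence optional stopping) then gives
\[
\lim_{\|w\|\to\infty} g(w) = \sum_{y\in\partial A}\hm_A(y)\,g(y) = o(1),
\]
hence $q=C+o(1)$ and $1-q=\frac{a(x)}{2a(x)-a(r)}+o(1)$. Combined with the capacity identity above,
\[
\capa(A) = \frac{a^2(x)}{2a(x)-a(r)}\,(1+o(1)).
\]
Plugging in $a(x)=\tfrac{2}{\pi}\ln s\,(1+o(1))$ and $a(r)=\tfrac{2\beta}{\pi}\ln s\,(1+o(1))$ yields $\pi\,\capa(A)\sim \frac{2\ln s}{2-\beta}$, which is~\eqref{properties_RI_v}. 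The main technical step is the uniform $o(1)$ control of the boundary error of $\tilde f$ on $\partial B(x,r)$, together with the justification of the approach-to-limit $P_w[S_{\tau_1(A)}=\cdot]\to \hm_A$ needed to evaluate $\lim g(w)$; both rely on~\eqref{formula_for_a} and~\eqref{real_a} applied on the discretized spheres near $\partial B(x,r)$ and at infinity.
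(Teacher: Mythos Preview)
Your argument is correct and yields exactly the leading-order capacity formula
\[
\capa\bigl(\{0\}\cup B(x,r)\bigr)=\frac{a^2(x)}{2a(x)-a(r)}\bigl(1+o(1)\bigr),
\]
which is all that is needed for~\eqref{properties_RI_v}. The paper, however, obtains this formula by a different route: it invokes Lemma~\ref{l_cap_distantball}~(i), which in turn is proved via the identity $\capa(A)=\lim_{\|z\|\to\infty}a(z)\hP_z[\htau_1(A)<\infty]$ (equation~\eqref{expr_Cap}) together with the computation of $\hP_z[\htau_1(B(x,r))<\infty]$ in Lemma~\ref{l_escape_from_ball}. That computation sets up and solves a $2\times 2$ linear system for the probabilities of first hitting~$0$ versus first hitting~$B(x,r)$ before exiting a large ball~$B(R)$, and only at the end passes to the limit $R\to\infty$ in the conditioned walk. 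Your approach bypasses the $\s$-walk entirely: you stay with the unconditioned SRW, use the defining formula~\eqref{df_cap2} for the capacity through the harmonic measure, and compute $\hm_A(0)$ by writing down an explicit harmonic comparison function $\tilde f$ built from $a(\cdot)$ and $a(\cdot-x)$. The maximum-principle/optional-stopping step then transfers the approximate boundary values of $\tilde f$ to the exact function $f$. This is more elementary and self-contained (it does not rely on the machinery of Section~\ref{s_aux_hat_s}), at the cost of giving only $o(1)$ error terms rather than the quantitative $O(r^{-1}+r/\|x\|)$ bounds the paper records in Lemma~\ref{l_cap_distantball}; for part~(v) the weaker control suffices.
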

These results invite a few comments.
\begin{rem} \label{rem24}
\begin{enumerate}
\item The statement in (i) describes an invariance property 
given that a point is vacant.
We refer to it as the conditional stationarity
\item We can interpret (iii) as follows: 
the conditional law of RI($\alpha$) 
given that a distant site~$x$ is vacant, is similar -- near the origin -- 
to the unconditional law of RI($\alpha/4$). Combined with~(i), the similarity holds near~$x$ as well.
Moreover, one can also estimate the ``local rate'' away
from the origin,
see Figure~\ref{f_change_rate}. More specifically,
observe from Lemma~\ref{l_cap_distantball}~(ii) that
$\capa(A_2)\ll \ln s$ with $s={\rm dist}(0,A_2)$ large implies
$\capa\big(\{0\}\cup A_2\big)=\frac{a(s)}{2}(1+o(1))$.
 If $x$ is at a much larger distance from the origin than~$A_2$, 
say $\ln \|x\| \sim \ln( s^2)$,
then~\eqref{properties_RI_iii} reveals a 
``local rate'' equal to $\frac{2}{7}\alpha$,
that is, 
$\IP[A_2\subset\V^\alpha\mid x \in \V^\alpha]=\exp\big(-\frac{2}{7}
\pi\alpha\capa\big(\{0\}\cup A_2\big)(1+o(1))\big)$;
indeed, the expression in the denominator in~\eqref{properties_RI_iii}
equals approximately $1-\frac{\capa(\{0\}\cup A_2)}{2a(x)}
\approx 1 - \frac{a(s)/2}{2a(s^2)}\approx \frac{7}{8}$.
\item    
Recall that the correlation between two events~$A$ and~$B$, 
\[ 
\label{def:cor}
 \Cor(A,B) = \frac{ {\rm Cov}({\mathbf 1}_A, {\mathbf 1}_ B)}
{[ \Var({\mathbf 1}_A) \Var(  {\mathbf 1}_ B)]^{1/2}}\in [-1,1],
\] 
 can be viewed as the cosine of the angle 
between the vectors ${\mathbf 1}_{A}-\IP(A),
 {\mathbf 1}_B-\IP(B)$ in the corresponding Hilbert space $L^2$. 
Then, 
equation~\eqref{eq:cor} relates the geometry of the lattice 
with the  geometry of the random point process: 
For points~$x$ and~$y$ at large distance~$s$ making a small 
angle $s^{1-\beta}$ with the origin, the random variables 
$\1{x\in \V^\alpha}$ and 
$\1{y\in \V^\alpha}$ make, after centering, 
an angle of order $ s^{-\frac{\alpha \beta}{4-\beta}}$ 
in the space of square integrable random variables.
\item By symmetry, the conclusion of (iv) remains 
the same in the situation when
$\ln \|x\|, \ln \|x-y\|\sim \ln s$ and $\ln \|y\|\sim \beta\ln s$.
\end{enumerate}
\end{rem}
\begin{figure}
\begin{center}
\includegraphics{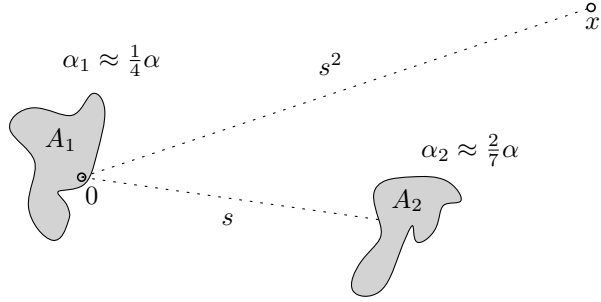}
\caption{How the ``local rate'' looks like if we condition 
on the event that a ``distant'' site is vacant.}
\label{f_change_rate}
\end{center}
\end{figure}
\begin{proof}[Proof of (i) and (ii)]
 To prove~(i), observe that
\[
\capa\big(\{0,x\}\cup A\big) = \capa\big(\{0,x\}\cup (-A+x)\big)
\]
  by symmetry. For the second statement in (i),
note that, for $A'=\{0,x\}\cup A$,
it holds that
$
\capa\big(A'\big) =\capa\big(MA'\big) = \capa\big(\{0,x\}\cup MA \big).
$
Item~(ii) follows from the above mentioned fact that 
$\capa\big(\{0,x\}\big)=\frac{1}{2}a(x)$
together with~\eqref{formula_for_a}.
\end{proof}

We postpone the proof of other parts of Theorem~\ref{t_properties_RI}, 
since it requires some estimates for capacities
of various kinds of sets. 
We now turn to estimates on the cardinality of the vacant set.

\begin{theo}
\label{t_sizevacant}
\begin{itemize}
 \item[(i)] We have
\[
 \E \big(\vert \V^\alpha\cap B(r)\vert \big) \sim
  \begin{cases}
   \frac{2\pi}{2-\alpha}
   e^{-\gamma'\pi\alpha/2} \times r^{2-\alpha}, & \text{ for }\alpha < 2,\\
  2\pi e^{-\gamma'\pi\alpha/2} \times \ln r, & \text{ for }\alpha = 2,\\
   \text{const} , & \text{ for }\alpha > 2.
 \end{cases}
\]
 \item[(ii)] For $\alpha>1$ it holds that $\V^\alpha$ is
 finite a.s. Moreover, $\IP\big[\V^\alpha=\{0\}\big]>0$
 and $\IP\big[\V^\alpha=\{0\}\big]\to 1$ as $\alpha\to\infty$.
 \item[(iii)] For $\alpha \in (0,1)$, we have $|\V^\alpha|=\infty$
a.s. Moreover,
\begin{equation}
\label{eq_emptyball}
 \IP\big[\V^\alpha\cap \big(B(r)\setminus B(r/2)\big)=\emptyset\big]
 \leq r^{-2(1-\sqrt{\alpha})^2+o(1)}.
\end{equation}
\end{itemize}
\end{theo}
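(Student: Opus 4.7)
The plan is to treat the three parts separately, using increasingly refined moment techniques.

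Part (i) is a direct linearity of expectation computation. Writing $\E|\V^\alpha\cap B(r)| = 1 + \sum_{x\in B(r)\setminus\{0\}} \IP[x\in \V^\alpha]$ and invoking Theorem~\ref{t_properties_RI}(ii), the sum reduces to $e^{-\gamma'\pi\alpha/2}\sum_{x\in B(r)\setminus\{0\}}\|x\|^{-\alpha}\bigl(1+O(\|x\|^{-2})\bigr)$. Approximating the lattice sum by the polar integral $\int_1^r 2\pi\rho^{1-\alpha}\,d\rho$ produces the three regimes: the constant $\frac{2\pi}{2-\alpha}e^{-\gamma'\pi\alpha/2}$ in front of $r^{2-\alpha}$ for $\alpha<2$, the logarithm for $\alpha=2$, and a convergent series yielding a positive constant for $\alpha>2$.

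For part (ii), the statement $\IP[\V^\alpha=\{0\}]\to 1$ as $\alpha\to\infty$ follows by Markov: $\IP[\V^\alpha\neq\{0\}]\le\E|\V^\alpha\setminus\{0\}|$, which tends to $0$ by part~(i) since $e^{-\gamma'\pi\alpha/2}\sum_{x\neq 0}\|x\|^{-\alpha}$ vanishes. The finiteness of $\V^\alpha$ for $\alpha>2$ is an immediate Borel--Cantelli consequence of the first-moment summability. For the more delicate range $\alpha\in(1,2]$, I would work annulus-by-annulus with $A_n = B(2^n)\setminus B(2^{n-1})$: using the pair correlation formula in Theorem~\ref{t_properties_RI}(iv) one controls $\Var|\V^\alpha\cap A_n|$ in terms of $(\E|\V^\alpha\cap A_n|)^2$, and a Chebyshev-type bound then yields $\IP[\V^\alpha\cap A_n\neq\emptyset]\le c\cdot 2^{-n\epsilon(\alpha)}$ with $\epsilon(\alpha)>0$ when $\alpha>1$, which is summable. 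The positivity $\IP[\V^\alpha=\{0\}]>0$ would then be obtained from finiteness together with a localization argument: condition on $\V^\alpha\subset B(R)$ (which has positive probability for some random $R$), and show that inside $B(R)$ the event that every non-origin point is covered by trajectories has positive conditional probability, e.g. by comparing to the large-$\alpha$ regime.

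For part (iii), the infinity of $\V^\alpha$ for $\alpha\in(0,1)$ is naturally approached via Paley--Zygmund applied to $|\V^\alpha\cap A_n|$: the first moment grows like $2^{n(2-\alpha)}$ by part~(i), and Theorem~\ref{t_properties_RI}(iv) should give $\E[|\V^\alpha\cap A_n|^2] = O\bigl((\E|\V^\alpha\cap A_n|)^2\bigr)$, yielding $\IP[\V^\alpha\cap A_n\neq\emptyset]\ge c>0$. Then an asymptotic independence argument between trajectories relevant for $A_n$ and $A_m$ with $|n-m|$ large, followed by a Borel--Cantelli~II-type conclusion, upgrades this to infinitely many vacant points almost surely. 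For the annulus bound $r^{-2(1-\sqrt\alpha)^2+o(1)}$, one studies $\IP\bigl[|\V^\alpha\cap(B(r)\setminus B(r/2))|=0\bigr]$ by a Chebyshev inequality optimized through a truncation threshold: the exponent $2(1-\sqrt\alpha)^2$ is the square-completion form suggestive of a saddle-point over the fraction of the annulus that is declared ``likely vacant'', consistent with the log-correlated structure induced by $a(\cdot)$.

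The hardest steps, in my view, are twofold. First, the finiteness claim in part (ii) for $\alpha$ only slightly above $1$ cannot succumb to a mere first-moment bound since $\E|\V^\alpha|=\infty$ in that range, and extracting a summable annulus probability requires a careful second-moment analysis exploiting the precise form of the pair-correlation. Second, and more seriously, the exponent $-2(1-\sqrt\alpha)^2$ in (iii) is strictly smaller in absolute value than $-(2-\alpha)$ for $\alpha\in(0,1)$, so crude first or second moment estimates cannot produce it; one needs either exponential moment bounds on the number of vacant points in the annulus, or a multi-scale refinement computing $k$-point intensities and optimizing over $k$, analogous to branching-random-walk style arguments.
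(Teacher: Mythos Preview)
Your treatment of part~(i) is correct and matches the paper. For part~(ii) with $\alpha>2$ and the limit $\alpha\to\infty$, you also have the right idea. But there are two genuine gaps.

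\textbf{Part~(ii), the range $\alpha\in(1,2]$.} Your proposed route via $\Var|\V^\alpha\cap A_n|$ and Chebyshev cannot produce an \emph{upper} bound on $\IP[\V^\alpha\cap A_n\neq\emptyset]$. When $\E|\V^\alpha\cap A_n|\to\infty$ (as it does for $\alpha<2$), controlling the variance by the square of the mean goes the wrong way: Paley--Zygmund would give a \emph{lower} bound $\IP[|\V^\alpha\cap A_n|>0]\ge c>0$, and Chebyshev says nothing about the event $\{X\ge 1\}$ in this regime. No combination of first and second moments of $|\V^\alpha\cap A_n|$ can make the non-emptiness probability summably small. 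The paper abandons moments entirely here: it counts \emph{excursions} of the $\s$-walks between $\partial B(r)$ and $\partial B(r\ln r)$, shows their number is concentrated around $\frac{2\alpha\ln^2 r}{\ln\ln r}$ via a compound-Poisson large deviation bound, observes that each excursion hits a fixed site $y\in B(r)\setminus B(r/2)$ with probability $\sim\frac{\ln\ln r}{\ln r}$, and then takes a union bound over~$y$. Optimizing the truncation level~$b$ in the large-deviation estimate yields $\IP[\V^\alpha\cap(B(r)\setminus B(r/2))\neq\emptyset]\le r^{-\frac{\alpha}{2}(1-\alpha^{-1})^2+o(1)}$, which is summable over dyadic annuli precisely when $\alpha>1$. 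For positivity of $\IP[\V^\alpha=\{0\}]$, the paper uses the FKG inequality (the events $\{x\notin\V^\alpha\}$, $x\in B(R)$, and $\{\V^\alpha\subset B(R)\}$ are all decreasing), rather than any comparison to large~$\alpha$.

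\textbf{Part~(iii), the exponent $-2(1-\sqrt\alpha)^2$.} You correctly flag that naive moment methods cannot reach this exponent, but the suggested directions ($k$-point intensities, exponential moments of $|\V^\alpha\cap A_n|$) are not what the paper does. The actual argument is again excursion-based but goes much further: one packs $\asymp r^{2(1-\beta)}$ disjoint sub-balls $B(x_j,r^\beta)$ into the annulus, controls the number of RI($\alpha$) excursions into each via a compound-Poisson tail bound, and then invokes the \emph{soft local times} coupling of Popov--Teixeira to dominate those excursions by a slightly inflated family of \emph{independent} $\s$-excursions. Independence across the~$j$'s then lets one appeal to a known non-covering estimate to conclude that a positive fraction of the sub-balls stay partially uncovered. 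The exponent $2(1-\sqrt\alpha)^2$ emerges from optimizing the level~$b$ in the compound-Poisson Cram\'er bound against the covering probability, and letting $\beta\uparrow 1$. Your Paley--Zygmund plus asymptotic-independence plan for the qualitative statement $|\V^\alpha|=\infty$ is plausible in spirit but would require substantial work to handle the strong dependence between annuli; the paper sidesteps this by deriving the quantitative bound \eqref{eq_emptyball} first and then applying Borel--Cantelli~I directly.
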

It is worth noting that the ``phase transition'' at $\alpha=1$
in~(ii) corresponds to the cover time of the torus, 
as shown in Theorem~\ref{t_conditional} below.

\begin{proof}[Proof of (i) and (ii) (incomplete, in the latter case)]
Part~(i) immediately follows from Theorem~\ref{t_properties_RI}~(ii).

The proof of the part~(ii) is easy in the case $\alpha>2$. 
Indeed, observe first that $\IE|\V^\alpha|<\infty$
implies that $\V^\alpha$ itself is a.s.\ finite.
Also, Theorem~\ref{t_properties_RI}~(ii) actually implies
that $\IE|\V^\alpha \setminus \{0\}|\to 0$ as $\alpha\to\infty$,
so $\IP\big[\V^\alpha=\{0\}\big]\to 1$ by the Chebyshev inequality.

Now, let us prove that, in general, $\IP\big[|\V^\alpha|<\infty\big]=1$
implies that $\IP\big[\V^\alpha=\{0\}\big]>0$.
Indeed, if~$\V^\alpha$ is a.s.\ finite, then one can 
find a sufficiently large~$R$ such that 
$\IP\big[|\V^\alpha\cap (\Z^2\setminus B(R))|=0\big]>0$.
Since $\IP[x\notin \V^\alpha]>0$ for any $x\neq 0$,
the claim that $\IP\big[\V^\alpha=\{0\}\big]>0$ 
follows from the FKG inequality applied to events
$\{x\notin \V^\alpha\}$, $x\in B(R)$ together
with $\big\{|\V^\alpha\cap (\Z^2\setminus B(R))|=0\big\}$.
\end{proof}

As before, we postpone the proof of part~(iii) and 
the rest of part~(ii) of
Theorem~\ref{t_sizevacant}. 
Let us remark that we believe that the right-hand side
of~\eqref{eq_emptyball} gives the correct order of decay 
of the above probability; we, however, do not have a
rigorous argument at the moment.
Also, note that the question
whether $\V^1$ is a.s.\ finite or not, is open. 

Let us now give
a heuristic explanation about the unusual
behaviour of the model for $\alpha \in (1,2)$:
in this non-trivial interval, the vacant set is 
a.s.\ finite but its expected size is infinite.
The reason is the following:
the number of $\s$-walks that hit~$B(r)$ has Poisson law
with rate of order~$\ln r$ (recall~\eqref{capa_ball}).
 Thus, decreasing this number by a 
constant factor (with respect to the expectation) has only
a polynomial cost. On the other hand, by doing so, we increase 
the probability that a site~$x\in B(r)$ is vacant for all $x\in B(r)$
at once, which increases the expected size of $\V^\alpha\cap B(r)$
by a polynomial factor. It turns out that this effect causes
the actual number of uncovered sites in~$B(r)$ to be typically
of much smaller order then the expected number of uncovered sites 
there.

\subsection{Simple random walk on a discrete torus and 
its relationship with random interlacements}
\label{s_rw_interl}
Now, we state our results for random walk on the torus. 
Let~$(X_k, k\geq 0)$ be simple random walk on~$\Z^2_n$
with~$X_0$ chosen uniformly at random. 
 Define the entrance time
to the site $x\in \Z^2_n$ by
\begin{equation}
\label{eq_defTx}
T_n(x) = \inf\{t\geq 0: X_t=x\},
\end{equation}
and the \emph{cover time} of the torus by
\begin{equation}
\label{eq_defT}
\T_n = \max_{x\in \Z^2_n} T_n(x).
\end{equation}
Let us also define the \emph{uncovered set} at time~$t$,
\begin{equation}
\label{df_U_t}
 U_t^{(n)} = \{x\in\Z^2_n : T_n(x) > t\}.
\end{equation}

Denote by $\Upsilon_n:\Z^2 \to \Z^2_n$,
$\Upsilon_n(x,y)=(x \mod n, y\mod n)$, 
the natural projection modulo~$n$. 
Then, if~$S_0$ were chosen uniformly at random 
on any fixed $n\times n$ square,
 we can write $X_k=\Upsilon_n (S_k)$.
Similarly,
$B(y,r)\subset \Z^2_n$ is defined by $B(y,r)=\Upsilon_n B(z,r)$,
where $z\in\Z^2$ is such that $\Upsilon_n z = y$. 
Let also
\[
t_\alpha:=\frac{4\alpha}{\pi}n^2\ln^2 n
\]
(recall that, as mentioned in Section~\ref{s_late},
 $\alpha=1$ corresponds to the leading-order term of the 
expected cover time of the torus).
In the following
theorem, we prove that, given that~$0$ is uncovered, the law
of the uncovered set around~$0$ at time~$t_\alpha$
is close to that of RI($\alpha$):
\begin{theo}
\label{t_conditional} 
Let $\alpha>0$ and~$A$ be a finite subset of~$\Z^2$ 
such that $0\in A$. Then, we have 
\begin{equation}
\label{eq_conditional}
 \lim_{n\to\infty}\IP[\Upsilon_n A \subset 
   U_{t_\alpha}^{(n)} \mid 0\in U_{t_\alpha}^{(n)}]
  = \exp\big(-\pi\alpha\capa(A)\big).
\end{equation}
\end{theo}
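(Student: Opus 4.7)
The idea is to describe $U^{(n)}_{t_\alpha}\cap B(r)$ as the complement of a family of SRW excursions across an annulus, then identify these excursions with $\s$-walk trajectories once one conditions on $\{0\in U^{(n)}_{t_\alpha}\}$, matching the random interlacement construction. Fix $r$ so that $A\subset B(r)$, and pick an intermediate scale $R=R(n)$ with $r\ll R\ll n$ (the exact rate, something like $R=n/(\ln n)^{10}$, is tuned at the end). Since $R\ll n$, the projection $\Upsilon_n$ restricted to $B(R)\subset\Z^2$ is a bijection onto its image, so $(X_t)$ can be unambiguously lifted to a walk on $\Z^2$ as long as it stays in $B(R)$. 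Decompose the trajectory $(X_t)_{0\le t\le t_\alpha}$ into successive excursions from $\partial B(R)$ to $\partial B(r)$ and back: let $D_0=\inf\{t\ge 0:X_t\in\partial B(R)\}$, and for $k\ge 1$ set $J_k=\inf\{t\ge D_{k-1}:X_t\in\partial B(r)\}$ and $D_k=\inf\{t\ge J_k:X_t\in\partial B(R)\}$. Denote $\Gamma_k=(X_t)_{J_k\le t\le D_k}$ and $N_n=\max\{k:D_k\le t_\alpha\}$.

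The proof then proceeds in three steps. (a) Standard torus Green's function estimates give $\E(D_k-D_{k-1})\sim \frac{2}{\pi}n^2\ln(n/R)$, and a mixing/coupling argument yields concentration of $N_n$ around a deterministic $N_n^*$ chosen so that $N_n^*\cdot\frac{\pi\capa(A)}{2\ln R}\to\pi\alpha\capa(A)$ as $n\to\infty$; moreover, consecutive excursions are approximately i.i.d., with the entry point of each $\Gamma_k$ on $\partial B(R)$ asymptotically distributed as $\hm_{B(R)}$ (computed in $\Z^2$ via~\eqref{def_hm}). (b) By the Doob $h$-transform interpretation of $\s$ with $h=a$, the \emph{conditional} law of one SRW excursion $\Gamma_k$ given that it avoids $0$ coincides, up to negligible boundary effects on $\partial B(R)$, with the law of an $\s$-excursion from $\partial B(R)$ to $\partial B(r)$ and back. (c) Combining~\eqref{escape_identity}, \eqref{hm_escape}, Proposition~\ref{p_equalcapa} and~\eqref{capa_ball}, the probability that such an $\s$-excursion meets $A\setminus\{0\}$ equals $\frac{\pi\capa(A)}{2\ln R}(1+o(1))$.

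Putting these pieces together, on the event $\{0\in U^{(n)}_{t_\alpha}\}$ the inclusion $\Upsilon_n A\subset U^{(n)}_{t_\alpha}$ becomes the event that none of the (approximately i.i.d., $\s$-distributed) excursions $\Gamma_1,\dots,\Gamma_{N_n}$ meets $A\setminus\{0\}$; Poisson approximation with mean $N_n^*\cdot\frac{\pi\capa(A)}{2\ln R}\to\pi\alpha\capa(A)$ then yields the limit $e^{-\pi\alpha\capa(A)}$. The main obstacle will be justifying the reduction from the global conditioning on $\{0\in U^{(n)}_{t_\alpha}\}$ (a rare event of probability of order $n^{-\alpha}$, coupling all excursions together) to independent per-excursion conditionings on avoiding~$0$. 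I would handle this via an explicit change of measure that replaces the SRW excursions inside $B(R)$ by $\s$-excursions: the Radon--Nikodym density factorises over excursions, and the global avoidance of~$0$ is automatic under $\hP$. A secondary technical point is to control $N_n$ on this rare event, which is achieved by second-moment estimates carried out under the tilted ($\s$-walk) measure, using the auxiliary facts collected in Section~\ref{s_aux}.
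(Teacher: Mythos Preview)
Your outline captures the right intuition but contains a concrete arithmetical error and, more seriously, dodges the central difficulty that the paper's proof is built around.

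\textbf{The arithmetic does not close.} With $r$ fixed so that $A\subset B(r)$, the probability that an $\s$-excursion started at $x\in\partial B(r)$ hits~$A$ before reaching $\partial B(R)$ is \emph{not} $\frac{\pi\capa(A)}{2\ln R}$. A direct computation (combine Lemmas~\ref{l_exit_balls}, \ref{l_hit_A} and~\ref{l_relation_S_hatS} as in~\eqref{eq:decadix}) gives
\[
\hP_x\big[\htau_1(A)<\htau_1(\partial B(R))\big]
=\capa(A)\Big(\frac{1}{a(x)}-\frac{1}{a(R)}\Big)(1+o(1)),
\]
which for $x\in\partial B(r)$ with $r$ fixed is $\frac{\capa(A)}{a(r)}+o(1)$, a \emph{constant}. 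In fact, your displayed formulas in~(a) and~(c) are mutually inconsistent: if $E(D_k-D_{k-1})\sim\frac{2}{\pi}n^2\ln(n/R)$ then $N_n^*\sim\frac{2\alpha\ln^2 n}{\ln(n/R)}$, and there is no choice of $R\ll n$ making $N_n^*\cdot\frac{\pi\capa(A)}{2\ln R}$ converge to $\pi\alpha\capa(A)$ (set $x=\ln R/\ln n$; you would need $x(1-x)=1$). To make the per-excursion hitting probability vanish you must let the inner radius grow as well --- which is exactly the paper's choice $B\big(\frac{n}{3\ln n}\big)$ versus $B(n/3)$.

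\textbf{The main gap: spatial versus time-dependent $h$-transform.} Replacing the SRW transitions inside $B(R)$ by $\s$-transitions gives a measure under which $0$ is avoided forever, but this is \emph{not} the conditional law given $\{0\in U^{(n)}_{t_\alpha}\}$: the latter is the Doob transform by the \emph{time-dependent} function $h(t,x)=P_x[T_n(0)>t]$, cf.~\eqref{df_h_transf}. The Radon--Nikodym density between your tilted measure and the true conditional measure carries a factor that depends on the random excursion count $N_n$, and this factor is not close to~$1$ across the typical fluctuations of~$N_n$. The paper states this explicitly at the start of Section~\ref{s_proof_torus}: ``$N_\alpha$ \ldots\ is not concentrated enough: the probability that~$0$ is not hit during~$k$ excursions \ldots\ changes too much''. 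Second-moment control of $N_n$ under the tilted law does not repair this, because you still have to transfer the estimate back to the conditional law across a density that fluctuates at the same scale. The paper's remedy is to work directly under the $h(t,x)$-transformed chain~$\tX$ and to prove the nontrivial regularity Lemma~\ref{l_reg_h}, which shows that $h(s,x)/h(s+r,y)=1+O(1/\ln n)$ uniformly for $\|x\|,\|y\|\geq\lambda n$ and $|r|\leq\beta n^2$; this is what allows one to identify the $\tX$-excursions with $\s$-excursions up to $o(1)$, as in~\eqref{exc_tilde_hitsA}. Your sketch contains no analogue of this step.
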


Let us also mention that, for higher dimensions, results
similar to the above theorem have appeared in the literature,
see Theorem~0.1 of~\cite{Szn09-2} and Theorem~1.1 of~\cite{W08}.
Also, stronger ``coupling'' results are available, see
e.g.~\cite{B13,CT15,Szn09-1,TW11}.

The proof of this theorem will be presented in Section~\ref{s_proofs}.
\begin{figure}
\begin{center}
\includegraphics{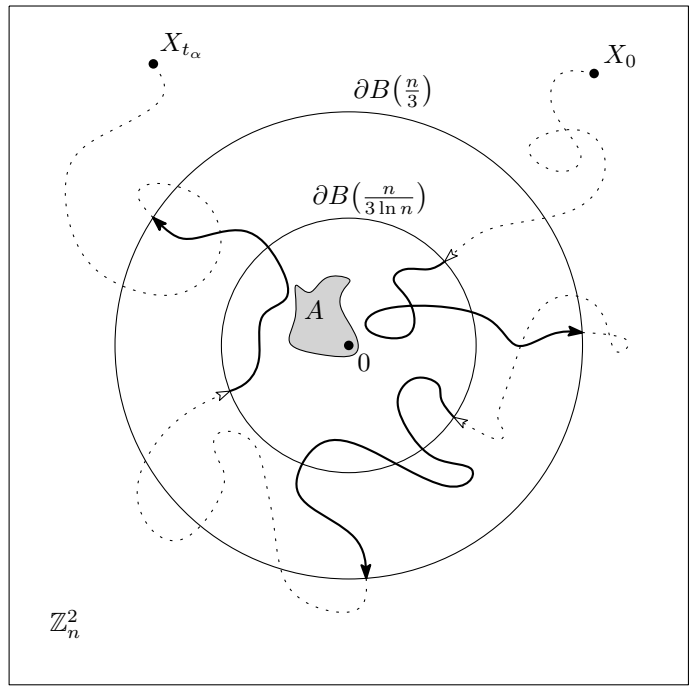}
\caption{Excursions (depicted as the solid
pieces of the trajectory) of the SRW on the torus~$\Z^2_n$}
\label{f_excursions}
\end{center}
\end{figure}
To give a brief heuristic explanation for~\eqref{eq_conditional},
 consider the \emph{excursions} of the random walk~$X$
between $\partial B\big(\frac{n}{3\ln n}\big)$ and $\partial B(n/3)$
up to time~$t_\alpha$.
We hope that Figure~\ref{f_excursions} is self-explanatory;
formal definitions are presented in Section~\ref{s_aux_torus}.
It is possible 
to prove that the number of these excursions
 is concentrated around 
$N=\frac{2\alpha\ln^2 n}{\ln\ln n}$. Also, 
one can show that each excursion hits a finite set~$A$ (such that
$0\in A$)
approximately independently of the others (also when
conditioning on $0\in U_{t_\alpha}^{(n)}$), with 
probability roughly equal to $p=\frac{\pi\ln\ln n}{2\ln^2 n}\capa(A)$.
This heuristically gives~\eqref{eq_conditional} since 
$(1-p)^{N}\approx \exp\big(-\pi\alpha\capa(A)\big)$.

\section{Some auxiliary facts and estimates}
\label{s_aux}
In this section we collect lemmas of all sorts
that will be needed in the sequel. 

\subsection{Simple random walk in~$\Z^2$}
\label{s_aux_SRW}
First, we recall a couple of basic facts for the exit
probabilities of simple random walk.
\begin{lem}
\label{l_exit_balls}
For all $x, y \in \Z^2$ and $R>0$ with $x \in B(y,R), \|y\|\leq R-2$, we have
\begin{equation}
\label{nothit_0}
 P_x\big[\tau_1(0)>\tau_1\big(\partial B(y,R)\big)\big] 
   = \frac{a(x)}{a(R) + O\big(\frac{\|y\|\vee 1}{R}\big)}\;, 
   \end{equation}
 and for all
 $y \in B(r), x \in B(y,R)\setminus B(r)$ with $  r+\|y\| \leq R-2$, we have
\begin{equation}
 \label{nothit_r}
P_x\big[\tau_1(\partial B(r))>\tau_1\big(\partial B(y,R)\big)\big] = 
\frac{a(x)-a(r)+O(r^{-1})}{a(R)-a(r)
+O\big(\frac{\|y\|\vee 1}{R}+r^{-1}\big)}\;,
\end{equation}
as $r,R\to \infty$. 
\end{lem}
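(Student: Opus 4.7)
The plan is to apply optional stopping to the martingale $a(S_{k\wedge \tau_1(0)})$, which is a martingale by \eqref{a_harm} (and which, for $x\neq 0$, coincides with $a(S_k)$ up to time $\tau_1(0)$).

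For \eqref{nothit_0}, set $\tau = \tau_1(0) \wedge \tau_1\big(\partial B(y,R)\big)$. Since the walk is confined to $B(y,R)$ until time $\tau$ and $a$ is bounded there, $\tau$ has finite expectation and $a(S_{k\wedge \tau})$ is bounded, so optional stopping applies and gives
\[
 a(x) = E_x\big[a(S_\tau)\big]
 = P_x\big[S_\tau \in \partial B(y,R)\big]\cdot E_x\big[a(S_\tau)\mid S_\tau \in \partial B(y,R)\big],
\]
since $a(0)=0$ kills the complementary event. The conditional distribution of $S_\tau$ on $\partial B(y,R)$ is a probability measure on that sphere, so by \eqref{real_a} its $a$-average equals $a(R)+O((\|y\|\vee 1)/R)$. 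Solving for the probability yields \eqref{nothit_0}.

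For \eqref{nothit_r}, set $\tau = \tau_1\big(\partial B(r)\big)\wedge \tau_1\big(\partial B(y,R)\big)$. Since $x\notin B(r)$ and $0\in B(r)$, the walk avoids the origin up to time $\tau$, so $a(S_{k\wedge \tau})$ is again a bounded martingale and optional stopping gives
\[
 a(x) = p\cdot E_x\big[a(S_\tau)\mid S_\tau\in \partial B(y,R)\big]
     + (1-p)\cdot E_x\big[a(S_\tau)\mid S_\tau\in \partial B(r)\big],
\]
where $p:=P_x\big[\tau_1(\partial B(y,R))<\tau_1(\partial B(r))\big]$. The first conditional $a$-average is $a(R)+O((\|y\|\vee 1)/R)$ by \eqref{real_a} applied to $\partial B(y,R)$. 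For the second, \eqref{formula_for_a} gives $a(z)=a(r)+O(r^{-1})$ uniformly over $z\in\partial B(r)$, so the second conditional average equals $a(r)+O(r^{-1})$. Rearranging,
\[
 a(x) = a(r)+O(r^{-1}) + p\Big(a(R)-a(r)+O\big(\tfrac{\|y\|\vee 1}{R}+r^{-1}\big)\Big),
\]
and solving for $p$ yields \eqref{nothit_r}.

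There is no genuine obstacle; the routine point to be careful about is that the error terms in the exit-sphere $a$-average come from \eqref{real_a} (with the $\|y\|\vee 1$ factor, not just $1$), while the error on $\partial B(r)$ comes from the pointwise asymptotic \eqref{formula_for_a}. The two estimates are then combined into the quotient in exactly the form stated.
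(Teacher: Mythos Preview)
Your proof is correct and follows essentially the same approach as the paper: both apply the Optional Stopping Theorem to the martingale $a(S_{k\wedge\tau_0(0)})$ and evaluate the stopped values via~\eqref{formula_for_a} and~\eqref{real_a}. You are slightly more explicit about why optional stopping is justified and about which estimate handles which boundary, but the structure and all the key steps coincide.
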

\begin{proof}
 Both \eqref{nothit_0} and \eqref{nothit_r} 
 are easily deduced from the following argument:
recall that the sequence  $a(S_{k\wedge \tau_0(0)}), k \geq 0, $ is a martingale,
and apply the Optional Stopping Theorem 
together with~\eqref{formula_for_a} and~\eqref{real_a}. For the second statement,
with  
\[
\tau= \tau_1(\partial B(r)) \wedge \tau_1\big(\partial B(y,R)\big)
\quad \text{ and } \quad q=P_x\big[\tau_1(\partial B(r))>\tau_1
\big(\partial B(y,R)\big)\big],
\]
 we write 
\begin{align*}
a(x)&= E_x\big[ a(S_{ \tau}); \tau = \tau_1(\partial B(y,R))\big]+ 
E_x\big[ a(S_{ \tau}); \tau = \tau_1(\partial B(r))\big] \\
&= q \big(a(R)+ O\big(\textstyle\frac{\|y\|\vee 1}{R}\big)\big)
 + (1-q) \big(a(r)+O(r^{-1})\big),
\end{align*}
yielding \eqref{nothit_r}. The first statement has a similar proof. 
\end{proof}

We have an estimate for more general sets.
\begin{lem}
\label{l_hit_A}
 Let~$A$ be a finite subset of~$\Z^2$ such that
$A\subset B(r)$ for some $r > 0$. We have for
$r+1\leq\|x\|\leq R-2$, 
$\|x\|+\|y\|\leq R-1$
\begin{equation}
\label{nothit_A}
 P_x\big[\tau_1(A)>\tau_1\big(\partial B(y,R)\big)\big] 
   = \frac{a(x)-\capa(A)+O\big(\frac{r\ln r \ln \|x\|}{\|x\|}\big)}
{a(R)-\capa(A) 
+ O\big(\frac{\|y\|\vee 1}{R}+\frac{r\ln r \ln \|x\|}{\|x\|}\big)}.
\end{equation}
\end{lem}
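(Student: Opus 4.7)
The plan is to extend the optional-stopping argument of Lemma~\ref{l_exit_balls} from balls to arbitrary finite $A\subset B(r)$. Set $\tau = \tau_1(A) \wedge \tau_1\big(\partial B(y,R)\big)$ and $q = P_x\big[\tau_1(\partial B(y,R)) < \tau_1(A)\big]$. For convenience assume $0 \in A$ (otherwise one may adjoin the origin to~$A$: since $a(0)=0$ and, by Lemma~\ref{l_exit_balls}, the walk from $x$ with $\|x\|\geq r+1$ reaches $A$ well before visiting the origin in the relevant regime, neither $\capa(A)$ nor the probability in question is affected beyond the stated error). Applying the Optional Stopping Theorem to the martingale $a(S_{k\wedge\tau_0(0)})$ at the bounded stopping time $\tau$ yields
\[
a(x) = q\cdot E_x\big[a(S_\tau)\mid \tau=\tau_1\big(\partial B(y,R)\big)\big] + (1-q)\cdot E_x\big[a(S_\tau)\mid \tau=\tau_1(A)\big].
\]
By~\eqref{real_a} applied to the conditional exit distribution on $\partial B(y,R)$ (which, being a probability measure, is all that formula requires), the first conditional expectation equals $a(R)+O\big((\|y\|\vee 1)/R\big)$.

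The heart of the argument is to show that
\[
E_x\big[a(S_{\tau_1(A)})\mid \tau=\tau_1(A)\big] = \capa(A) + O\Big(\tfrac{r\ln r\,\ln\|x\|}{\|x\|}\Big).
\]
Since $\capa(A)=\sum_{z\in A}a(z)\,\hm_A(z)$ by~\eqref{df_cap2}, this is a statement that, conditional on hitting $A$ first, the entrance law of $S$ on $A$ agrees with $\hm_A$ when tested against $a(\cdot)$, up to the stated error. I would prove it by comparing the joint probability $P_x\big[S_{\tau_1(A)}=z,\,\tau_1(A)<\tau_1(\partial B(y,R))\big]$ with its ``infinite volume'' counterpart, using the escape-probability representation~\eqref{hm_escape} of $\hm_A$ together with the strong Markov property at an intermediate sphere (one lying well inside $B(y,R)$ and well outside $B(r)$, e.g.\ of radius $\sim\sqrt{r\|x\|}$ or $\sim\|x\|/2$). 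Lemma~\ref{l_exit_balls} and the expansion~\eqref{formula_for_a} then control the relative discrepancy at order $r\ln r/\|x\|$; multiplying by the maximal value of $a$ on the relevant lattice sites, which is $O(\ln\|x\|)$, produces the stated error.

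Substituting the two estimates into the identity above and rearranging,
\[
a(x)-\capa(A) = q\big(a(R)-\capa(A)\big) + O\Big(\tfrac{\|y\|\vee 1}{R}+\tfrac{r\ln r\,\ln\|x\|}{\|x\|}\Big),
\]
which, after distributing the error terms consistently between numerator and denominator, is exactly~\eqref{nothit_A}. The main obstacle is the quantitative comparison with harmonic measure: pointwise convergence of the entrance distribution to $\hm_A$ as $\|x\|\to\infty$ is classical, but obtaining the \emph{rate} $O(r\ln r\,\ln\|x\|/\|x\|)$ uniformly in $A\subset B(r)$ and in $x$ with $\|x\|\geq r+1$ requires carefully tracking the logarithmic factors inherent to the two-dimensional potential kernel through an excursion-based argument.
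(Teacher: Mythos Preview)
Your outline is essentially the paper's argument: optional stopping for $a(S_{k\wedge\tau_0(0)})$ combined with a quantitative comparison of the (conditioned) entrance law on~$A$ with~$\hm_A$. The paper obtains the latter more directly, citing Lawler--Limic for the unconditioned estimate $P_x[S_{\tau_1(A)}=u]=\hm_A(u)\big(1+O(r\ln\|x\|/\|x\|)\big)$ and then checking, via a short last-exit style decomposition, that the same relative error survives the conditioning on $\{\tau_1(A)<\tau_1(\partial B(y,R))\}$; the remaining factor $\ln r$ in~\eqref{nothit_A} then enters through $\capa(A)\le\capa\big(B(r)\big)=O(\ln r)$. Your allocation of the two logarithms is swapped---you claim an entrance-law error $O(r\ln r/\|x\|)$ and then multiply by ``$\sup a=O(\ln\|x\|)$'', but $z\in A\subset B(r)$ gives $a(z)=O(\ln r)$, not $O(\ln\|x\|)$---so although the product happens to match, the intermediate-sphere route you sketch would need to be worked out carefully to see which rate it actually yields. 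Also, the reduction to $0\in A$ is not as harmless as you suggest: adjoining the origin to a set sitting near $\partial B(r)$ genuinely changes both $\capa(A)$ and the hitting probability, so it is cleaner (as the paper implicitly does) to simply run the argument with $0\in A$ from the start.
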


\begin{proof}
First, observe that, for $u \in A$, 
\begin{equation}
\label{first_observe}
 P_x\big[ S_{\tau_1(A)}=u \big]
 = \hm_A(u)\big(1+O\big(\textstyle\frac{r\ln  \|x\|}{\|x\|}\big)\big).
\end{equation}
Indeed, from~(6.42) and the last display on page~178 of~\cite{LL10}
we obtain
\[
 \frac{P_x\big[ S_{\tau_1(A)}=u \big]}{P_{x'}\big[ S_{\tau_1(A)}=u \big]}
= 1+O\big(\textstyle\frac{r\ln  \|x\|}{\|x\|}\big)
\]
for all $x'\in \partial B(\|x\|)$. Since~$\hm_A(u)$ is a linear combination
of $P_{x'}\big[ S_{\tau_1(A)}=u \big]$, $x'\in \partial B(\|x\|)$,
\eqref{first_observe} follows.

Then, we can write
\begin{align*}
\lefteqn{P_x\big[ S_{\tau_1(A)}= u , \tau_1(A) <
 \tau_1\big(\partial B(y,R)\big)\big]}\\
 &= P_x\big[ S_{\tau_1(A)} = u \big] - P_x\big[ S_{\tau_1(A)} = u ,
 \tau_1(A)>\tau_1\big(\partial B(y,R)\big)\big]\\
& = \hm_A(u)\big(1+O\big(\textstyle\frac{r\ln \|x\|}{\|x\|}\big)\big) - P_x\big[\tau_1(A) > \tau_1\big(\partial B(y,R)\big)\big] \\
&\qquad  
 \times \sum_{z\in \partial B(y,R)}P_z\big[ S_{\tau_1(A)} 
= u \big]P_x\big[ S_{\tau_1(\partial B(y,R))} = z \mid \tau_1(A) 
 > \tau_1\big(\partial B(y,R)\big)\big] \\
&= \hm_A(u)\big(1+O\big(\textstyle\frac{r\ln \|x\|}{\|x\|}\big)\big) - P_x\big[\tau_1(A) > \tau_1\big(\partial B(y,R)\big)\big]
\hm_A(u)\big(1+O\big(\textstyle\frac{r\ln R}{R}\big)\big) \\
&= \hm_A(u)P_x\big[\tau_1(A) < \tau_1\big(\partial B(y,R)\big)\big] \big(1+O\big(\textstyle\frac{r\ln \|x\|}{\|x\|}\big)\big),
\end{align*}
using twice the above observation and also that 
$B(\|x\|)\subset B(y,R)$, yielding 
\begin{equation}
\label{enter_prob}
P_x\big[ S_{\tau_1(A)}=u \mid \tau_1(A)<\tau_1\big(\partial B(y,R)\big)\big]
 = \hm_A(u)\big(1+O\big(\textstyle\frac{r\ln \|x\|}{\|x\|}\big)\big).
\end{equation}
 Abbreviating $q:=P_x\big[\tau_1(A)>\tau_1\big(\partial B(y,R)\big)\big]$,
we have by the Optional Stopping Theorem and~\eqref{enter_prob}
\begin{align*}
a(x) &= q\big(a(R) + O\big(\textstyle\frac{\|y\|\vee 1}{R}\big)\big)
 + (1-q)\sum_{u\in A\setminus\{0\}}
a(u)\hm_A(u) \big(1+O\big(\textstyle\frac{r\ln \|x\|}{\|x\|}\big)\big)\\
 &= q\big(a(R) + O\big(\textstyle\frac{\|y\|\vee 1}{R}\big)\big)
 + (1-q)\big(1+O\big(\textstyle\frac{r\ln \|x\|}
{\|x\|}\big)\big)\capa(A),
\end{align*}
and~\eqref{nothit_A} follows (observe also that 
$\capa(A)\leq \capa(B(r))=\frac{2}{\pi}\ln r + O(1)$, see~\eqref{capa_ball}).
\end{proof}

\subsection{Simple random walk conditioned on not 
hitting the origin}
\label{s_aux_hat_s}
Next, we relate the probabilities of certain events
for the walks~$S$ and~$\s$. For $M \subset \Z^2$, let~$\Gamma^{(x)}_M$ be the set of all 
nearest-neighbour finite
trajectories that start at~$x\in M\setminus\{0\}$ 
and end when entering~$\partial M$ for the first time;
 denote also~$\Gamma^{(x)}_{y,R}=\Gamma^{(x)}_{B(y,R)}$.
 For~$A\subset \Gamma^{(x)}_M$ write
 $S\in A$ if there exists~$k$ such that 
$(S_0,\ldots,S_k)\in A$ (and the same for
the conditional walk~$\s$). In the next
result we show that $P_x\big[\;\cdot\mid\tau_1(0)>
\tau_1\big(\partial B(R)\big)\big]$ 
and $\hP_x[\,\cdot\,]$ are 
almost indistinguishable on~$\Gamma^{(x)}_{0,R}$ (that is, the conditional
law of~$S$ almost coincides with the unconditional law 
of~$\s$). A similar result holds for excursions on a 
``distant'' (from the origin) set.
\begin{lem}
\label{l_relation_S_hatS}
\begin{itemize}
 \item[(i)] Assume $A\subset \Gamma^{(x)}_{0,R}$.
We have 
\begin{equation}
\label{eq_relation_S_hatS}
P_x\big[S\in A\mid \tau_1(0)>\tau_1\big(\partial B(R)\big)\big]
 =\hP_x\big[\s \in A\big] \big(1+O((R \ln R)^{-1})\big).  
\end{equation}
\item[(ii)] Assume that $A\subset \Gamma^{(x)}_M$
and suppose that $0\notin M$, and denote $s=\dist(0,M)$,
$r=\diam(M)$. Then, for $x \in M$,
\begin{equation}
\label{eq_relation_S_hatS2}
P_x[S\in A]
 =\hP_x[\s \in A]\Big(1+O\Big(\frac{r}{s\ln s}\Big)\Big).
\end{equation}
\end{itemize}
\end{lem}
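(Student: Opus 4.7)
The proof will be a direct calculation exploiting the telescoping structure of the $h$-transform. For any nearest-neighbour trajectory $\vr = (x_0, \ldots, x_k)$ with $x_0 = x$, $x_i \ne 0$ for all $i$, the product of the transition probabilities $\frac{a(x_{i+1})}{4 a(x_i)}$ telescopes to give
\[
 \hP_x[\s \text{ follows } \vr] = \frac{a(x_k)}{a(x)} \cdot 4^{-k}, \qquad P_x[S \text{ follows } \vr] = 4^{-k}.
\]
Since both $\s$ under $\hP_x$ and $S$ under $P_x[\,\cdot \mid \tau_1(0) > \tau_1(\partial B(R))]$ almost surely avoid the origin, it suffices in both parts to restrict attention to trajectories in the relevant $\Gamma^{(x)}_M$ that never visit $0$. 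The ratio $P_x[S \text{ follows } \vr] / \hP_x[\s \text{ follows } \vr]$ then reduces to $a(x)/a(x_k)$ times (in case (i)) a single normalizing factor that does not depend on the trajectory. Summing over $\vr \in A$ propagates any uniform-in-$\vr$ error bound directly from trajectory-level ratios to event-level ratios, which is the only mechanism I will need.

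\textbf{Part (i).} Every $\vr \in \Gamma^{(x)}_{0,R}$ ends at some $y \in \partial B(R)$, and \eqref{formula_for_a} gives $a(y) = a(R) + O(R^{-2})$ uniformly. The conditioning factor is computed by \eqref{nothit_0} (with $y=0$ there): $P_x[\tau_1(0) > \tau_1(\partial B(R))] = a(x) / \bigl( a(R) + O(R^{-1}) \bigr)$. Combining,
\[
 \frac{P_x[S \text{ follows } \vr \mid \tau_1(0) > \tau_1(\partial B(R))]}{\hP_x[\s \text{ follows } \vr]}
 = \frac{a(R) + O(R^{-1})}{a(y)}
 = 1 + O\!\left(\tfrac{1}{R \ln R}\right),
\]
uniformly in $\vr$. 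Summing over $\vr \in A$ (restricted to those avoiding $0$) yields \eqref{eq_relation_S_hatS}.

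\textbf{Part (ii).} Here $M$ lies at distance $s$ from the origin with diameter $r$, so any $\vr \in \Gamma^{(x)}_M$ has endpoints $x, x_k \in M \cup \partial M$ with $\|x\|, \|x_k\| \in [s, s + O(r)]$ and $\bigl| \|x\| - \|x_k\| \bigr| \le r + O(1)$. Plugging into \eqref{formula_for_a},
\[
 a(x) - a(x_k) = \tfrac{2}{\pi} \ln\!\tfrac{\|x\|}{\|x_k\|} + O(s^{-2}) = O(r/s),
\]
and since $a(x_k) = \tfrac{2}{\pi} \ln s + O(1)$, the trajectory-wise ratio is
\[
 \frac{P_x[S \text{ follows } \vr]}{\hP_x[\s \text{ follows } \vr]} = \frac{a(x)}{a(x_k)} = 1 + O\!\left(\frac{r}{s \ln s}\right),
\]
uniformly in $\vr$. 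Summation over $\vr \in A$ gives \eqref{eq_relation_S_hatS2}.

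\textbf{Main obstacle.} Conceptually there is no real obstacle here; the Doob transform identity does all the work. The only point that requires a little attention is the uniform control of $a(x_k)$: in part (i) we need $a(y)$ to agree with $a(R)$ up to an error small compared to $1/R$, which is the reason the final bound is $(R \ln R)^{-1}$ rather than $R^{-1}$; and in part (ii) we must verify that both endpoints lie in an annulus of inner radius of order $s$ and width of order $r$, so that $\ln(\|x\|/\|x_k\|) = O(r/s)$. Both are routine given \eqref{formula_for_a} and \eqref{nothit_0}.
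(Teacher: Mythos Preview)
Your proof is correct and follows essentially the same approach as the paper's: write both probabilities as sums of $(1/4)^{|\vr|}$ over the relevant paths, use the telescoping factor $a(\vr_{\text{end}})/a(x)$ for~$\s$, and control $a(\vr_{\text{end}})$ uniformly over~$\partial B(R)$ (respectively over~$M$). One tiny slip: for $y\in\partial B(R)$ you only get $a(y)=a(R)+O(R^{-1})$, not $O(R^{-2})$, since $\|y\|$ can differ from~$R$ by~$O(1)$ and the term $\tfrac{2}{\pi}\ln(\|y\|/R)$ dominates; this is harmless, as the conditioning factor from~\eqref{nothit_0} already carries an~$O(R^{-1})$ and the ratio still comes out $1+O((R\ln R)^{-1})$.
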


\begin{proof}
Let us prove part~(i).
Assume without loss of generality that no trajectory from~$A$
passes through the origin.
 Then, it holds that 
\[
 \hP_x[\s \in A] = \sum_{\vr\in A}
  \frac{a(\vr_{\text{end}})}{a(x)}\Big(\frac{1}{4}\Big)^{|\vr|},
\]
with $|\vr|$ the length of $\vr$.
On the other hand, by~\eqref{nothit_0}
\[
P_x[S\in A\mid  \tau_1(0)>\tau_1\big(\partial B(R)\big)]
 = \frac{a(R) + O(R^{-1})}{a(x)}
  \sum_{\vr\in A}
  \Big(\frac{1}{4}\Big)^{|\vr|}.
\]
Since
$\vr_{\text{end}}\in\partial B(R)$, we have 
$a(\vr_{\text{end}})=a(R) + O(R^{-1})$,
and so~\eqref{eq_relation_S_hatS} follows.

The proof of part~(ii) is analogous (observe that 
$a(y_1)/a(y_2)=1+O\big(\frac{r}{s\ln s}\big)$ for any $y_1, y_2\in M$).
\end{proof}

As observed in Section~\ref{s_RI}, the random walk~$\s$ 
is transient.
Next,
we estimate the 
probability that the $\s$-walk avoids a ball centered
at the origin:
\begin{lem}
\label{l_escape_hatS}
Assume $r \geq 1$ and $\|x\| \geq r+1$. We have
\[
\hP_x\big[\htau_1\big(B(r)\big)=\infty\big] = 
1-\frac{a(r)+O(r^{-1})}{a(x)}.
\]
\end{lem}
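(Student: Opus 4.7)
The plan is to express the escape probability as a monotone limit, $\hP_x\big[\htau_1(B(r))=\8\big] = \lim_{R\to\8}\hP_x\big[\htau_1(B(r)) > \htau_1(\partial B(R))\big]$ (valid by transience of $\s$), and then reduce the finite-$R$ probability on the right-hand side to two already computed SRW escape probabilities via the Doob $h$-transform relation.

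First I would fix $R$ much larger than both $r$ and $\|x\|$, and observe that any trajectory from $x$ that exits $B(R)$ before entering $B(r)$ necessarily avoids the origin, so the corresponding set of trajectories is a subset of $\Gamma^{(x)}_{0,R}$. Lemma~\ref{l_relation_S_hatS}(i) then converts the $\s$-probability to
\[
P_x\big[\tau_1(B(r))>\tau_1(\partial B(R))\mid \tau_1(0)>\tau_1(\partial B(R))\big]\big(1+O((R\ln R)^{-1})\big).
\]
Next, because $0\in B(r)$, the event $\{\tau_1(B(r))>\tau_1(\partial B(R))\}$ is contained in the conditioning event $\{\tau_1(0)>\tau_1(\partial B(R))\}$, so the conditional probability is simply the ratio of the two unconditional probabilities. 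Both of these are furnished by Lemma~\ref{l_exit_balls} (with $y=0$): the numerator by~\eqref{nothit_r} equals $\big(a(x)-a(r)+O(r^{-1})\big)/\big(a(R)-a(r)+O(R^{-1}+r^{-1})\big)$, and the denominator by~\eqref{nothit_0} equals $a(x)/\big(a(R)+O(R^{-1})\big)$.

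Finally, I would send $R\to\8$. Since $a(R)\to\8$, the ratio $\big(a(R)+O(R^{-1})\big)/\big(a(R)-a(r)+O(R^{-1}+r^{-1})\big)$ tends to $1$, and the multiplicative error $1+O((R\ln R)^{-1})$ disappears, leaving
\[
\hP_x\big[\htau_1(B(r))=\8\big] = \frac{a(x)-a(r)+O(r^{-1})}{a(x)} = 1 - \frac{a(r)+O(r^{-1})}{a(x)}.
\]
There is no real obstacle here --- the argument is essentially bookkeeping. The only point needing mild care is that the $R$-dependent errors and the prefactor from Lemma~\ref{l_relation_S_hatS}(i) combine cleanly in the $R\to\8$ limit without spoiling the $O(r^{-1})$ term, which is clear since those errors vanish uniformly in $r$.
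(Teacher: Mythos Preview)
Your proof is correct and follows essentially the same route as the paper: express the escape probability via Lemma~\ref{l_relation_S_hatS}(i) as the $R\to\infty$ limit of $P_x\big[\tau_1(\partial B(r))>\tau_1(\partial B(R))\mid \tau_1(0)>\tau_1(\partial B(R))\big]$, then evaluate this ratio using~\eqref{nothit_0} and~\eqref{nothit_r}. You have simply written out the bookkeeping that the paper leaves implicit.
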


\begin{proof}
By Lemma~\ref{l_relation_S_hatS}~(i) we have
\[
 \hP_x\big[\htau_1\big(B(r)\big)=\infty\big] = 
  \lim_{R\to \infty}
  P_x\big[\tau_1(\partial B(r))>\tau_1\big(\partial B(R)\big)
   \mid \tau_1(0)>\tau_1\big(\partial B(R)\big)\big].
\]
The claim then follows from~\eqref{nothit_0}--\eqref{nothit_r}.
\end{proof}

\begin{rem}
Alternatively, one can deduce the proof 
of Lemma~\ref{l_escape_hatS} from the fact 
that $1/a(\s_{k\wedge \htau_0(\mathcal{N})})$ is a martingale,
together with the Optional Stopping Theorem.
\end{rem} 

We will need also an expression for the probability
of avoiding \emph{any} finite set containing the origin:
\begin{lem}
\label{l_avoid_A}
 Assume that $0\in A \subset B(r)$, and $\|x\|\geq r+1$.
Then
\begin{equation}
\label{eq_avoid_A}
\hP_x[\htau_1(A)=\infty] = 1-\frac{\capa(A)}{a(x)}
+O\Big(\frac{r\ln r \ln\|x\|}{\|x\|}\Big).
\end{equation}
\end{lem}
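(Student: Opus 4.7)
The plan is to mirror the proof of Lemma~\ref{l_escape_hatS}, transferring the question to simple random walk via Lemma~\ref{l_relation_S_hatS}(i), then applying Lemma~\ref{l_hit_A} for the numerator and Lemma~\ref{l_exit_balls} for the denominator of the resulting ratio.

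First, by transience of $\s$ (established in Section~\ref{s_RI}), the event $\{\htau_1(A) = \infty\}$ is the decreasing limit of $\{\htau_1(A) > \htau_1(\partial B(R))\}$, so
\[
\hP_x[\htau_1(A)=\infty] = \lim_{R\to\infty}\hP_x\bigl[\htau_1(A) > \htau_1(\partial B(R))\bigr].
\]
For $R$ large enough that $A \cup \{x\} \subset B(R-2)$, the event $\{\tau_1(A) > \tau_1(\partial B(R))\}$ belongs to $\Gamma^{(x)}_{0,R}$, so Lemma~\ref{l_relation_S_hatS}(i) gives
\[
\hP_x\bigl[\htau_1(A) > \htau_1(\partial B(R))\bigr] = \bigl(1+O((R\ln R)^{-1})\bigr)\, P_x\bigl[\tau_1(A) > \tau_1(\partial B(R)) \bigm| \tau_1(0) > \tau_1(\partial B(R))\bigr].
\]

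Next, because $0 \in A$, the event $\{\tau_1(A) > \tau_1(\partial B(R))\}$ is contained in $\{\tau_1(0) > \tau_1(\partial B(R))\}$, and the conditional probability is the unconditional ratio. I apply Lemma~\ref{l_hit_A} (with $y=0$) to the numerator and Lemma~\ref{l_exit_balls} (with $y=0$) to the denominator, obtaining
\[
\frac{P_x[\tau_1(A) > \tau_1(\partial B(R))]}{P_x[\tau_1(0) > \tau_1(\partial B(R))]}
= \frac{a(x) - \capa(A) + O\bigl(\tfrac{r\ln r \ln\|x\|}{\|x\|}\bigr)}{a(x)} \cdot \frac{a(R)+O(R^{-1})}{a(R) - \capa(A) + O\bigl(R^{-1} + \tfrac{r\ln r \ln\|x\|}{\|x\|}\bigr)}.
\]
Sending $R\to\infty$, the second factor tends to $1$ since $a(R) \to \infty$ while $\capa(A)$ and the $\|x\|$-dependent error are fixed. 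The first factor equals $1 - \capa(A)/a(x) + O\bigl(\tfrac{r\ln r\ln\|x\|}{a(x)\|x\|}\bigr)$, and because $a(x) \geq a(1) > 0$ uniformly for $\|x\|\geq 2$ (which is implied by $\|x\|\geq r+1\geq 2$), this error is absorbed into $O\bigl(\tfrac{r\ln r\ln\|x\|}{\|x\|}\bigr)$. The $(1+O((R\ln R)^{-1}))$ prefactor disappears in the limit.

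I do not foresee any serious obstacle: the argument is a bookkeeping exercise assembling the previous lemmas. The only care required is to verify that the $R$-dependent error terms in Lemma~\ref{l_hit_A} and Lemma~\ref{l_exit_balls} are genuinely $o(1)$ as $R\to\infty$ with $\|x\|$ and $A$ held fixed, so that interchanging the limit with the ratio is valid — this is immediate from the explicit forms of those error terms.
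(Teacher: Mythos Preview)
Your proof is correct and follows essentially the same approach as the paper: transfer to simple random walk via Lemma~\ref{l_relation_S_hatS}(i), write the conditional probability as a ratio using $0\in A$, then apply Lemma~\ref{l_hit_A} and~\eqref{nothit_0} and send $R\to\infty$. The paper's argument is just a terser version of yours.
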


\begin{proof}
 Indeed, using Lemmas~\ref{l_hit_A} and~\ref{l_relation_S_hatS}~(i)
together with~\eqref{nothit_0},
we write
\begin{align*}
 \hP_x[\htau_1(A)=\infty] &= \lim_{R\to\infty}
 P_x\big[\tau_1(A)>\tau_1\big(\partial B(R)\big)
\mid \tau_1(0)>\tau_1\big(\partial B(R)\big)\big] \\
&=\lim_{R\to\infty} \frac{a(R)+O(R^{-1})}{a(x)}
\times \frac{a(x)-\capa(A)+O\big(\frac{r\ln r \ln \|x\|}{\|x\|}\big)}
{a(R)-\capa(A) + O\big(R^{-1}+\frac{r\ln r \ln \|x\|}{\|x\|}\big)},
\end{align*}
thus obtaining~\eqref{eq_avoid_A}.
\end{proof}

It is also possible to obtain 
exact expressions for one-site escape probabilities,
and probabilities of (not) hitting a given site:
\begin{align}
 \hP_x[\htau_1(y)<\infty] &= \frac{a(x)+a(y)-a(x-y)}{2a(x)},
\label{escape_from_site1}\\
\intertext{for $x\neq y$, $x,y\neq 0$ and }
 \hP_x[\htau_1(x)<\infty] &= 1-\frac{1}{2a(x)}
\label{escape_from_site2}
\end{align}
for $x\neq 0$. We temporarily postpone the proof of
 \eqref{escape_from_site1}--\eqref{escape_from_site2}. 
Observe that, in particular, 
we recover from~\eqref{escape_from_site2} the transience of~$\s$.
Also, observe that~\eqref{escape_from_site1} implies 
the following surprising fact:
 for any~$x\neq 0$, 
\[
 \lim_{y\to\infty} \hP_x[\htau_1(y)<\infty] = \frac{1}{2}.
\]
The above relation leads to the following heuristic explanation for
Theorem~\ref{t_properties_RI}~(iii)
(in the case when~$A$ is fixed and $\|x\|\to\8$). Since the probability
of hitting a distant site is about~$1/2$, by conditioning 
that this distant site is vacant, we essentially throw away
three quarters of the trajectories that pass through 
a neighbourhood of the origin: indeed, the double-infinite
trajectory has to avoid this distant site two times, 
before and after reaching that neighbourhood.

Let us state several other
general estimates, for the probability
of (not) hitting a given set (which is, typically,
far away from the origin), 
or, more specifically, a disk:
\begin{lem}
\label{l_escape_from_ball}
Assume that $x\notin B(y,r)$ and $\|y\|>2r\geq 1$. 
Abbreviate also $\Psi_1=\|y\|^{-1}r$,
$\Psi_2=\frac{r\ln r\ln\|y\|}{\|y\|}$,
$\Psi_3=r\ln r\big(\frac{\ln\|x-y\|}{\|x-y\|}
+\frac{\ln\|y\|}{\|y\|}\big)$.
\begin{itemize}
\item[(i)] We have 
\begin{equation}
\label{eq_escape_from_ball}
 \hP_x\big[\htau_1(B(y,r))<\infty\big]
= \frac{\big(a(y)+O(\Psi_1)\big)\big(a(y)+a(x)-a(x-y)
+O(r^{-1})\big)}{a(x)\big(2a(y)-a(r)+O(r^{-1}+\Psi_1)\big)}.
\end{equation}
\item[(ii)] Consider now any nonempty set $A\subset B(y,r)$.
Then, it holds that
\begin{equation}
\label{eq_escape_from_anyset}
 \hP_x\big[\htau_1(A)<\infty\big]
= \frac{\big(a(y)+O(\Psi_1)\big)\big(a(y)+a(x)-a(x-y)
+O(r^{-1}+\Psi_3
)\big)}
{a(x)\big(2a(y)-\capa(A)+ O(\Psi_2)
\big)}.
\end{equation}
\end{itemize}
\end{lem}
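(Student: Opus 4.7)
The plan is to combine the last-exit decomposition for the transient chain~$\s$ with an explicit formula for its Green's function, and then to derive a key capacity identity via Optional Stopping applied to two potential-kernel martingales.

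First I would use transience of~$\s$ to write, for any finite $A$ with $0\notin A$ and $x\notin A$,
\begin{equation*}
\hP_x[\htau_1(A)<\infty] \;=\; \sum_{z\in A} G^\s(x,z)\,\hP_z[\htau_1(A)=\infty],
\end{equation*}
where $G^\s(x,z) = \hE_x\sum_{k\geq 0}\1{\s_k=z}$. The $h$-transform with $h=a$ gives $G^\s(x,z) = \frac{a(z)}{a(x)}G_0(x,z)$, where $G_0$ is the Green's function of SRW killed upon hitting~$0$; a direct verification of the Poisson equation (using harmonicity of $a$ off the origin and $a(e_i)=1$ at the four neighbours of $0$) shows $G_0(x,z)=a(x)+a(z)-a(x-z)$ for $x,z\neq 0$. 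The specialisations $G^\s(z,z)=2a(z)$ and $A=\{y\}$ already yield \eqref{escape_from_site1}--\eqref{escape_from_site2} as corollaries. For $z\in A\subset B(y,r)$, smoothness of~$a$ gives $a(z)=a(y)+O(\Psi_1)$ and pointwise $a(x-z)=a(x-y)+O(r\ln r/\|x-y\|)$, accounting for the $O(\Psi_3)$ numerator error in part~(ii); for part~(i), applying~\eqref{real_a} to average over $z\in\partial B(y,r)$ replaces the pointwise bound by the cleaner $O(r^{-1})$.

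Next I would convert the remaining sum into a capacity: using $\mu_z=a^2(z)=a^2(y)(1+O)$ in the definition of the equilibrium measure, $\sum_{z\in A}\hP_z[\htau_1(A)=\infty] = \hcapa(A)/a^2(y)\cdot(1+O)$, and by Proposition~\ref{p_equalcapa}, $\hcapa(A)=\capa(A\cup\{0\})$. The remaining identity
\begin{equation*}
\capa(A\cup\{0\}) \;=\; \frac{a^2(y)}{2a(y)-\capa(A)}\,(1+O)
\end{equation*}
is the crux of the argument. I would derive it by two OST computations for SRW starting at~$w$ with $\|w\|\to\infty$, applied to the martingales $a(S_{\cdot\wedge\tau})$ and $a(S_{\cdot\wedge\tau}-y)$ with $\tau=\tau_1(A\cup\{0\})\wedge\tau_1(\partial B(R))$ and $R\to\infty$, using Lemma~\ref{l_hit_A} to evaluate the escape term at $\partial B(R)$. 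These give respectively $\capa(A\cup\{0\}) \approx a(y)\hm(A)$ and $\capa(A\cup\{0\}) \approx a(y)(1-\hm(A))+\capa(A)\hm(A)$, where $\hm(A)=\hm_{A\cup\{0\}}(A)$; the second equation relies on the translation-invariance $\capa(A)=\sum_{z\in A}a(z-y)\hm_A(z)$ together with the approximate decoupling $\hm_{A\cup\{0\}}(z)\approx\hm(A)\,\hm_A(z)$ for $z\in A$. Solving yields $\hm(A)=\frac{a(y)}{2a(y)-\capa(A)}$, hence the identity; for part~(i), $\capa(B(y,r))=a(r)+O(r^{-1})$ from~\eqref{capa_ball} produces the stated denominator.

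The main obstacle will be uniform error control when $x$ is close to $B(y,r)$ (so $\|x-y\|\sim r$), where $a(x-z)$ varies substantially over $z\in A$ and one must carefully blend pointwise smoothness of~$a$ with~\eqref{real_a}-style averaging over $\partial B(y,r)$ to secure the clean $O(r^{-1})$ numerator error in part~(i). A secondary technical point is justifying the decoupling $\hm_{A\cup\{0\}}(z)\approx\hm(A)\hm_A(z)$ used in the second OST, which can be handled by a separate excursion-comparison argument that exploits the transience of SRW-excursions between $A$ and scales $\gg\|y\|$ to bound the perturbation caused by inserting the extra singleton~$\{0\}$.
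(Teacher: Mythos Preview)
Your approach via last-exit decomposition and the explicit Green's function $G^{\s}(x,z)=\frac{a(z)}{a(x)}[a(x)+a(z)-a(x-z)]$ is genuinely different from the paper's. The paper instead runs a finite-$R$ first-passage competition between hitting~$0$, hitting~$B(y,r)$ (or~$A$), and exiting~$B(R)$, solves the resulting $2\times 2$ linear system for the crossed hitting probabilities, inserts Lemma~\ref{l_exit_balls} (resp.\ Lemma~\ref{l_hit_A} for part~(ii)), and sends $R\to\infty$ via Lemma~\ref{l_relation_S_hatS}. That route needs no Green's-function calculus, obtains uniform error control for all $x\notin B(y,r)$ directly from~\eqref{nothit_r}, and derives Lemma~\ref{l_cap_distantball} \emph{a posteriori} from~\eqref{expr_Cap} rather than needing it as an input. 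Your route is more structural and delivers~\eqref{escape_from_site1}--\eqref{escape_from_site2} as immediate by-products.

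There is, however, a genuine gap in your handling of the numerator for part~(i). You invoke~\eqref{real_a} to average $a(x-z)$ over $z\in\partial B(y,r)$, but~\eqref{real_a} is informative only when the averaging sphere contains the origin; here the effective centre is $x-y$ with $\|x-y\|>r$, so~\eqref{real_a} is vacuous. The correct substitute is the mean-value property of~$a$ (harmonic off~$0$), which gives $a(x-y)=\sum_{z}\pi_0(z)\,a(x-z)$ exactly, but only for the SRW exit-from-$y$ distribution~$\pi_0$, not for the last-exit weights $a(z)\hP_z[\htau_1(B(y,r))=\infty]$ that appear in your sum. Securing the clean $O(r^{-1})$ therefore requires showing that these weights agree with~$\pi_0$ up to $O(r^{-1})$, which in turn needs your decoupling $\hm_{A\cup\{0\}}(z)\approx(1-H_0)\hm_A(z)$ together with near-uniformity of~$\hm_{B(0,r)}$. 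This is more than a secondary technicality: note that your ``second OST'' for $a(S_\cdot-y)$, absent the decoupling, collapses to the translation identity $\capa(A\cup\{0\})=a(y)H_0+\sum_{z\in A}a(z-y)H_z$ and yields no new equation, so the decoupling estimate is in fact the linchpin of your capacity computation as well. The paper's exit-system argument sidesteps all of this.
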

Observe that~\eqref{eq_escape_from_ball} is \emph{not}
a particular case of~\eqref{eq_escape_from_anyset};
this is because~\eqref{nothit_r}
typically provides a more precise estimate than~\eqref{nothit_A}.
\begin{proof}
 Fix a (large) $R>0$, such that $R>\max\{\|x\|,\|y\|+r\}+1$. Denote 
\begin{align*}
 h_1 &= P_x\big[\tau_1(0)<\tau_1\big(\partial B(R)\big)\big],\\
 h_2 &= P_x\big[\tau_1\big(B(y,r)\big)<\tau_1\big(\partial B(R)\big)\big],\\
 p_1 &= P_x\big[\tau_1(0)<\tau_1\big(\partial B(R)\big)\wedge
 \tau_1(B(y,r))\big],\\
 p_2 &= P_x\big[\tau_1\big(B(y,r)\big)<\tau_1\big(\partial B(R)\big)\wedge
 \tau_1(0)\big],\\
 q_{12} &= P_0\big[\tau_1\big(B(y,r)\big)
<\tau_1\big(\partial B(R)\big)\big],\\
 q_{21} &= P_\nu\big[\tau_1(0)<\tau_1\big(\partial B(R)\big)\big],
\end{align*}
where~$\nu$ is the entrance measure to~$B(y,r)$ starting from $x$
conditioned on the event 
$\big\{\tau_1\big(B(y,r)\big)
<\tau_1\big(\partial B(R)\big)\wedge \tau_1(0)\big\}$,
see Figure~\ref{f_p12}. 
\begin{figure}
\begin{center}
\includegraphics[width=0.64\textwidth]{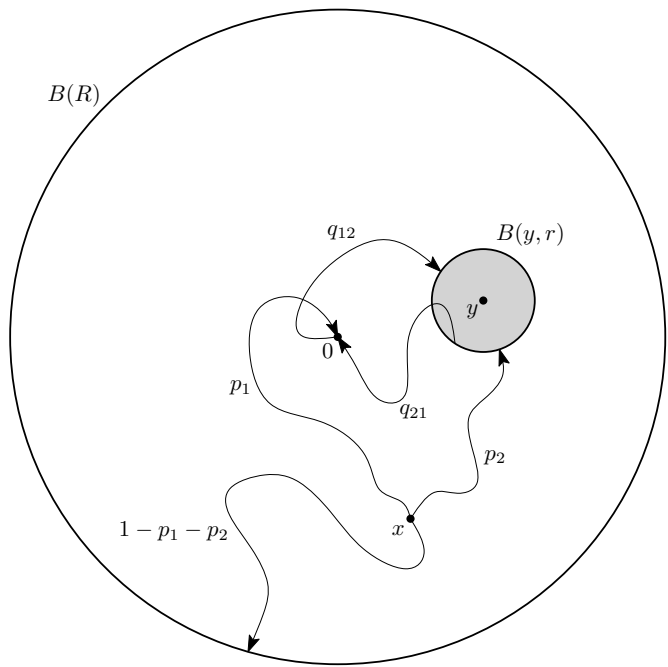}
\caption{On the proof of Lemma~\ref{l_escape_from_ball}}
\label{f_p12}
\end{center}
\end{figure}
Using Lemma~\ref{l_exit_balls}, we obtain
\begin{align}
 h_1 &= 1-\frac{a(x)}{a(R)+O(R^{-1})},
\label{expr_h1}\\
 h_2 &= 1-\frac{a(x-y)-a(r)+O(r^{-1})}{a(R)-a(r)+O(R^{-1}\|y\|+r^{-1})},
\label{expr_h2}
\end{align}
and 
\begin{align}
q_{12} &= 1-\frac{a(y)-a(r)+O(r^{-1})}{a(R)-a(r)+O(R^{-1}\|y\|+r^{-1})},
\label{expr_q12} \\
q_{21} &=1-\frac{a(y)+O(\|y\|^{-1}r)}{a(R)+O(R^{-1}\|y\|)}.
\label{expr_q21} 
\end{align}

Then, as a general fact, it holds that
\begin{align*}
 h_1 &= p_1 + p_2q_{21},\\
 h_2 &= p_2 + p_1q_{12}.
\end{align*}
Solving these equations with respect to $p_1,p_2$, we
obtain
\begin{align}
 p_1 &= \frac{h_1-h_2q_{21}}{1-q_{12}q_{21}},
\label{expr_p1}\\
 p_2 &= \frac{h_2-h_1q_{12}}{1-q_{12}q_{21}},
\label{expr_p2}
\end{align}
and so, using \eqref{expr_h1}--\eqref{expr_q21}, we write
\begin{align*}
 \lefteqn{P_x\big[\tau_1(B(y,r))<\tau_1\big(\partial B(R)\big) 
\mid \tau_1(0)>\tau_1\big(\partial B(R)\big)\big]}\\
  & = \frac{p_2(1-q_{21})}{1-h_1}
 \\
  & = \frac{(h_2 - h_1q_{12})(1-q_{21})}{(1-h_1)(1-q_{12}q_{21})}\\
&= \Bigg(\frac{a(x)}{a(R)+O(R^{-1})}
 +\frac{a(y)-a(r)+O(r^{-1})}{a(R)-a(r)+O(R^{-1}\|y\|+r^{-1})}\\
&\qquad
 - \frac{a(x-y)-a(r)+O(r^{-1})}{a(R)-a(r)+O(R^{-1}\|y\|+r^{-1})}
+O\Big(\frac{\ln\|x-y\|\ln\|y\|}{\ln^2 R}\Big)\Bigg)\\
&\quad \times \frac{a(y)+O(\|y\|^{-1}r)}{a(R)+O(R^{-1}\|y\|)}\times
\Big(\frac{a(x)}{a(R)+O(R^{-1})}\Big)^{-1}
\\
&\quad \times \Bigg(\frac{a(y)-a(r)+O(r^{-1})}
{a(R)-a(r)+O(R^{-1}\|y\|+r^{-1})} 
+ \frac{a(y)+O(\|y\|^{-1}r)}{a(R)+O(R^{-1}\|y\|)}
+O\Big(\frac{\ln^2\|y\|}{\ln^2 R}\Big)\Bigg)^{-1}.
\end{align*}
Sending~$R$ to infinity,
we obtain the proof of~\eqref{eq_escape_from_ball}.

To prove~\eqref{eq_escape_from_anyset}, we use the same procedure.
Define $h'_{1,2}$, $p'_{1,2}$, $q'_{12}$, $q'_{21}$ in the same
way but with~$A$ in place of~$B(y,r)$.
It holds that $h'_1=h_1$, $q'_{21}$ is 
expressed in the same way as~$q_{21}$
(although $q'_{21}$ and $q_{21}$ are not necessarily equal, 
the difference
is only in the error terms $O(\cdot)$)
and, by Lemma~\ref{l_hit_A},
\begin{align*}
 h'_2 &= 1-\frac{a(x-y)-\capa(A)
+O\big(\frac{r\ln r\ln\|x-y\|}{\|x-y\|}\big)}
{a(R)-\capa(A)+O\big(R^{-1}\|x-y\|
+\frac{r\ln r\ln\|x-y\|}{\|x-y\|}\big)},\\
q'_{12} &= 1-\frac{a(y)-\capa(A)
+O\big(\frac{r\ln r\ln\|y\|}{\|y\|}\big)}
{a(R)-\capa(A)+O\big(R^{-1}\|y\|
+\frac{r\ln r\ln\|y\|}{\|y\|}\big)}.
\end{align*}
After the analogous calculations, we obtain~\eqref{eq_escape_from_anyset}.
\end{proof}
\begin{proof}[Proof of relations \eqref{escape_from_site1}--\eqref{escape_from_site2}]
Formula~\eqref{escape_from_site2} rephrases~\eqref{escape_identity} with $A=\{0,x\}$.
 Identity~\eqref{escape_from_site1} follows from the same proof as in
 Lemma~\ref{l_escape_from_ball} (i), using~\eqref{nothit_0}
instead of~\eqref{nothit_r}.
\end{proof}

\subsection{Harmonic measure and capacities}
\label{s_aux_capacities}
Next, we need a formula for calculating the capacity
of three-point sets:
\begin{lem}
\label{l_cap_triangle}
Let $x_1,x_2,x_3\in\Z^2$, and abbreviate $v_1=x_2-x_1$, $v_2=x_3-x_2$,
$v_3=x_1-x_3$. Then, the capacity of the set $A=\{x_1,x_2,x_3\}$
is given by the formula
\begin{equation}
\label{eq_cap_triangle}
 \frac{a(v_1)a(v_2)a(v_3)}
 {a(v_1)a(v_2)+a(v_1)a(v_3)+a(v_2)a(v_3)-\frac{1}{2}
 \big(a^2(v_1)+a^2(v_2)+a^2(v_3)\big)}.
\end{equation}
\end{lem}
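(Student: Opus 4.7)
By the translation invariance of capacity we may assume $x_1=0$, so that $A=\{0,x_2,x_3\}$. Using $a(-y)=a(y)$, set $a_1:=a(v_1)=a(x_2)$, $a_2:=a(v_2)=a(x_3-x_2)$, $a_3:=a(v_3)=a(x_3)$, and write $h_i:=\hm_A(x_i)$ for $i=1,2,3$.

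The heart of the proof is to establish the system
\begin{equation}\label{plan_triangle_syst}
 \sum_{i=1}^3 a(x_j-x_i)\, h_i \;=\; \capa(A), \qquad j=1,2,3.
\end{equation}
For $j=1$ this is exactly~\eqref{df_cap2} (using $a(0)=0$). For $j\in\{2,3\}$, translate $A$ by $-x_j$ so that $x_j$ moves to the origin, apply~\eqref{df_cap2} to the translated set $A-x_j$, and use translation invariance of the harmonic measure (which is inherited from that of SRW through~\eqref{def_hm}, giving $\hm_{A-x_j}(y-x_j)=\hm_A(y)$) together with translation invariance of $\capa$; a relabelling then yields~\eqref{plan_triangle_syst}. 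Combined with the normalisation $h_1+h_2+h_3=1$, this is a $3\times 3$ linear system $M\vec h=\capa(A)\,\vec 1$, $\vec 1^{\top}\vec h=1$, where
\[
 M=\begin{pmatrix} 0 & a_1 & a_3 \\ a_1 & 0 & a_2 \\ a_3 & a_2 & 0 \end{pmatrix}.
\]

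Expanding along the first row gives $\det M=2a_1a_2a_3>0$, so $M$ is invertible and $\vec h=\capa(A)\,M^{-1}\vec 1$; the constraint $\vec 1^{\top}\vec h=1$ then forces
\[
 \capa(A)\;=\;\frac{1}{\vec 1^{\top}M^{-1}\vec 1}\;=\;\frac{\det M}{\displaystyle\sum_{i,j}(\mathrm{adj}\,M)_{ij}}.
\]
A direct computation of the nine cofactors of $M$ produces $\sum_{i,j}(\mathrm{adj}\,M)_{ij}=2(a_1a_2+a_1a_3+a_2a_3)-(a_1^2+a_2^2+a_3^2)$; dividing numerator and denominator by $2$ gives~\eqref{eq_cap_triangle}.

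The only non-routine step is~\eqref{plan_triangle_syst}; the remainder is a straightforward $3\times 3$ calculation. As an alternative, one may invoke Proposition~\ref{p_equalcapa} to replace $\capa$ by $\hcapa$, note that since $\s$ never hits $0$ the event $\{\htau_1(A)=\8\}$ starting from $x_2$ or $x_3$ coincides with non-return to the two-point set $\{x_2,x_3\}$, decompose this escape probability by strong Markov at the first visit to $\{x_2,x_3\}$, and substitute the explicit formulas~\eqref{escape_from_site1}--\eqref{escape_from_site2}; both approaches lead to the same expression after simplification.
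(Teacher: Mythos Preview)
Your proof is correct and follows essentially the same route as the paper's: both reduce to the identity $\capa(A)^{-1}=\sum_{i,j}(M^{-1})_{ij}$ with $M=(a(x_i-x_j))$ and then carry out the same $3\times 3$ inversion. The only difference is that the paper obtains the linear system~\eqref{plan_triangle_syst} by citing Proposition~6.6.3 and Lemma~6.6.4 of~\cite{LL10}, whereas you re-derive it directly from the definition~\eqref{df_cap2} together with translation invariance of capacity and harmonic measure; this makes your argument slightly more self-contained but otherwise identical in substance.
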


\begin{proof}
By Proposition~6.6.3 and Lemma~6.6.4 of~\cite{LL10},
the inverse capacity of~$A$ is equal to the sum of entries
of the matrix
\[
 a_A^{-1} = 
 \begin{pmatrix}
  0 & a(v_1) & a(v_3)\\
  a(v_1) & 0 & a(v_2)\\
  a(v_3) & a(v_2) & 0
 \end{pmatrix}^{-1}
 =\frac{1}{2}
 \begin{pmatrix}
  -\frac{a(v_2)}{a(v_1)a(v_3)} & \frac{1}{a(v_1)} & 
  \frac{1}{a(v_3)}\\
  \frac{1}{a(v_1)} & - \frac{a(v_3)}{a(v_1)a(v_2)} & 
  \frac{1}{a(v_2)}\\
  \frac{1}{a(v_3)} & \frac{1}{a(v_2)}
  & -\frac{a(v_1)}{a(v_2)a(v_3)}
 \end{pmatrix},
\]
and this implies~\eqref{eq_cap_triangle}.
\end{proof}

Before proceeding, let us notice the following immediate consequence
of Lemma~\ref{l_avoid_A}: for any finite $A\subset\Z^2$
such that $0\in A$, we have
\begin{equation}
\label{expr_Cap}
 \capa(A) = \lim_{\|x\|\to\infty} a(x)\hP_x[\htau_1(A)<\infty].
\end{equation}

Next, we need estimates for the $\s$-capacity
of a ``distant'' set, and, in particular
of a ball which does not contain the origin.
Recall the notations $\Psi_{1,2,3}$ from Lemma~\ref{l_escape_from_ball}.
\begin{lem}
\label{l_cap_distantball}
  Assume that $\|y\|>2r\geq 1$. 
\begin{itemize}
 \item[(i)]
We have
\begin{equation}
\label{eq_cap_distball}
 \capa\big(\{0\}\cup B(y,r)\big)
  = \frac{\big(a(y)+O(\Psi_1)\big)\big(a(y)+O(r^{-1})\big)}
{2a(y)-a(r)+O(r^{-1}+\Psi_1)}.
\end{equation}
 \item[(ii)] Suppose that $A\subset B(y,r)$. Then
\begin{equation}
\label{eq_cap_distantset}
\capa\big(\{0\}\cup A\big) =\frac{\big(a(y)+O(\Psi_1)\big)
\big(a(y)+O(r^{-1}+\Psi_2 )\big)}
{2a(y)-\capa(A)+ O(\Psi_2)}.
\end{equation}
\end{itemize}
\end{lem}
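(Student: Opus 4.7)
The plan is to derive this lemma as a direct consequence of Lemma~\ref{l_escape_from_ball} combined with the formula~\eqref{expr_Cap}. The key observation is that the $\s$-walk lives on $\Z^2 \setminus \{0\}$, so it never hits the origin: for any $x \neq 0$, the hitting times of $\{0\}\cup B(y,r)$ and of $B(y,r)$ under $\hP_x$ coincide, and likewise for $\{0\}\cup A$ vs.\ $A$. Thus
\[
 \capa\big(\{0\}\cup B(y,r)\big)
 = \lim_{\|x\|\to\infty} a(x)\hP_x\big[\htau_1\big(B(y,r)\big)<\infty\big],
\]
and analogously for $\{0\}\cup A$.

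For part~(i), I would plug~\eqref{eq_escape_from_ball} into the above limit. The factor $a(x)$ cancels with the $a(x)$ in the denominator of~\eqref{eq_escape_from_ball}, leaving
\[
 \frac{\big(a(y)+O(\Psi_1)\big)\big(a(y)+a(x)-a(x-y)+O(r^{-1})\big)}{2a(y)-a(r)+O(r^{-1}+\Psi_1)}.
\]
From~\eqref{formula_for_a}, $a(x)-a(x-y)=\frac{2}{\pi}\ln\frac{\|x\|}{\|x-y\|}+O(\|x\|^{-2})\to 0$ as $\|x\|\to\infty$ (with $y$ fixed), so in the limit the middle factor becomes $a(y)+O(r^{-1})$, which is exactly~\eqref{eq_cap_distball}. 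Observe that nothing in~\eqref{eq_escape_from_ball} depends on $\|x\|$ except through $a(x)$ and $a(x)-a(x-y)$, so passing to the limit is straightforward.

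For part~(ii), the same procedure applied to~\eqref{eq_escape_from_anyset} yields
\[
 \capa\big(\{0\}\cup A\big) = \lim_{\|x\|\to\infty}\frac{\big(a(y)+O(\Psi_1)\big)\big(a(y)+a(x)-a(x-y)+O(r^{-1}+\Psi_3)\big)}{2a(y)-\capa(A)+O(\Psi_2)}.
\]
Here $\Psi_3 = r\ln r\big(\frac{\ln\|x-y\|}{\|x-y\|}+\frac{\ln\|y\|}{\|y\|}\big)$ depends on $x$, but as $\|x\|\to\infty$ the first term in $\Psi_3$ vanishes while the second equals the corresponding piece of $\Psi_2$; hence $\Psi_3 \to \Psi_2$ in the limit, and $a(x)-a(x-y)\to 0$ exactly as before, giving~\eqref{eq_cap_distantset}.

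There is no real obstacle: the proof is essentially bookkeeping of error terms in an application of~\eqref{expr_Cap}. The only point requiring a small amount of care is verifying that the $x$-dependent pieces of the error in~\eqref{eq_escape_from_anyset} (namely the $\ln\|x-y\|/\|x-y\|$ contribution to $\Psi_3$) vanish in the limit rather than producing an additional stray term, but this is immediate since that contribution tends to $0$ while $\|y\|$ and $r$ remain fixed.
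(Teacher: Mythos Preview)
Your proof is correct and is essentially the paper's own argument spelled out in more detail: the paper simply says the lemma ``immediately follows from~\eqref{expr_Cap} and Lemma~\ref{l_escape_from_ball} (observe that $a(x)-a(x-y)\to 0$ as $x\to\infty$ and $\Psi_3$ becomes $\Psi_2$).'' Your additional remark that $\htau_1(\{0\}\cup B(y,r))=\htau_1(B(y,r))$ because $\s$ never visits the origin is exactly the implicit step needed to pass from~\eqref{expr_Cap} (which requires $0\in A$) to the hitting probabilities of Lemma~\ref{l_escape_from_ball}.
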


\begin{proof}
This immediately follows from~\eqref{expr_Cap} 
and Lemma~\ref{l_escape_from_ball} (observe that
$a(x)-a(x-y)\to 0$ as $x\to \infty$ and~$\Psi_3$ becomes~$\Psi_2$).
\end{proof}

We also need to compare the harmonic measure on a set 
(distant from the origin) to
the entrance measure of the $\s$-walk started far away from that set.
\begin{lem}
\label{l_entrance_hat_s}
Assume that $A$ is a finite subset of $\Z^2$, 
$0\notin A$, $x\neq 0$, and also that
$2\,\diam(A)<\dist(x,A) < \frac{1}{4} \dist(0,A)$.
Abbreviate $u=\diam(A)$, $s=\dist(x,A)$. 
Assume also that $A'\subset \Z^2$ (finite, infinite, or even possibly
empty) is such that $\dist(A,A')\geq s+1$ (for definiteness,
we adopt the convention $\dist(A,\emptyset)=\infty$ for any~$A$).
Then, for $y \in A$, it holds that
\begin{equation}
\label{eq_entrance_hat_s}
\hP_x\big[\s_{\htau_1(A)}=y\mid \htau_1(A)<\infty,
   \htau_1(A)<\htau_1(A')\big] = \hm_A(y)\Big(1+
  O\Big(
\frac{u\ln s}{s}\Big)\Big).
\end{equation}
\end{lem}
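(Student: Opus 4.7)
The strategy is to translate the statement about $\hat S$ into one about the ordinary walk $S$ using the Doob-transform, and then reduce to the entrance-measure estimate used in the proof of Lemma~\ref{l_hit_A}.

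The starting point is the observation that the transition kernel of $\hat S$ is the $h$-transform of that of $S$ by the harmonic function $a$: for every nearest-neighbour path $\gamma=(x=\gamma_0,\dots,\gamma_n=y)$ avoiding the origin, one has $\hat P_x[\hat S_\cdot=\gamma]=(a(y)/a(x))\,P_x[S_\cdot=\gamma]$. Summing this identity over the paths that realise $\{\hat S_{\hat\tau_1(A)}=y,\hat\tau_1(A)<\hat\tau_1(A')\}$ on the left (which become $S$-paths with $\{0\}$ added to the forbidden set on the right, since $\hat S$-paths cannot visit $0$) yields
\[
\hat P_x\big[\hat S_{\hat\tau_1(A)}=y,\hat\tau_1(A)<\hat\tau_1(A')\big]
=\frac{a(y)}{a(x)}\,P_x\big[S_{\tau_1(A)}=y,\,\tau_1(A)<\tau_1(\{0\}\cup A')\big].
\]

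Next, I pass to the conditional probability by dividing by $\sum_{y'\in A}(\cdot)$. For any fixed $y_0\in A$ we have $\|y\|\geq\dist(0,A)>4s$ and $\|y-y_0\|\le u$, hence $|a(y)-a(y_0)|=O(u/s)$, so $a(y)/a(y_0)=1+O(u/(s\ln s))$. The common factor $a(y_0)/a(x)$ cancels, leaving a multiplicative error $1+O(u/(s\ln s))$ which is smaller than the target error $O(u\ln s/s)$. The problem reduces to proving
\[
Q_x(y):=P_x[S_{\tau_1(A)}=y\,\mid\,\tau_1(A)<\tau_1(\{0\}\cup A')]=\hm_A(y)\big(1+O(u\ln s/s)\big).
\]

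To estimate $Q_x(y)$, I fix an intermediate scale $R$ of order~$s$ (for concreteness $R=s/3$), chosen so that $A\subset B(y_0,u)$ sits strictly inside $B(y_0,R)$ while $B(y_0,R)\cap(\{0\}\cup A')=\emptyset$ (this uses $\dist(A,A')\ge s+1$ and $\dist(0,A)>4s$). Strong Markov at $\sigma:=\tau_1(\partial B(y_0,R))$ (well defined since $\dist(x,A)=s>R$ forces $\sigma<\tau_1(A)$) reduces the claim to an analogous statement with the starting point moved to a point $z\in\partial B(y_0,R)$. From such a $z$, the walker performs alternating inward/outward excursions from $\partial B(y_0,R)$; each inward excursion either first hits $A$ or first returns to $\partial B(y_0,R)$, and an outward excursion either returns to $\partial B(y_0,R)$ or first hits $\{0\}\cup A'$. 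By~\eqref{enter_prob} (the conditional form of~\eqref{first_observe} applied to the ball $B(y_0,R)$ of radius $R\asymp s$), each inward excursion that hits $A$ does so with entrance law $\hm_A(y)(1+O(u\ln R/R))=\hm_A(y)(1+O(u\ln s/s))$, \emph{uniformly} in the starting point on $\partial B(y_0,R)$. Averaging over the distribution of such starting points preserves this property, which gives the desired estimate.

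The main obstacle is the last step: the conditioning on $\tau_1(A)<\tau_1(\{0\}\cup A')$ is a global event whose probability can be very small (for instance when $A'$ surrounds~$x$), so a naive decomposition of $Q_x(y)$ as $(P_x[S_{\tau_1(A)}=y]-P_x[S_{\tau_1(A)}=y,\,\tau_1(A)>\tau_1(\{0\}\cup A')])/q$ yields a $1/q$ factor that could blow up. The uniformity in~\eqref{enter_prob} is exactly what bypasses this difficulty: since the conditional entrance distribution on $A$ from a successful inward excursion is the same (up to $O(u\ln s/s)$) regardless of the excursion's starting point on $\partial B(y_0,R)$, no reweighting by the global conditioning can distort it beyond the claimed error.
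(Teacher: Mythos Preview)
Your argument is correct and close in spirit to the paper's, but the order of operations differs in a way worth noting. You first pass globally from $\hat S$ to $S$ via the Doob $h$-transform identity $\hP_x[\gamma]=(a(\gamma_{\text{end}})/a(x))P_x[\gamma]$, observe that the resulting weights $a(y)$, $y\in A$, are all within $1+O(u/(s\ln s))$ of one another and hence cancel in the ratio, and then you work entirely with the simple walk $S$ conditioned on $\{\tau_1(A)<\tau_1(\{0\}\cup A')\}$. The paper instead keeps $\hat S$ throughout, performs a last-visit decomposition at the circle $L=\partial B(z_0,s)$ (with $z_0\in A$ chosen so that $x\in L$), and only at the very end converts the \emph{local} piece $\hP_z[\htau_1(A)<\htau_1(L),\ \hat S_{\htau_1(A)}=y]$ to the analogous $S$-probability via Lemma~\ref{l_relation_S_hatS}~(ii). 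Both routes feed on the same analytic estimate~\eqref{enter_prob}. Your global conversion is arguably cleaner, since it removes the need to invoke Lemma~\ref{l_relation_S_hatS}~(ii) separately.

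Two small points on presentation. First, your ``alternating inward/outward excursions'' description is not literally an alternation (successive visits to $\partial B(y_0,R)$ need not switch sides), and the clean way to make the last step rigorous is exactly the last-visit decomposition the paper writes out: let $\tau^*$ be the last visit to $L$ before $\tau_1(A)$, factor the probability through the Markov property at $\tau^*$, and use that the second factor $P_{z'}[S_{\tau_1(A)}=y\mid\tau_1(A)<\tau_1(L)]=\hm_A(y)(1+O(u\ln s/s))$ is uniform in $z'\in L$ by~\eqref{enter_prob}. This is what your final paragraph is asserting; stating it via $\tau^*$ removes any ambiguity. Second, strictly speaking~\eqref{enter_prob} is written for starting points in the interior of the ball, but its derivation from~\eqref{first_observe} goes through verbatim for $z'\in\partial B(y_0,R)$ with $\tau_1(L)$ read as the first return time (the paper uses it this way as well).
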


\begin{proof}
Let $z_0\in A$ be such that $\|z_0-x\|=s$, and observe that
$A'\cap B(z_0,s) = \emptyset$.
Define the discrete circle $L=\partial B(z_0,s)$; 
observe that $x\in L$ and 
$\dist(z',A)\geq s/2$ for all $z'\in L$.
 Let
\[
 \sigma = \sup\big\{0\leq k\leq \htau_1(A) : \s_k\in L\big\}
\]
be the \emph{last} time before $\htau_1(A)$ when the trajectory
passes through~$L$. Note also that for all~$z\in L$
(recall~\eqref{enter_prob})
\begin{equation}
\label{harmonic_to_A}
 P_z\big[S_{\tau_1(A)}=y \mid \tau_1(A)<\tau_1(L)\big] 
   = \hm_A(y)\Big(1+O\Big(\frac{u\ln s}{s}\Big)\Big).
\end{equation}

 Using the Markov property of~$\s$, we write
\begin{align}
\lefteqn{ \hP_x\big[\htau_1(A)<\infty,
  \htau_1(A)<\htau_1(A'),\s_{\htau_1(A)}=y\big] } \nonumber\\
 &= \sum_{k\geq 0, z\in L} \hP_x\big[\htau_1(A)<\infty, 
 \htau_1(A)<\htau_1(A'),\sigma=k,
 \s_\sigma=z, \s_{\htau_1(A)}=y\big]
 \nonumber\\
 &=\sum_{k\geq 0, z\in L} \hP_x\big[\s_k=z, \s_\ell\notin A\cup A'
 \text{ for all }\ell\leq k\big]
 \nonumber\\
 & \qquad\quad\quad \times
 \hP_z\big[ \htau_1(A)<\infty, \htau_1(A)<\htau_1(A'),
  \s_{\htau_1(A)}=y,\s_\ell\notin L \text{ for all }
 \ell\leq \htau_1(A)\big].
 \label{conta_sigma}
\end{align}
Abbreviate $r=\dist(0,A)$.
Now, observe that the last term in~\eqref{conta_sigma} only involves 
trajectories that lie in $B(z_0,s)$,
and we have $\dist\big(0,B(z_0,s)\big)\geq r/2$. 
 So, we can use Lemma~\ref{l_relation_S_hatS}~(ii) together 
 with~\eqref{harmonic_to_A} to write
\begin{align*}
 \lefteqn{\hP_z\big[ \htau_1(A)<\infty, \htau_1(A)<\htau_1(A'),
   \s_{\htau_1(A)}=y,\s_\ell\notin L \text{ for all }
 \ell\leq \htau_1(A)\big] } \\
 &= P_z\big[S_{\tau_1(A)}=y,S_\ell\notin L \text{ for all }
 \ell\leq \tau_1(A)\big]\Big(1+O\Big(\frac{u}{r\ln r}\Big)\Big)\\
 &= P_z\big[S_{\tau_1(A)}=y \mid \tau_1(A)<\tau_1(L)\big]
 P_z[\tau_1(A)<\tau_1(L)]\Big(1+O\Big(\frac{u}{r\ln r}\Big)\Big)\\
 &= \hm_A(y)\hP_z\big[\htau_1(A)<\htau_1(L)\big]\Big(1+
  O\Big(\frac{u}{r\ln r}+\frac{u\ln s}{s}\Big)\Big).
\end{align*}
Inserting this back to~\eqref{conta_sigma}, we obtain
\begin{align*}
\lefteqn{ \hP_x\big[\htau_1(A)<\infty,\htau_1(A)<\htau_1(A'),
 \s_{\htau_1(A)}=y\big] } \\
 &=\hm_A(y) \Big(1+
  O\Big(\frac{u}{r\ln r}+\frac{u\ln u}{s}\Big)\Big)\\
& \qquad \times  \sum_{k\geq 0, z\in L} \hP_x\big[\s_k=z, 
 \s_\ell\notin A\cup A'
 \text{ for all }\ell\leq k\big]\hP_z\big[\htau_1(A)<\htau_1(L)\big]\\
 &= \hm_A(y) \hP_z\big[\htau_1(A)<\infty,\htau_1(A)<\htau_1(A')\big]
\Big(1+O\Big(\frac{u}{r\ln r}+\frac{u\ln s}{s}\Big)\Big),
\end{align*}
and this concludes the proof of Lemma~\ref{l_entrance_hat_s}
(observe that the first term in~$O(\cdot)$ is of smaller order
than the second one).
\end{proof}

\subsection{Random walk on the torus and its excursions}
\label{s_aux_torus}

First, we define the entrance time to a set $A\subset\Z^2_n$ by
\[
 T_n(A) = \min_{x\in A} T_n(x).
\]

Now, consider two sets $A\subset A'\subset \Z^2_n$, and
suppose that we are only interested in the trace left by the random
walk on the set~$A$. Then, 
(apart from the initial piece of the trajectory until 
hitting~$\partial A'$
for the first time)
it is enough to know 
the excursions of the random walk between the boundaries of~$A$ 
and~$A'$. By definition, an excursion~$\vr$ is a simple 
random walk path that starts at~$\partial A$ and ends
on its first visit to~$\partial A'$, i.e., $\vr=(\vr_0, \vr_1,\ldots,
\vr_m)$, where $\vr_0\in\partial A$, $\vr_m\in\partial A'$,
$\vr_k\notin\partial A'$ and $\vr_k\sim\vr_{k+1}$ for $k<m$. With some abuse 
of notation, we denote by $\vr_{\text{st}}:=\vr_0$ and
$\vr_{\text{end}}:=\vr_m$ the starting and the ending points
of the excursion.
To define these excursions, consider 
the following sequence of stopping times: 
\begin{align*}
 D_0 &= T_n(\partial A'),\\
 J_1 &= \inf\{t> D_0 : X_t \in \partial A\},\\
 D_1 &= \inf\{t> J_1 : X_t \in \partial A'\},
\end{align*}
and
\begin{align*}
 J_k &= \inf\{t> D_{k-1} : X_t \in \partial A\},\\
 D_k &= \inf\{t> J_k : X_t \in \partial A'\},
\end{align*}
for $k\geq 2$. Then,
denote by $Z^{(i)}=(X_{J_i}, \ldots, X_{D_i})$ the $i$th excursion
of~$X$ between~$\partial A$ and~$\partial A'$, for $i\geq 1$.
Also, let $Z^{(0)}=(X_0, \ldots, X_{D_0})$ be the ``initial''
excursion (it is possible, in fact, that it does not intersect
the set~$A$ at all). Recall that $t_\alpha:=\frac{4\alpha}{\pi}n^2\ln^2 n$
and define
\begin{align}
 N_\alpha &= \max\{k : J_k\leq t_\alpha\},\label{df_Na}
\\
 N'_\alpha &= \max\{k : D_k\leq t_\alpha\}\label{df_N'a}
\end{align}
to be the number of incomplete (respectively, complete)
excursions up to time~$t_\alpha$.

Next, we need also to define the excursions of random
interlacements 
in an analogous way. Assume that the trajectories 
of the $\s$-walks that intersect~$A$ are enumerated according
to their $u$-labels (recall the construction in Section~\ref{s_results}).
For each trajectory from that list (say, the $j$th one,
denoted~$\s^{(j)}$ and time-shifted in such a way 
that $\s^{(j)}_k\notin A$ for all $k\leq -1$ and   $\s^{(j)}_0 \in A$)
define the stopping times 
\begin{align*}
 {\hat J}_1 &= 0,\\
 {\hat D}_1 &= \inf\{t> {\hat J}_1 : \s^{(j)}_t \in \partial A'\},
\end{align*}
and
\begin{align*}
 {\hat J}_k &= \inf\{t> {\hat D}_{k-1} : \s^{(j)}_t \in \partial A\},\\
 {\hat D}_k &= \inf\{t> {\hat J}_k : \s^{(j)}_t \in \partial A'\},
\end{align*}
for $k\geq 2$. Let~$\ell_j=\inf\{k:{\hat J}_k=\infty\}-1$ be the
number of excursions corresponding to the $j$th trajectory.
The excursions of RI($\alpha$) between~$\partial A$ 
and~$\partial A'$ are then defined by
\[
 {\hat Z}^{(i)}=(\s^{(j)}_{{\hat J}_m}, \ldots, \s^{(j)}_{{\hat D}_m}),
\]
where 
$ i = m+\sum_{k=1}^{j-1} \ell_k$,
and $m=1,2,\ldots \ell_j$. We let $R_\alpha$ to be the number of
trajectories intersecting~$A$ and with labels less than~$\alpha \pi$,
and denote ${\hat N}_\alpha = \sum_{k=1}^{R_\alpha} \ell_k$
to be the total number of excursions of RI($\alpha$) between~$\partial A$
and~$\partial A'$.

Observe also that the above construction makes sense with $\alpha=\infty$
as well; we then obtain an infinite sequence of excursions
of RI (=RI($\infty$)) between~$\partial A$
and~$\partial A'$.

Finally, let us 
recall a result  of~\cite{DPRZ06} 
on the number of excursions 
for the simple random walk on~$\Z^2_n$.

\begin{lem}
\label{l_excursions_torus}
Consider the random variables $J_k,D_k$ defined in this section with 
$A=B\big(\frac{n}{3\ln n}\big)$, $A'=B\big(\frac{n}{3}\big)$.
Then, there exist positive constants $\delta_0, c_0 $ such that, for 
any $\delta$ with $ \frac{c_0}{\ln n} \leq \delta \leq \delta_0$,
we have
\begin{equation}
\label{eq_excursions_torus_g}
\IP\Big[J_k\in\Big((1-\delta)\frac{2 n^2\ln\ln n}{\pi}k,
(1+\delta)\frac{2 n^2\ln\ln n}{\pi}k\Big)\Big] 
    \geq 1 - \exp\big(-c\delta^2 k\big),
\end{equation}
and the same result holds with~$D_k$ on the place
of~$J_k$.
\end{lem}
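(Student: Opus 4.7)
The plan is to express $J_k$ as a sum of (essentially) i.i.d.\ hitting-time pieces and apply a Chernoff-type large deviation estimate.

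First, I would decompose
\[
J_k = D_0 + \sum_{j=1}^{k-1}(D_j - D_{j-1}) + (J_k - D_{k-1}),
\]
and observe that $D_0 \leq T_n(\partial A')$ is $O(n^2 \ln n)$ with overwhelming probability (being a hitting time of a set of macroscopic diameter on $\Z_n^2$), hence negligible compared with $k \cdot \frac{2 n^2 \ln\ln n}{\pi}$ as soon as $k \geq c_0^{-1}\ln n$, which is guaranteed after adjusting $c_0$ in the assumption $\delta \geq c_0/\ln n$. By the strong Markov property at $D_{j-1}$, the increment $D_j - D_{j-1}$ is distributed as $J_1 - D_0$ started from $X_{D_{j-1}} \in \partial A'$, followed by the return piece $D_1 - J_1$, which has mean of order $n^2$ (hitting time of a macroscopic set starting at distance $\leq n/\ln n$ from it) and is therefore also negligible. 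So the problem reduces to controlling a sum of $k$ hitting times of $\partial A$, each started from some point of $\partial A'$.

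Next, I would compute the mean. Lifting the torus walk to $\Z^2$ and applying \eqref{nothit_r} to concentric balls around the origin of radii $n/(3\ln n)$ and $cn$, one shows that each excursion of the lifted walk between $\partial A$ and $\partial A'$ exits the ball $B(cn)$ before hitting $\partial A$ with probability of order $\ln\ln n/\ln n$; combining this with the standard Kac-type identity $E_\pi[\tau_1(\partial A)] = n^2/\capa_{\mathrm{eff}}(\partial A)$ (or, equivalently, a Green's-function computation on the torus), one obtains
\[
E_x[\tau_1(\partial A)] = \frac{2 n^2 \ln\ln n}{\pi}\bigl(1 + O(1/\ln n)\bigr)
\]
uniformly in $x\in\partial A'$. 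The $O(1/\ln n)$ bias here is exactly what necessitates the hypothesis $\delta \geq c_0/\ln n$.

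For the concentration itself, the key ingredient is a uniform exponential-moment bound: there exists $\lambda_0 > 0$ such that, for every $x\in\partial A'$,
\[
E_x\bigl[\exp\bigl(\lambda (D_1 - D_0)\bigr)\bigr] \leq 2, \qquad 0 \leq \lambda \leq \frac{\lambda_0}{n^2 \ln\ln n}.
\]
This is obtained by partitioning time into blocks of length $M n^2 \ln\ln n$ and showing that within each block the walk hits $\partial A$ with probability bounded below by a positive constant, uniformly in the starting point (so that the hitting time is stochastically dominated by a geometric variable at the right scale). Once this is in place, a standard Chernoff/Bernstein computation on the sum $\sum_{j=1}^{k-1}(D_j - D_{j-1})$ gives
\[
\IP\!\left[\Bigl|J_k - \tfrac{2 n^2 \ln\ln n}{\pi}k\Bigr| > \delta \tfrac{2 n^2 \ln\ln n}{\pi}k\right] \leq 2\exp(-c\delta^2 k),
\]
valid for $c_0/\ln n \leq \delta \leq \delta_0$. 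The analogous statement for $D_k$ follows by adding a single extra return-excursion of size $O(n^2)$, which is absorbed into the error by readjusting $c_0$. The main obstacle is step three: establishing the uniform exponential-moment bound together with a sharp enough mean estimate, while handling the weak dependence between consecutive inter-excursion times (entering only through the starting point $X_{D_{j-1}}$). Both are absorbed into the uniform-in-$x$ estimates for $E_x[\tau_1(\partial A)]$ and its moment generating function.
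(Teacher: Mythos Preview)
The paper does not prove this lemma at all: its entire proof is the single sentence ``This is, in fact, a particular case of Lemma~3.2 of~\cite{DPRZ06}.'' Your proposal, by contrast, sketches a direct argument, and that argument is essentially the content of the cited lemma in \cite{DPRZ06}: decompose $J_k$ into nearly i.i.d.\ inter-excursion times, identify the mean as $\frac{2}{\pi}n^2\ln\ln n (1+O(1/\ln n))$, obtain a uniform exponential moment by a geometric-domination/block argument, and run Chernoff. So your route is correct and in fact recovers what the paper is quoting.

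Two places where your sketch is a bit loose. First, the ``Kac-type identity'' you invoke gives the stationary return time, not the hitting time of $\partial A$ from $\partial A'$; the sharp mean with $O(1/\ln n)$ error is obtained in \cite{DPRZ06} via torus Green's-function estimates rather than Kac. Second, your handling of the boundary piece $D_0$ is slightly off: the constraint $\delta\geq c_0/\ln n$ does not by itself bound $k$ from below, and what actually saves you is that for $\delta^2 k$ bounded the claimed inequality is trivial (the right-hand side is bounded away from~$1$), while for $\delta^2 k$ large one has $\delta k\cdot n^2\ln\ln n \gg n^2$, which absorbs $D_0$. Neither point is a genuine gap; the strategy is the standard one and matches the literature the paper cites.
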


\begin{proof}
 This is, in fact, a particular case of Lemma~3.2 of~\cite{DPRZ06}.
\end{proof}

We observe that~\eqref{eq_excursions_torus_g} means
that the ``typical'' number of excursions
by time~$s$ is $\frac{\pi s}{2n^2\ln\ln n}$.
In particular, a useful consequence
of Lemma~\ref{l_excursions_torus} is that for all uniformly 
positive~$\alpha$ and all large enough~$n$
\begin{equation}
\label{eq_excursions_torus}
\IP\Big[(1-\delta)\frac{2\alpha\ln^2 n}{\ln\ln n}\leq N_\alpha 
 \leq (1+\delta)\frac{2\alpha\ln^2 n}{\ln\ln n}\Big] 
    \geq 1 - \exp\Big(-c\delta^2\frac{\ln^2 n}{\ln\ln n}\Big),
\end{equation}
and the same result holds with $N'_\alpha$ on the place
of~$N_\alpha$,
where $N_\alpha,N'_\alpha$ are defined 
as in~\eqref{df_Na}--\eqref{df_N'a} with 
$A=B\big(\frac{n}{3\ln n}\big)$, $A'=B\big(\frac{n}{3}\big)$.

\section{Proofs of the main results}
\label{s_proofs}
We first prove the results related to random interlacements,
and then deal with the connections between random interlacements
and random walk on the torus in Section~\ref{s_proof_torus}.

\subsection{Proofs for random interlacements}
\label{s_proofs_interl}
First of all, we apply some results of Section~\ref{s_aux}
to finish the proof of Theorem~\ref{t_properties_RI}.
\begin{proof}[Proof of Theorem~\ref{t_properties_RI}, parts (iii)--(v).]
Recall the fundamental formula~\eqref{eq_vacant2} 
for the random interlacement and the relation~\eqref{formula_for_a}.
 Then, the  statement~(iv)
follows from Lemma~\ref{l_cap_triangle} 
and from (\ref{properties_RI_ii}), 
while~(v) is a consequence of Lemma~\ref{l_cap_distantball}~(i).

Finally, observe that, by symmetry, Theorem~\ref{t_properties_RI}~(ii),
 and Lemma~\ref{l_cap_distantball}~(ii) we have
\begin{align*}
\IP[A\subset\V^\alpha \mid x\in \V^{\alpha}] 
&= \exp\Big(-\pi\alpha\big(\capa(A\cup\{x\})-\capa(\{0,x\})\big)\Big)\\
&=\exp\Bigg(-\pi\alpha
\Big(\frac{\big(a(x)+O\big(\frac{r\ln r\ln\|x\|}{\|x\|}\big)\big)^2}
{2a(x)-\capa(A)+O\big(\frac{r\ln r\ln\|x\|}{\|x\|}\big)}
-\frac{a(x)}{2}\Big)\Bigg)\\
&=\exp\Bigg(-\frac{\pi\alpha}{4}\capa(A)
\frac{1+O\big(\frac{r\ln r \ln\|x\|}{\|x\|}\big)}
{1-\frac{\capa(A)}{2a(x)}
+O\big(\frac{r\ln r}{\|x\|}\big)}
\Bigg),
\end{align*}
thus proving the part~(iii).
\end{proof}

\begin{proof}[Proof of Theorem~\ref{t_sizevacant} (iii)]
 We start by observing that the first part of~(iii) follows
 from the bound~\eqref{eq_emptyball} and Borel-Cantelli. 
So, let us concentrate on proving~\eqref{eq_emptyball}.
Recall the following elementary
 fact: let~$N$ be a Poisson random variable with parameter~$\lambda$,
 and $Y_1,Y_2, Y_3,\ldots$ be independent (also of~$N$) random
 variables with exponential distribution ${\mathcal E}(p)$ with parameter~$p$.
Let~$\Theta=\sum_{j=1}^N Y_j$ be the corresponding
compound Poisson random variable. Its Cram\'er transform $b \mapsto \lambda(\sqrt{b}-1)^2$ is easily computed, and Chernov's bound gives, 
 for all $b>1$,
\begin{equation}
\label{CompPoisson_LD}
 \IP[\Theta \geq b\lambda p^{-1}] \leq \exp\big(-\lambda(\sqrt{b}-1)^2\big).
\end{equation}

Now, assume that $\alpha<1$.
Fix $\beta\in (0,1)$, 
which will be later taken close  to~$1$, and fix some set of non-intersecting 
disks~$B'_1=B(x_1,r^\beta),\ldots,B'_{k_r}=B(x_{k_r},r^\beta)\subset
B(r)\setminus B(r/2)$, with cardinality $k_r = \frac{1}{4}r^{2(1-\beta)}$.
Denote also $B_j:= B\big(x_j, \frac{r^\beta}{\ln^3 r^\beta}\big)$,
$j=1,\ldots,k_r$. 

Before going to the heart of the matter we briefly 
sketch the strategy of proof (also,
one may find it helpful to look at Figure~\ref{f_disks}). 
We start to show that at least a half of these balls $B_j$ will receive at most
$b\frac{2\alpha\ln^2 r}{3\ln\ln r^\beta}$ 
excursions from~$\partial B_{j}$ to~$\partial B'_{j}$ up to time $t_\alpha$, where 
$b>1$ is a parameter (the above number of excursions is larger than
the typical number of excursions by factor~$b$). 
Moreover, using the method of soft local times~\cite{CGPV,SLT}, we couple 
such excursions from $RI(\alpha)$ with 
a slightly larger number of independent excursions from the $\s$-walk: 
with overwhelming probability, the trace on 
$\cup_j B_j$ of the latter excursion process contains the trace of the former, 
so the vacant set~$\V^\alpha$ restricted to balls~$B_j$ is smaller than the set
of unvisited points by the independent process. Now, by independence, 
it will be possible to estimate the probability for leaving 
that many balls partially uncovered, and this will conclude the proof. 
\begin{figure}
\begin{center}
\includegraphics{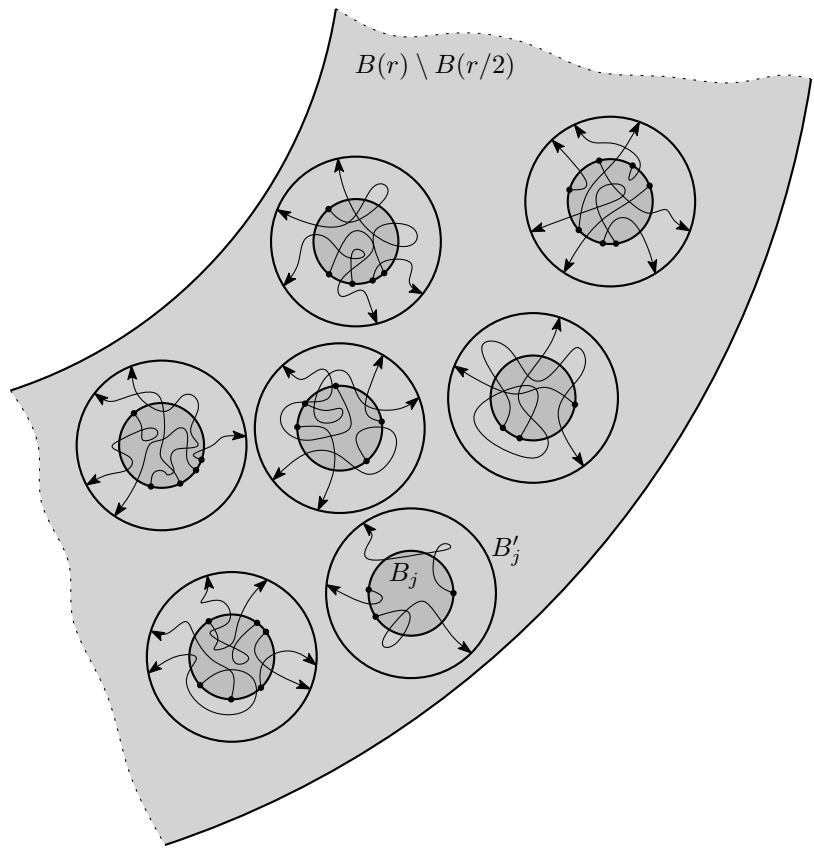}
\caption{On the proof of Theorem~\ref{t_sizevacant} (iii). With 
high probability, at least a positive proportion of the inner circles 
is not completely covered.}
\label{f_disks}
\end{center}
\end{figure}

Let us observe that the number of $\s$-walks in $RI(\alpha)$ 
intersecting a given disk~$B_{j}$
has Poisson law with parameter 
$\lambda=(1+o(1))\frac{2\alpha}{2-\beta}\ln r$. 
Indeed, the law is Poisson by construction, 
the parameter $\pi \alpha \capa (B_{j} \cup \{0\})$ is found 
in~\eqref{eq_vacant2} and 
then estimated  using
Lemma~\ref{l_cap_distantball} (i).

Next, by Lemma~\ref{l_escape_from_ball} (i), the probability that
the walk $\s$ started from any $y\in\partial B'_{j}$
does not hit~$B_{j}$ is $(1+o(1))\frac{3\ln\ln r^\beta}{(2-\beta)\ln r}$. 
This depends on the starting point, 
however, after the first visit to~$\partial B_{j}$,
 each $\s$-walk generates a number of excursions 
between~$\partial B_{j}$ and~$\partial B'_{j}$ which is dominated by a 
geometric law $G(p')$ (supported on $\{1,2,3,\ldots\}$)
 with success parameter 
$p'=(1+o(1))\frac{3\ln\ln r^\beta}{(2-\beta)\ln r}$.
Recall also that the integer part of ${\mathcal E}(u)$ 
is geometric $G(1-e^{-u})$.
So, with $p=-\ln (1-p')$, the total number ${\hat N}_\alpha^{(j)}$ of excursions 
between~$\partial B_{j}$ and~$\partial B'_{j}$ in RI($\alpha$) 
can be dominated by a compound
Poisson law with ${\mathcal E}(p)$ terms in the sum 
with expectation 
\[
\lambda p^{-1}
  = (1+o(1))\frac{2\alpha\ln^2 r}{3\ln\ln r^\beta}.
\]
Then, using~\eqref{CompPoisson_LD}, we obtain for $b>1$
\begin{align}
\IP\Big[{\hat N}_\alpha^{(j)}\geq b\frac{2\alpha\ln^2 r}
{3\ln\ln r^\beta}\Big] &\leq
\exp\Big(-(1+o(1))\big(\sqrt{b}-1\big)^2 \frac{2\alpha}{2-\beta}
\ln r\Big) \nonumber\\
&=  r^{-(1+o(1))(\sqrt{b}-1)^2 \frac{2\alpha}{2-\beta}}.
\label{LD_numb_exc}
\end{align}
Now, let~$W_b$ be the set 
\[
 W_b = \Big\{j\leq k_r: {\hat N}_\alpha^{(j)} 
< b\frac{2\alpha\ln^2 r}{3\ln\ln r^\beta}\Big\}.
\]
Combining~\eqref{LD_numb_exc} with Markov inequality, we obtain
\begin{equation*}
\frac{k_r}{2} \IP\big[|\{1,\ldots,k_r\}\setminus W_b| > k_r/2\big] 
\leq \E \big\vert \{1,\ldots,k_r\}\setminus W_b \big\vert 
\leq 
k_r r^{-(1+o(1))(\sqrt{b}-1)^2
\frac{2\alpha}{2-\beta}},
\end{equation*}
so
\begin{equation}
\label{manydisks}
\IP\big[|W_b|\geq k_r/2\big] \geq 1-2r^{-(1+o(1))(\sqrt{b}-1)^2
\frac{2\alpha}{2-\beta}}.
\end{equation}
Assume that~$1<b<\alpha^{-1}$ and~$\beta\in (0,1)$ is close enough to~$1$,
so that $\frac{b\alpha}{\beta^2}<1$. 
As in Section~\ref{s_aux_torus}, we denote by
${\hat Z}^{(1),j},\ldots,{\hat Z}^{({\hat N}_\alpha^{(j)}),j}$
the excursions of RI($\alpha$)
between~$\partial B_j$ and~$\partial B'_j$. Also, let
$ {\tilde Z}^{(1),j},{\tilde Z}^{(2),j},{\tilde Z}^{(3),j},\ldots$
be a sequence of i.i.d.\ $\s$-excursions 
between~$\partial B_j$ and~$\partial B'_j$, started with the law
 $\hm_{B_j}$; these sequences themselves are also assumed to 
 be independent. 
 
Abbreviate $m= b\frac{2\alpha\ln^2 r}{3\ln\ln r^\beta}$. Next, 
for $j=1,\ldots,k_r$ we consider the events
\[
 D_j = \Big\{\big\{{\hat Z}^{(1),j},\ldots,
{\hat Z}^{({\hat N}_\alpha^{(j)}),j}\big\}
\subset \big\{{\tilde Z}^{(1),j}, \ldots, 
{\tilde Z}^{((1+\delta)m),j}\big\}\Big\}.
\]

\begin{lem}
\label{l_SLT_coupling}
 It is possible to construct the excursions 
 $\big({\hat Z}^{(k),j}, k=1,\ldots, {\hat N}_\alpha^{(j)}\big)$,
 $\big({\tilde Z}^{(k),j}, k=1,2,3,\ldots\big)$, $j=1,\ldots,k_r$,
on a same probability space in such a way that
for a fixed $C'>0$
\begin{equation}
\label{D_j_complementary}
 \IP\big[D_j^\complement\big] 
\leq \exp\Big(- C' \frac{\ln^2 r}{(\ln\ln r)^2}\Big),
\end{equation}
for all $j=1,\ldots,k_r$.
\end{lem}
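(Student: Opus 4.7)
The plan is to apply the soft local times (SLT) technique of~\cite{SLT,CGPV} to couple the starting points of the RI excursions at $\partial B_j$ with an i.i.d.\ sequence drawn from the harmonic measure $\hm_{B_j}$; once two excursions share a starting point on $\partial B_j$, their subsequent $\s$-walk paths can be coupled perfectly by construction.

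The first step is to verify the density hypothesis required by SLT: for each~$k$ and each possible past of the RI process, the conditional law of the starting point $\hat X_k:={\hat Z}^{(k),j}_{\text{st}}$ is bounded above by $(1+\eps_r)\hm_{B_j}$, with $\eps_r=O(\ln^{-2} r)$. For the initial entrance of a trajectory into $B_j$, the starting distribution is the normalized equilibrium measure of $\{0\}\cup B_j$ restricted to $\partial B_j$, which by~\eqref{escape_identity} matches $\hm_{B_j}$ up to an error of this order. For the returns from $\partial B'_j$, one invokes Lemma~\ref{l_entrance_hat_s} with $A=\partial B_j$, $s\sim r^\beta$ and $u\sim r^\beta/\ln^3 r^\beta$, giving the required error $O(u\ln s/s)=O(\ln^{-2} r)$.

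The second step is the SLT construction. Introduce a Poisson point process $\Pi$ on $\partial B_j\times\R^+$ with intensity $\hm_{B_j}\otimes d\lambda$. From $\Pi$ read off the reference sequence $(\tilde X_k)$ at uniform levels $\tilde G_k=\sum_{i\le k}\tilde\xi_i$ with $\tilde\xi_i$ i.i.d.\ ${\mathcal E}(1)$, and the RI sequence $(\hat X_k)$ at the soft local time $G_n(y)=\sum_{k\le n}\xi_k\, g_k(y)$, where $g_k$ is the conditional density of $\hat X_k$ relative to $\hm_{B_j}$ and $\xi_k$ are independent ${\mathcal E}(1)$. By the first step, $g_k\le 1+\eps_r$ pointwise. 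The remainders of the excursions (after their starting points) are then generated as conditionally independent $\s$-walks killed at $\partial B'_j$. The standard consequence of this construction is that $D_j$ occurs on the event $\{G_{{\hat N}_\alpha^{(j)}}(y)\le \tilde G_{(1+\delta)m}$ for all $y\in\partial B_j\}$.

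To estimate $\IP[D_j^\complement]$, set $m_\star=(1+\delta/2)\lambda/p$, so that $m_\star<m$. On $\{{\hat N}_\alpha^{(j)}\le m_\star\}$, the density bound yields $G_{{\hat N}_\alpha^{(j)}}\le(1+\eps_r)\sum_{k\le m_\star}\xi_k$, and two Cram\'er estimates for sums of $\sim\ln^2 r/\ln\ln r$ independent ${\mathcal E}(1)$ variables give $G_{{\hat N}_\alpha^{(j)}}\le\tilde G_{(1+\delta)m}$ except with probability $\exp(-c\delta^2\ln^2 r/\ln\ln r)$, comfortably below the target. The main obstacle is the tail estimate $\IP[{\hat N}_\alpha^{(j)}>m_\star]$: since ${\hat N}_\alpha^{(j)}$ is compound Poisson with Poisson rate $\lambda\sim\ln r$ and geometric jumps of mean $\sim\ln r/\ln\ln r$, the Chernoff bound~\eqref{CompPoisson_LD} with $b$ fixed decays only polynomially in~$r$. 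To reach the claimed stretched-exponential rate, I would condition on the Poisson number of trajectories~$R_\alpha$ and, on $\{R_\alpha\sim\lambda\}$, apply a Bernstein-type bound to the sum of the individual excursion counts after truncating the geometric jumps at a level well above their mean; this is the delicate part of the argument.
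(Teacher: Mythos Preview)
Your overall strategy matches the paper's: build the coupling via the soft local times of~\cite{SLT,CGPV}, with the density hypothesis supplied by Lemma~\ref{l_entrance_hat_s} (and, for the first entrance of each trajectory, by~\eqref{escape_identity}). The paper likewise takes the slack $v=(\ln\ln r)^{-1}$, well above the $O(\ln^{-2}r)$ oscillation of the entrance densities, and then simply quotes Lemma~2.1 of~\cite{CGPV} to obtain~\eqref{D_j_complementary}. Your first two steps are therefore correct and essentially identical to what is intended.

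Where your argument diverges is in the last paragraph, and there you have put your finger on a real difficulty --- but your proposed fix cannot succeed. The tail $\IP[\hat N_\alpha^{(j)}>m_\star]$ is \emph{irreducibly polynomial} in~$r$: a sum of $R_\alpha\sim\text{Poisson}(\lambda)$ independent geometrics with success probability $p'\sim\ln\ln r/\ln r$ has large-deviation rate of order~$\lambda\sim\ln r$ for deviations by a fixed factor, exactly as in~\eqref{CompPoisson_LD}. No amount of conditioning, truncation, or Bernstein refinement improves this scale; concretely, already a \emph{single} trajectory makes more than~$m_\star$ excursions with probability $(1-p')^{m_\star}\asymp r^{-c}$, so $\IP[\hat N_\alpha^{(j)}>m_\star]\ge r^{-c}$. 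Since $D_j^\complement\supset\{\hat N_\alpha^{(j)}>(1+\delta)m\}$ (a set of cardinality $\hat N_\alpha^{(j)}$ cannot sit inside one of cardinality $(1+\delta)m<\hat N_\alpha^{(j)}$), the inequality~\eqref{D_j_complementary} taken literally for the event~$D_j$ as written is in fact too strong.

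The resolution is structural rather than analytic. In the subsequent argument, $D_j$ is only ever invoked together with $\{j\in W_b\}=\{\hat N_\alpha^{(j)}<m\}$, and on that event the number of RI excursions to be absorbed is at most the \emph{deterministic}~$m$. What is really needed --- and what the SLT estimate for a fixed number~$m$ of steps delivers --- is
\[
\IP\Big[\big\{\hat Z^{(1),j},\dots,\hat Z^{(\hat N_\alpha^{(j)}\wedge m),j}\big\}\not\subset\big\{\tilde Z^{(1),j},\dots,\tilde Z^{((1+\delta)m),j}\big\}\Big]\le\exp\Big(-C'\frac{\ln^2 r}{(\ln\ln r)^2}\Big).
\]
Your own ``good event'' analysis (the two Cram\'er bounds on sums of $\sim m$ exponentials) already gives this, even with the sharper exponent $\ln^2 r/\ln\ln r$. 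So you should abandon the attempt to control $\{\hat N_\alpha^{(j)}>m_\star\}$ inside the lemma and instead note that the modified event suffices for the application, the control of large~$\hat N_\alpha^{(j)}$ having been delegated to the separate estimate~\eqref{LD_numb_exc} on~$W_b$.
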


\begin{proof}
This coupling can be built using the method
of \emph{soft local times} of~\cite{SLT}; in this specific
situation, the exposition of~\cite{CGPV} is better suited.
Roughly speaking, this method consists of using a marked
Poisson point process on $\big(\bigcup_j\partial B_j\big)\times \R_+$,
where the ``marks'' are corresponding excursions; see Section~2 of~\cite{CGPV}
for details. Then, the ${\tilde Z}$-excursions are simply the marks of the 
points of the Poisson processes on $\partial B_j\times \R_+$ ordered
according to the second coordinate, so they are independent by
construction. The ${\hat Z}$-excursions are also the marks of the 
points of these Poisson processes, but generally taken in a different
order using a special procedure; Figure~1 of~\cite{CGPV} is, hopefully,
self-explanating and may provide some quick insight.

The inequality~\eqref{D_j_complementary} then follows from
Lemma~2.1 of~\cite{CGPV} (observe that, by 
Lemma~\ref{l_entrance_hat_s}, the parameter~$v$ in Lemma~2.1 of~\cite{CGPV}
can be anything exceeding $O(1/\ln^2 r)$, so we
choose e.g.\ $v=(\ln\ln r)^{-1}$; also, that lemma is clearly
valid for $\s$-excursions as well).
\end{proof}

We continue the proof of part~(iii) of Theorem~\ref{t_sizevacant}. Define
\[
 D = \bigcap_{j\leq k_r} D_j;
\]
using~\eqref{D_j_complementary}, we obtain by the union bound the subpolynomial estimate
\begin{equation}
\label{est_D}
\IP\big[D^\complement\big] 
\leq \frac{1}{4}r^{2(1-\beta)}
\exp\Big(- C' \frac{\ln^2 r}{(\ln\ln r)^2}\Big).
\end{equation}

Let~$\delta>0$ be such that $(1+\delta)\frac{b\alpha}{\beta^2}<1$.
Define the events
\[
 {\tilde\G}_j = \big\{B'_j\text{ is completely covered by }
  {\tilde Z}^{(1),j}\cup\cdots\cup{\tilde Z}^{((1+\delta)m),j}\big\}.
\]
Then, for all~$j\leq k_r$ it holds that
\begin{equation}
\label{1/5}
 \IP[{\tilde\G}_j]\leq \frac{1}{5}
\end{equation}
for all large enough~$r$. Indeed, if the $\tilde Z$'s were 
independent SRW-excursions,
the above inequality (with any fixed constant in the right-hand
side) could be obtained as in
the proof of Lemma~3.2 of~\cite{CGPV} 
(take ${\tilde H}=\hm_{B_j}$ and $n=3r^\beta + 1$ there). On the other hand, 
Lemma~\ref{l_relation_S_hatS}~(ii) implies that the first~$(1+\delta)m$
$\s$-excursions can be coupled with SRW-excursions with
high probability, so~\eqref{1/5} holds
for $\s$-excursions as well.

Next, define the set
\[
 {\widetilde W} = \big\{j\leq k_r : {\tilde\G}_j^\complement
  \text{ occurs}\big\}.             
\]
Since the events $({\tilde\G}_j, j\leq k_r)$ are independent, 
by~\eqref{1/5} we have (recall that $k_r = \frac{1}{4}r^{2(1-\beta)}$)
\begin{equation}
\label{est_widetilde_W}
 \IP\Big[|{\widetilde W}| \geq \frac{3}{5}k_r\Big]
 \geq 1 - \exp\big(-C r^{2(1-\beta)}\big) 
\end{equation}
for all~$r$ large enough. 

Observe that, by construction, on the event~$D$
we have $\V^\alpha\cap B'_j\neq \emptyset$
for all $j\in {\widetilde W}\cap W_b$. 
So, using~\eqref{manydisks}, \eqref{est_widetilde_W},
and~\eqref{est_D}, we obtain
\begin{align*}
\lefteqn{ \IP\Big[\V^\alpha\cap \big(B(r)\setminus B(r/2)\big)
=\emptyset\Big]}\\ 
&\leq 
 \IP\Big[|W_b|< \frac{k_r}{2}\Big] + 
\IP\big[D^\complement\big] 
+ \IP\Big[|{\widetilde W}|< \frac{3k_r}{5}\Big]
\\ 
  &\leq  
  2r^{-(1+o(1))(\sqrt{b}-1)^2
\frac{2\alpha}{2-\beta}}
+\exp\big(-C r^{2(1-\beta)}\big) 
+ \frac{1}{4}r^{2(1-\beta)}
\exp\Big(- C' \frac{\ln^2 r}{(\ln\ln r)^2}\Big).
\end{align*}
Since $b\in (1,\alpha^{-1})$ can be arbitrarily close
to~$\alpha^{-1}$ and $\beta\in (0,1)$ 
can be arbitrarily close to~$1$,
this concludes the proof of (\ref{eq_emptyball}).
\end{proof}

\begin{proof}[Proof of Theorem~\ref{t_sizevacant} (ii)]
To complete the proofs in Section~\ref{s_results}, it remains to prove that $|\V^\alpha|<\infty$
a.s.\ for $\alpha>1$.
 First,we establish the following elementary fact. 
For $x\in\partial B(2r)$ and $y\in B(r)\setminus B(r/2)$, it holds
\begin{equation}
\label{exc_r_2r}
 \hP_x\big[\htau_1(B(y))<\htau_1\big(B(r\ln r)\big)\big] 
= \frac{\ln\ln r}{\ln r}\big(1+o(1)\big).
\end{equation}
 Indeed, define the events
\begin{align*}
 G_0 &= \{\tau_1(B(0))<\tau_1\big(B(r\ln r)\big)\},\\
 G_1 &= \{\tau_1(B(y))<\tau_1\big(B(r\ln r)\big)\};
\end{align*}
then, Lemma~\ref{l_exit_balls} implies that 
\[
 P_x[G_0] = \frac{\ln\ln r}{\ln r}\big(1+o(1)\big) = P_x[G_1] \big(1+o(1)\big) .
\]
Observe that 
\begin{align*}
  P_x[G_0 \cap G_1] &= P_x[G_0 \cup G_1] P_x[G_0 \cap G_1\mid G_0 \cup G_1]\\
&\leq \big(P_x[G_0]+P_x[G_1]\big)\big(P_0[G_1]+P_y[G_0]\big)\\
&\leq 4\Big(\frac{\ln\ln r}{\ln r}\Big)^2 \big(1+o(1)\big).
\end{align*}
So,
\begin{align*}
 P_x[G_1\mid G_0^\complement] 
&= \frac{P_x[G_1]-P_x[G_0\cap G_1]}{1-P_x[G_0]}\\
  &= \frac{\ln\ln r}{\ln r}\big(1+o(1)\big),
\end{align*}
and we use Lemma~\ref{l_relation_S_hatS} to conclude
the proof of~\eqref{exc_r_2r}.

Now, the goal is to prove that, for $\alpha>1$
\begin{equation}
\label{covered_ring}
 \IP\big[\text{there exists }y\in B(r)\setminus B(r/2)
\text{ such that }y\in\V^\alpha\big] 
\leq r^{-\frac{\alpha}{2}(1-\alpha^{-1})^2(1+o(1))}.
\end{equation}
This would clearly imply that the set~$\V^\alpha$ is a.s.\
finite, since
\begin{equation}
\label{BorCan}
 \{\V^\alpha\text{ is infinite}\}
= \big\{\V^\alpha\cap\big(B(2^n)\setminus B(2^{n-1})\big)\neq \emptyset
\text{ for infinitely many }n\big\},
\end{equation}
and the Borel-Cantelli lemma together with~\eqref{covered_ring}
imply that the probability of the latter event equals~$0$.

Let~$N_{\alpha,r}$ be the number of $\s$-excursions of RI($\alpha$)
between~$\partial B(r)$ and $\partial B(r\ln r)$. 
Analogously to~\eqref{LD_numb_exc} (using Lemma~\ref{l_escape_hatS} 
in place of Lemma~\ref{l_escape_from_ball}~(i)), 
it is straightforward
to show that, for $b<1$, 
\begin{equation}
\label{LD_exc_b<1}
\IP\Big[N_{\alpha,r} \leq b\frac{2\alpha\ln^2 r}{\ln\ln r}\Big] 
\leq r^{-2\alpha(1-\sqrt{b})^2 (1+o(1))}.
\end{equation}
Now, \eqref{exc_r_2r} implies that for $y\in B(r)\setminus B(r/2)$
\begin{align}
 \IP\Big[y\text{ is uncovered by first }
 b\frac{2\alpha\ln^2 r}{\ln\ln r}\text{ excursions}\Big] 
& \leq \Big(1-\frac{\ln\ln r}{\ln r}
(1+o(1))\Big)^{b\frac{2\alpha\ln^2 r}{\ln\ln r}}\nonumber\\
& = r^{-2b\alpha(1+o(1))},
\label{hit_y_in_the_ring}
\end{align}
so, using the union bound,
\begin{equation}
\label{cover_ring_b}
 \IP\Big[\exists y\in B(r)\setminus B(r/2):
y\in\V^\alpha , N_{\alpha,r} >
 b\frac{2\alpha\ln^2 r}{\ln\ln r}
\Big] 
\leq r^{-2(b\alpha-1)(1+o(1))}.
\end{equation}
Using~\eqref{LD_exc_b<1} and~\eqref{cover_ring_b} with 
$b=\frac{1}{4}\big(1+\frac{1}{\alpha}\big)^2$ we
conclude the proof of~\eqref{covered_ring} and
of Theorem~\ref{t_sizevacant}~(ii).
\end{proof}

\subsection{Proof of Theorem~\ref{t_conditional}}
\label{s_proof_torus}
Let us first give a more detailed
heuristic argument for~\eqref{eq_conditional}.
As usual, we consider the \emph{excursions} of the random walk~$X$
between $\partial B\big(\frac{n}{3\ln n}\big)$ and $\partial B(n/3)$
up to time~$t_\alpha$.
Recall that~$N_\alpha$ denotes the number of these excursions. 
Lemma~\ref{l_excursions_torus} shows
that this (random) number is concentrated around 
$\frac{2\alpha\ln^2 n}{\ln\ln n}$, with deviation probabilities
of subpolynomial order.
This is for unconditional probabilities, but, since the probability
of the event $\{0\in U_{t_\alpha}^{(n)}\}$ is only polynomially
small (actually, it is $n^{-2\alpha+o(1)}$), the same holds 
for the deviation probabilities conditioned on this event. 
So, let us just assume for now that the number of the excursions
is \emph{exactly} $\frac{2\alpha\ln^2 n}{\ln\ln n}$ a.s.,
 and see where will it lead us.

Assume without restriction of generality that $0\in A$.
Then, Lemmas~\ref{l_exit_balls} and~\ref{l_hit_A} imply that
\begin{itemize}
 \item the probability that an excursion hits the origin
is roughly $\frac{\ln\ln n}{\ln (n/3)}$;
 \item provided that $\capa(A)\ll \ln n$, 
the probability that an excursion hits the set~$A$
is roughly $\frac{\ln\ln n}{\ln (n/3)}
\big(1+\frac{\pi\capa(A)}{2\ln (n/3)}\big)$.
\end{itemize}
So, the \emph{conditional} probability~$p_*$ 
that an excursion does not hit~$A$ given that it does not 
hit the origin is
\[
 p_*\approx \frac{1-\frac{\ln\ln n}{\ln (n/3)}
\big(1+\frac{\pi\capa(A)}{2\ln (n/3)}\big)}{1-\frac{\ln\ln n}{\ln (n/3)}}
\approx 1 - \frac{\pi\ln\ln n}{2\ln^2 n}\capa(A),
\]
and then we obtain
\begin{align*}
 \IP[\Upsilon_n A \subset 
   U_{t_\alpha}^{(n)} \mid 0\in U_{t_\alpha}^{(n)}] 
&\approx p_*^{N_\alpha}\\
&\approx \Big(1 - 
\frac{\pi\ln\ln n}{2\ln^2 n}\capa(A)\Big)^{\frac{2\alpha\ln^2 n}{\ln\ln n}}
\\
&\approx \exp\big(-\pi\alpha\capa(A)\big),
\end{align*}
which agrees with the statement of Theorem~\ref{t_conditional}. 

However, turning the above heuristics 
to a rigorous proof is not an easy task. The reason for this is that,
although~$N_\alpha$ is indeed concentrated around
$\frac{2\alpha\ln^2 n}{\ln\ln n}$, it is not \emph{concentrated enough}:
the probability that~$0$ is not 
hit during~$k$ excursions, where~$k$ varies over the ``typical''
values of~$N_\alpha$, changes too much. Therefore,
in the proof of Theorem~\ref{t_conditional} we take a different
route by considering the suitable $h$-transform of the walk, 
as explained below.

Define for $x\in\Z^2_n$ and \emph{any}~$t$ 
\[
h(t,x)=P_x[T_n(0)>t]
\]
(so that $h(t,x)=1$ for $t<0$).
To simplify the notations, let us also assume that~$t_\alpha$
is integer.
We will represent the conditioned random walk
as a time-dependent Markov chain, using
the Doob's $h$-transform. 
Indeed, it is well known and easily checked 
that the simple random
walk on~$\Z_n^2$ conditioned on the event $\{0\in U^{(n)}_{t_\alpha}\}$ 
is a time-dependent
Markov chain $\tX$ with transition 
probabilities given by
\begin{equation}
\label{df_h_transf}
 \IP[\tX_{s+1}=y \mid \tX_s=x]
   = \frac{h(t_\alpha-s-1,y)}{h(t_\alpha-s,x)}\times \frac{1}{4},
\end{equation}
if $x$ and~$y$ are neighbours, 
and equal to 0 otherwise.  For simpler notations, we do not indicate 
the dependence on~$t_\alpha$ in the notation $\tilde X$.
In order to  proceed, we need the following fact 
(its proof can be skipped on a first reading).
\begin{lem}
\label{l_reg_h}
For all $\lambda \in (0,1/5)$, there exist $c_1>0, n_1 \geq 2$,
$\sigma_1>0$ (depending on~$\lambda$) such that
for all~$n \geq n_1$, $1\leq \beta\leq \sigma_1\ln n$,  
 $\|x\|,\|y\|\geq \lambda n$, 
$|r|\leq \beta n^2$ and  all~$s\geq 0$,
\begin{equation}
\label{regularity_h}
\Big|\frac{h(s,x)}{h(s+r,y)}-1\Big| \leq \frac{c_1\beta}{\ln n} \;.
\end{equation}
\end{lem}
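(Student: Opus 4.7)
The plan is to decompose the ratio as
$$\frac{h(s,x)}{h(s+r,y)} = \frac{h(s,x)}{h(s,y)} \cdot \frac{h(s,y)}{h(s+r,y)}$$
and establish that each factor equals $1+O(\beta/\ln n)$, treating the spatial variation (first factor) and the temporal variation (second factor) separately. By symmetry one can also reduce to $r\ge 0$.

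For the \emph{temporal} factor, I would exploit the fact that on the torus, starting from any macroscopic point, the hitting time of the origin is well approximated by an exponential random variable with rate $\mu_n \asymp (n^2 \ln n)^{-1}$. Concretely, $h(s,y)-h(s+r,y)=P_y[s<T_n(0)\le s+r]$, so the task reduces to bounding the conditional probability $P_y[T_n(0)\le s+r\mid T_n(0)>s]$. By the strong Markov property at time $s$, this is the expectation under the conditional law of $X_s$ (given survival) of $1-h(r,\cdot)$, and I would estimate that $P_z[T_n(0)\le r]= O(\beta/\ln n)$ uniformly in $z$ with $\|z\|\ge \lambda' n$, using hitting-time estimates in the spirit of Lemma~\ref{l_exit_balls}. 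The small remaining mass of the conditional distribution near the origin is controlled by noting that conditioning on $\{T_n(0)>s\}$ suppresses neighbourhoods of $0$ compared to the unconditional law.

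For the \emph{spatial} factor, the idea is that, up to multiplicative error $O(1/\ln n)$, the killed chain forgets its macroscopic starting point quickly. I would pick an intermediate time $\sigma=\lfloor n^2\rfloor$ and use the decomposition $h(s,x)=E_x[h(s-\sigma,X_\sigma);T_n(0)>\sigma]$ (valid for $s\ge \sigma$; for $s<\sigma$ both $h(s,x)$ and $h(s,y)$ are already within $O(1/\ln n)$ of $1$, since $P_x[T_n(0)\le \sigma]=O(\sigma/(n^2\ln n))$ for macroscopic $x$). The key step is then to show that the conditional law of $X_\sigma$ given $\{T_n(0)>\sigma\}$ is essentially the same, in total variation up to $O(1/\ln n)$, for $x$ and $y$ with $\|x\|,\|y\|\ge \lambda n$. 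One implementation is a coupling: run the walks from $x$ and $y$ by a standard reflection/translation coupling on the torus, observing that on time scales up to $\sigma$ the probability of either walk entering a small neighbourhood of $0$ is only $O(1/\ln n)$, and that the two walks agree on a coupling event whose complement has probability $O(1/\ln n)$, so their conditional laws at time $\sigma$ differ by only $O(1/\ln n)$.

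The main obstacle will be the spatial factor: verifying rigorously that the conditional distribution of $X_\sigma$ given survival is $O(1/\ln n)$-insensitive to the macroscopic starting point. The difficulty lies in the rareness of the conditioning event being only polynomially small in $n$, so perturbations of order $1/\ln n$ must be tracked carefully. This should ultimately reduce to comparing escape probabilities from two different macroscopic points to a fixed macroscopic target, for which the potential-kernel machinery of Section~\ref{s_aux_SRW} (in particular Lemma~\ref{l_hit_A} applied after unfolding the torus on scales $\le n/3$) provides the required $O(1/\ln n)$ precision, since $a(x)/a(n)=1+O(1/\ln n)$ uniformly for $\|x\|\ge \lambda n$.
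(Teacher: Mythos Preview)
Your decomposition into spatial and temporal factors is natural, and the temporal factor is indeed handled in the paper essentially as you propose (via $h(s+r,y)=h(r,y)\,h(s,\nu)$ with $\nu$ the conditional law at time~$r$). But your treatment of the spatial factor has a genuine gap.

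The difficulty is that the lemma asks for a \emph{multiplicative} bound, while $h(s,\cdot)$ can be polynomially small in~$n$: at the time scales of interest ($s$ up to $t_\alpha$) one has $h(s,y)=n^{-2\alpha+o(1)}$. Your coupling argument at time~$\sigma=\lfloor n^2\rfloor$ would give that the conditional laws $\nu_x,\nu_y$ of $X_\sigma$ under $\{T_n(0)>\sigma\}$ satisfy $\|\nu_x-\nu_y\|_{\mathrm{TV}}=O(1/\ln n)$, hence
\[
\big|h(s-\sigma,\nu_x)-h(s-\sigma,\nu_y)\big|\le \|\nu_x-\nu_y\|_{\mathrm{TV}}\cdot\max_z h(s-\sigma,z).
\]
To turn this into a bound on the \emph{ratio} you would need $\max_z h(s-\sigma,z)\le C\,h(s-\sigma,\nu_y)$, which is essentially the spatial regularity you are trying to prove. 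The same circularity appears in your temporal argument when you assert that the conditional law of~$X_s$ given survival ``suppresses neighbourhoods of~$0$'': for large~$s$ the naive bound $P_y[X_s=z\mid T_n(0)>s]\le P_y[X_s=z]/h(s,y)$ blows up, and controlling this measure already requires the lemma.

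The paper resolves this with an \emph{induction on~$s$} in steps of size $c'n^2$. For $s\le c'n^2$ the bound is immediate since $h(s,\cdot)=1+O(1/\ln n)$ for suitable starting laws. For the inductive step, the mixing of the torus walk gives a decomposition $\nu P^{(c'n^2)}=(1-\eps)\mu_0+\eps\nu'$ with $\eps<1/3$ and $\nu'$ in a fixed class~$\M$ of measures that are not too concentrated near~$0$. Writing $h(s+c'n^2,\mu)=h(s,\mu P^{(c'n^2)})\cdot P_\mu[T_n(0)>c'n^2\mid G]$, the common $(1-\eps)\mu_0$ part cancels in the ratio, the $\eps$-part is controlled by the inductive hypothesis, and the second factor contributes only $O(1/\ln n)$. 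The point is that the recursion for the error has the form $e_{k+1}\le \eps\cdot 2e_k + O(1/\ln n)$ with $2\eps<1$, so it does \emph{not} accumulate. Your single-shot coupling misses this contraction mechanism; without it, iterating your step $\sim\ln^2 n$ times to reach small~$s$ would pile up errors of total size $\sim\ln n$, which is useless.
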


\begin{proof}
Denote 
\[
 h(t, \mu)  :=  P_{\mu}[T_n(0)>t],
\]
where $P_{\mu}[\cdot]$ is the probability for 
the simple random walk on $\Z_n^2$ starting from the initial distribution~$\mu$. 

Using the local CLT for the two-dimensional SRW (e.g., Theorem~2.3.11 of~\cite{LL10})
it is straightforward to obtain that for a large enough~$\kappa>2$ and all $t>n^2$ and
$x\in\Z^2_n$
\begin{equation}
\label{LCLT}
 P_0[X_t = x] \leq \frac{\kappa}{n^2}.
\end{equation}

 Let us define the set~$\M$ of probability
measures on~$\Z^2_n$ in the following way:
\[
 \M = \big\{\nu : \nu\big(B(j)\big)\leq 7\kappa j^2n^{-2} 
\text{ for all }j\leq \lambda n\big\};
\]
observe that any probability
measure concentrated on a one-point set~$\{x\}$ with $\|x\|\geq \lambda n$
belongs to~$\M$.
Assume from now on that~$n$ is odd, so that the simple random walk on~$\Z^2_n$
is aperiodic (the case of even~$n$ can be treated essentially 
in the same way, with some obvious modifications). Recall that the uniform measure~$\mu_0$ on~$\Z^2_n$, i.e., $\mu_0(x)=n^{-2}$ for all~$x\in\Z^2_n$,
is the invariant law of simple random walk  $X$ on~$\Z^2_n$. 
It is straightforward to observe that $\mu_0\in\M$;
moreover, it holds in fact that
 $\mu_0\big(B(j)\big)\leq 7 j^2n^{-2} 
\text{ for all }j\leq \lambda n$ (i.e., with~$\kappa=1$).
Thus, for any probability measure~$\nu$ such that 
$\nu(x)\leq \kappa\mu_0(x)$ for all~$x\in B(\lambda n)$
it holds that $\nu\in\M$. So, \eqref{LCLT} implies that
\begin{equation}
\label{belongs_to_M}
 \nu P^{(t)}\in \M \quad \text{ for any }\nu \text{ and all }t\geq n^2. 
\end{equation}

Let us abbreviate $\nu P^{(s)}(\cdot)=P_\nu [X_s\in \cdot\,]$.
Recall that the mixing time of~$X$ is of order~$n^2$ (e.g., Theorem~5.5 
in~\cite{LPW}).
Using the bound on the separation distance provided by
 Lemma~19.3 of~\cite{LPW},
it is clear that for any $\eps\in (0,1)$ one can find large enough~$c'$
(in fact, $c'=O(\ln \eps^{-1})$)
such that for any probability measure~$\nu$
it holds that $\nu P^{(s)}\geq (1-\eps)\mu_0$ for  all $s\geq (c'-1) n^2$.
Using~\eqref{belongs_to_M}, we obtain for all $s\geq c' n^2$,
\begin{eqnarray}
 \nu P^{(s)} 
&=&    (1-\eps)\mu_0 P^{(n^2)} +\big(  \nu P^{(s\!-\!n^2)}-(1-\eps)\mu_0\big)  P^{(n^2)} 
\nonumber \\ \label{spectralgap}
&=&
 (1-\eps)\mu_0 + \eps\nu', \quad \text{ with } \nu'\in \M.
\end{eqnarray}

We are now going to obtain that there exists some~$c_2>0$
such that for all~$b \in \{1,2,3,\ldots\}$ and all $\nu \in \M$,
\begin{equation}
\label{entre_0_nicht}
 h(bn^2,\nu) = P_\nu [T_n(0) > bn^2] \geq 1 - \frac{bc_2}{\ln n}.
\end{equation}
To prove~\eqref{entre_0_nicht}, let us first show that there exists~$c_3=c_3(\lambda)>0$ such that
\begin{equation} 
\label{escape_to_lambda_n}
 P_\nu\big[T_n(0) < T_n\big(\partial B(\lambda n)\big)\big]
 \leq \frac{c_3}{\ln n}
\end{equation}
for all $\nu \in \M$. Abbreviate 
$W_j=B\big(\frac{\lambda n}{2^{j-1}}\big)\setminus 
B\big(\frac{\lambda n}{2^j}\big)$ and write, 
using~\eqref{nothit_0}
\begin{align*}
 \sum_{x\in W_j} \nu(x)
P_x\big[T_n(0) < T_n\big(\partial B(\lambda n)\big)\big]
& \leq \sum_{x\in W_j}\nu(x)\Big(1-\frac{a(x)}{a(\lambda n)+O(n^{-1})}\Big)\\
& \leq 7\kappa n^{-2}\times \frac{\lambda^2n^2}{2^{2(j-1)}}\times
\Big(1-\frac{\ln \lambda n - j\ln 2}{\ln \lambda n+O(n^{-1})}\Big)\\
&\leq \frac{1}{\ln\lambda n} \times \frac{7\kappa j\lambda^2\ln 2}{2^{2(j-1)}}.
\end{align*}
Let $j_0$ be such that $\frac{\lambda n}{2^{j_0}}\leq \frac{n}{\sqrt{\ln n}}$.
Write  
\begin{align*}
\lefteqn{P_\nu\big[T_n(0) < T_n\big(\partial B(\lambda n)\big)\big]}\\
&\leq \nu\Big(B\Big(\frac{n}{\sqrt{\ln n}}\Big)\Big)
+ \sum_{j=1}^{j_0} \sum_{x\in W_j} \nu(x)
P_x\big[T_n(0) < T_n\big(\partial B(\lambda n)\big)\big]\\
&\leq \frac{7\kappa }{\ln n} + \frac{1}{\ln\lambda n} 
\sum_{j=1}^{j_0} \frac{7\kappa j\lambda^2\ln 2}{2^{2(j-1)}}
\end{align*}
for $\nu \in \M$, 
which proves~\eqref{escape_to_lambda_n}. To obtain~\eqref{entre_0_nicht},
let us recall that $\nu P^{(n^2)}\in\M$
for any~$\nu$ by~\eqref{belongs_to_M}. 
Observe that the number of excursions
by time~$n^2$
between $\partial B(\lambda n)$ and $\partial B(n/3)$ 
is stochastically bounded by a Geometric random variable
with expectation of constant order. Since (again by~\eqref{nothit_0})
for any $x\in \partial B(\lambda n)$
\[
 P_x\big[T_n(0) < T_n\big(\partial B(n/3)\big)\big] 
   \leq \frac{c_4}{\ln n},
\]
for some $c_4=c_4(\lambda)$,
and, considering a random sum of a geometric number of 
independent Bernoulli with parameter $c_4/\ln n$, 
using also~\eqref{escape_to_lambda_n} it is not difficult to obtain that
for any $\nu\in \M$
\begin{equation}
\label{for_b_0}
 P_\nu [T_n(0) \leq n^2] \leq \frac{c_5}{\ln n}.
\end{equation}
The inequality~\eqref{entre_0_nicht} then follows from~\eqref{for_b_0}
and the union bound,
\begin{equation}
\label{n_entre_pas_0}
P_\nu [T_n(0) \leq k n^2] \leq \sum_{j=0}^{k-1} 
P_{\nu P^{(jn^2})} [T_n(0) \leq n^2] \leq \frac{kc_5}{\ln n}.
\end{equation}

Now, let $c' \in \{1,2,3,\ldots\} $
 be such that $\eps < 1/3$ in~\eqref{spectralgap}.
Assume also that~$n$ is sufficiently large so that
 $\big(1-\frac{c'c_2}{\ln n}\big)^{-1}\leq 2$.
Then, \eqref{entre_0_nicht} implies that for all $s\leq c'n^2$ and 
$\nu\in\M$
\[
 \frac{h(s,\mu)}{h(s,\nu)}-1 \leq \frac{1}{1-\frac{c'c_2}{\ln n}}-1
=\Big(1-\frac{c'c_2}{\ln n}\Big)^{-1} \times\frac{c'c_2}{\ln n},
\]
and therefore
\begin{align}
 \frac{h(s,\mu)}{h(s,\nu)}-1 &\leq \frac{3c'c_2}{\ln n}
\qquad \text{ for any } \nu\in\M \text{ and arbitrary }\mu,
\label{whatweneed}\\
 1-\frac{h(s,\mu)}{h(s,\nu)} &\leq \frac{3c'c_2}{\ln n}
\qquad \text{ for any } \mu\in\M \text{ and arbitrary }\nu.
\label{whatweneed2}
\end{align}

We now extend~\eqref{whatweneed}--\eqref{whatweneed2} 
 from times $s \leq s_0=c'n^2$ to all times using
induction. Let $s_k=(k+1)c'n^2$, 
and consider the recursion hypothesis
\[
(H_k) : \qquad  \eqref{whatweneed}\ {\rm and } \ \eqref{whatweneed2} \; {\rm hold\ for\ } \ s \leq s_k,
\]
that we just have proved for $k=0$. Assume now $(H_k)$ for some $k$. 
Define the event $G_{r,s}=\{X_j\neq 0 \text{ for all }r+1\leq j\leq s\}$, 
and write
\begin{align}
 h(s+t,\mu) &= P_\mu[G_{t,s+t}]
 P_\mu[T_n(0)>t\mid G_{t,s+t}]\nonumber\\
&= h(s,\mu P^{(t)}) P_\mu[T_n(0)>t\mid G_{t,s+t}].
\label{Markov_future}
\end{align}
Abbreviate  $t= c'n^2$ for the rest of the proof of the Lemma.
Let us estimate the second term in the right-hand side
 of~\eqref{Markov_future}. Let~$\Gamma_{[0,t]}$ be the set 
of all nearest-neighbour trajectories on~$\Z^2_n$ of length~$t$.
For $\vr\in \Gamma_{[0,t]}$ we have 
$P_\mu[\vr]=\mu(\vr_0)\big(\frac{1}{4}\big)^{|\vr|}$ and
\[
 P_\mu[\vr\mid G_{t,s+t}] = \mu(\vr_0)\Big(\frac{1}{4}\Big)^{|\vr|}
\times \frac{h(s,\vr_{\text{end}})}{h(s, \mu P^{(t)})}
\leq \mu(\vr_0)\Big(\frac{1}{4}\Big)^{|\vr|} 
\Big(1+\frac{3c'c_2}{\ln n}\Big)
\]
using the relation~\eqref{whatweneed} for $s \leq s_k$.
Summing over  $\vr$ such that $T_n(0)\leq t$ and using~\eqref{entre_0_nicht}, we obtain, for $\mu \in \M$, 
\begin{equation}
\label{time_rev_implies}
 P_\mu[T_n(0)>t\mid G_{t,s+t}] 
  \geq 1 - \frac{c'c_2}{\ln n}\Big(1+\frac{3c'c_2}{\ln n}\Big) .
\end{equation}

Now, we use~\eqref{spectralgap} and~\eqref{Markov_future} 
to obtain that, with $\mu', \nu'$ defined in~\eqref{spectralgap},  
\begin{align*}
\frac{h(s+t,\mu)}{h(s+t,\nu)} &= 
\frac{h(s,\mu P^{(t)})P_\mu[T_n(0)>t\mid G_{t,s+t}]}
{h(s,\nu P^{(t)})P_\nu[T_n(0)>t\mid G_{t,s+t}]}\\
&=\frac{\big(1-\eps+\eps\frac{h(s,\mu')}{h(s,\mu_0)}\big)
P_\mu[T_n(0)>t\mid G_{t,s+t}]}
{\big(1-\eps+\eps\frac{h(s,\nu')}{h(s,\mu_0)}\big)
P_\nu[T_n(0)>t\mid G_{t,s+t}]},
\end{align*}
for $s \leq s_k$. 
We now use $(H_k)$ for the two ratios of $h$'s in the above expression, we also use~\eqref{time_rev_implies}
for the conditional probability in the denominator --  simply bounding it by~$1$ in the numerator -- 
to obtain 
\begin{align*}
\frac{h(s+t,\mu)}{h(s+t,\nu)}
\leq \frac{\big(1-\eps+\eps\big(1+\frac{3c'c_2}{\ln n}\big)\big)}
{\big(1-\eps+\eps\big(1-\frac{3c'c_2}{\ln n}\big)\big)
\big(1 - \frac{c'c_2}{\ln n}\big(1+\frac{3c'c_2}{\ln n}\big)\big)},
\end{align*}
that is,
\[
\frac{h(s+t,\mu)}{h(s+t,\nu)}-1
\leq (6\eps+1)\frac{c'c_2}{\ln n} + o\big((\ln n)^{-1}\big)
\]
for $\nu\in \M$.
Since $\eps<1/3$, for large enough~$n$ we obtain
that~\eqref{whatweneed}
also holds for all $s\leq s_{k+1}$.
In the same way, we prove the validity of~\eqref{whatweneed2} 
 for $s\leq s_{k+1}$.

This proves the recursion, which 
in turn implies~\eqref{regularity_h} for the case~$r=0$.
To treat the general case, observe that 
\begin{equation}
\label{h_decomposition}
h(s+r,y)=h(r,y)h(s,\nu), \quad \text{where }
\nu(\cdot)=P_y[X_r=\cdot\mid T_n(0)>r].
\end{equation}
Note that, by~\eqref{n_entre_pas_0}, we can choose~$\sigma_1$
in such a way that $P_y[T_n(0)>r]\geq \frac{1}{2}$.
Now, without loss of generality, we can assume that $r\geq {\tilde c}n^2$,
where~${\tilde c}$ is such that $2P_y[X_t=\cdot\,]\in\M$ for all 
$t\geq {\tilde c}n^2$
(clearly, such~${\tilde c}$ exists; e.g., consider~\eqref{spectralgap}
with $\eps=1/2$). Then, the general case in~\eqref{regularity_h}
follows from~\eqref{n_entre_pas_0} and~\eqref{h_decomposition}.
\end{proof}

Now we are able to prove Theorem~\ref{t_conditional}.

\begin{proof}[Proof of Theorem~\ref{t_conditional}]
Abbreviate $\delta_{n,\alpha} = C\alpha\sqrt{\frac{\ln\ln n}{\ln n}}$ 
and 
\[
 I_{\delta_{n,\alpha}} = \Big[(1-\delta_{n,\alpha})
\frac{2\alpha\ln^2 n}{\ln\ln n},
    (1+\delta_{n,\alpha})\frac{2\alpha\ln^2 n}{\ln\ln n}\Big].
\]
Let $N_\alpha$ be the number of excursions 
between $\partial B\big(\frac{n}{3\ln n}\big)$ and $\partial B(n/3)$
up to time~$t_\alpha$.
It holds that
 $\IP\big[0\in U^{(n)}_{t_\alpha}\big] =n^{-2\alpha+o(1)}$, 
see e.g.\ (1.6)--(1.7) in \cite{CGPV}. Then,
observe that~\eqref{eq_excursions_torus} implies that
\begin{align*}
 \IP\big[N_\alpha\notin I_{\delta_{n,\alpha}} \; \big|\;
 0\in U^{(n)}_{t_\alpha}\big] 
\leq \frac{\IP[N_\alpha\notin I_{\delta_{n,\alpha}}]}
{\IP[0\in U^{(n)}_{t_\alpha}]} \leq n^{2\alpha+o(1)}
 \times n^{-C'\alpha^2},
\end{align*}
where~$C'$ is a constant that can be made arbitrarily
large by making the constant~$C$ in the definition of~$\delta_{n,\alpha}$
 large enough.
So, if~$C$ is large enough, for some $c''>0$ it holds that
\begin{equation}
\label{cond_numb_exc}
\IP\big[N_\alpha\in I_{\delta_{n,\alpha}} \; 
\big|\; 0\in U^{(n)}_{t_\alpha}\big] 
  \geq 1 - n^{-c''\alpha}.
\end{equation}

We assume that the set~$A$ is fixed, so that $\capa(A)=O(1)$ and diam$(A)=O(1)$.
In addition, assume without loss of generality that $0\in A$.
Recall that
with~\eqref{cond_numb_exc} we control the number of excursions
between $\partial B\big(\frac{n}{3\ln n}\big)$ and $\partial B(n/3)$
up to time~$t_\alpha$. Now, we estimate the (conditional) probability 
that an excursion hits the set~$A$. For this, observe that 
Lemmas~\ref{l_exit_balls}, 
\ref{l_hit_A} and~\ref{l_relation_S_hatS} imply that,
for any $x\in \partial B\big(\frac{n}{3\ln n}\big)$
\begin{align} 
\lefteqn{\hP_x\big[\htau_1(A)>\htau_1(\partial B(n/3))\big]}
\nonumber\\
&= \frac{P_x\big[\tau_1(A)>\tau_1(\partial B(n/3)), 
\tau_1(0)> \tau_1(\partial B(n/3))\big]}{P_x\big[\tau_1(0)>
\tau_1(\partial B(n/3))\big]} \big(1+O((n\ln n)^{-1}))\big)
\nonumber\\ \nonumber
 & = \frac{a(x)-\capa(A)+O(\frac{\ln^2 n}{n})}
{a(n/3)-\capa(A) +O(\frac{\ln^2 n}{n})}
\times \frac{a(n/3)+O(n^{-1})}{a(x)}
  \big(1+O((n\ln n)^{-1}))\big)
 \\ \nonumber
 & = \frac{1-\frac{\capa(A)}{a(x)}}
{1-\frac{\capa(A)}{a(n/3)}}
 \big(1+O(n^{-1}\ln n)\big)\\ \label{eq:decadix}
 & = 1 - \frac{\pi}{2}\capa(A)\frac{\ln\ln n}{\ln^2 n}
 \big(1+o(1)\big).
\end{align}
Note that the above is for $\s$-excursions; we still need
to transfer this result to the conditioned random walk
on the torus.

Recall the notation $\Gamma^{(x)}_{0,R}$ from the 
beginning of Section~\ref{s_aux_hat_s}.
Then,
for a fixed $x \in \partial B\big(\frac{n}{3\ln n}\big)$
 let us define the set of paths
\[
 \Lambda_j =  \big\{\vr \in \Gamma^{(x)}_{0,n/3} :
  (j-1)n^2<|\vr|\leq jn^2\big\}.
\] 
It is straightforward to obtain that (since, regardless of the 
starting position, after $O(n^2)$ steps the walk goes out 
of~$B(n/3)$ with uniformly positive probability)
\begin{equation}
\label{sortir_boule}
 \max\big(P_x[\Lambda_j],\hP_x[\Lambda_j]\big) \leq e^{-cj}
\end{equation}
for some $c>0$.

To extract from~\eqref{eq:decadix} the corresponding formula for the 
$\tX$-excursion, we first observe that, for 
$x\in \partial B\big(\frac{n}{3\ln n}\big)$ and $s\geq n^2\sqrt{\ln n}$
\begin{align}
 h(s,x) &= P_x[T_n(0)>T_n(\partial B(n/3))]\times
P_x[T_n(0)> s \mid T_n(0)>T_n(\partial B(n/3))] \nonumber\\
&\qquad  \qquad \qquad \qquad+ P_x[T_n(\partial B(n/3))\geq T_n(0)>s]
\nonumber\\
&= \frac{a(x)}{a(n/3)+O(n^{-1})}P_x[T_n(0)> s
\mid T_n(0)>T_n(\partial B(n/3))]  
+ \psi_{x,s,n}\nonumber\\
& = \frac{a(x)}{a(n/3)+O(n^{-1})} 
\sum_{\substack{ y\in\partial B(n/3), \\k\geq 1}} h(s-k,y)
\ell_{y,k}
 + \psi_{x,s,n},
 \label{transfer_h}
\end{align}
where $\ell_{y,k}=P_x\big[X_{T_n(\partial B(n/3))}=y,
T_n(\partial B(n/3))=k  \mid T_n(0)>T_n(\partial B(n/3))\big]$
and $\psi_{x,s,n}= P_x[T_n(\partial B(n/3))\geq T_n(0)>s]$.
Clearly, by~\eqref{sortir_boule}, 
$\psi_{x,s,n} \leq \max_x P_x[T_n(\partial B(n/3))>s] 
\leq e^{-Cs/n^2}$ for some $C>0$.

Also, we need the following fact: there exist $c_6, c'_6>0$
such that 
\begin{equation}
\label{h_lower}
 h(s,x) \geq \frac{c'_6}{\ln n} \exp\Big(-\frac{c_6 s}{n^2\ln n}\Big)
\end{equation}
for all~$s$ and all~$x\in\Z^2_n\setminus\{0\}$.
To prove~\eqref{h_lower}, it is enough to observe that
\begin{itemize}
 \item  a particle 
   starting from~$x$ will reach $\partial B\big(\frac{n}{3\ln n}\big)$
   without hitting~$0$ with probability at least $O\big(\frac{1}{\ln n}\big)$;
 \item the number of (possibly incomplete) excursions 
between $\partial B\big(\frac{n}{3\ln n}\big)$ and $\partial B(n/3)$ until 
time~$s$ does not exceed $\lceil\frac{3s}{n^2 \ln\ln n}\rceil$
with at least constant probability
by Lemma~\ref{l_excursions_torus}; and
 \item regardless of the past, each excursion hits~$0$
 with probability $\frac{\ln\ln n}{\ln n}(1+o(1))$, by (\ref{nothit_0}).
\end{itemize}
Observe that Lemma~\ref{l_reg_h} and~\eqref{h_lower} imply that 
for any $y,y'\in\partial B(n/3)$ and any $t,r\geq 0$
(in the following, $\nu(\cdot) = P_{y'}[X_t=\cdot\mid T_n(0)>t]$)
\begin{equation}
\label{upper_h_over_h}
 \frac{h(t,y)}{h(t+r,y')} = \frac{h(t,y)}{h(t,y')h(r,\nu)}
 \leq c'' \ln n \times \exp\Big(\frac{c_6 r}{n^2\ln n}\Big).
\end{equation}

Now, going back to~\eqref{transfer_h} 
{and setting 
$a_j^{(n)}  = \frac{c_1j}{\ln n}$ with~$c_1$ from Lemma~\ref{l_reg_h}}, observe that
for \emph{any} $y_0\in\partial B(n/3)$
(recall that $t_\alpha = \frac{4\alpha}{\pi}n^2\ln^2 n$)
\begin{align}
 \lefteqn{\sum_{\substack{y\in\partial B(n/3),\\1\leq k\leq t_\alpha}} 
h(s-k,y)
\ell_{y,k}}\nonumber\\
&= \sum_{1\leq j \leq \sigma_1\ln n}
\sum_{\substack{y\in\partial B(n/3),\\ (j-1)n^2<k\leq jn^2}} h(s-k,y)
\ell_{y,k}
+\sum_{\substack{y\in\partial B(n/3),\\ 
\sigma_1n^2\ln n< k \leq n^2\ln^{4/3} n }}
h(s-k,y) \ell_{y,k}
\nonumber\\
& \quad{}+\sum_{\substack{y\in\partial B(n/3),\\ 
n^2\ln^{4/3} n < k \leq  t_\alpha}}
h(s-k,y) \ell_{y,k}
\nonumber\\
&= \sum_{1\leq j \leq \sigma_1\ln n}
\sum_{\substack{y\in\partial B(n/3),\\ (j-1)n^2<k\leq jn^2}}
\ell_{y,k}h(s,y_0)
\Big(1+ O\big(a_j^{(n)}\big)\Big) 
\nonumber\\
& \quad{}+\sum_{\substack{y\in\partial B(n/3),\\ 
\sigma_1n^2\ln n < k \leq n^2\ln^{4/3} n }}
c'' \ell_{y,k}h(s,y_0)  \exp\big(c_6 \ln^{1/3} n\big)\ln n
\nonumber\\
& \quad{}+\sum_{\substack{y\in\partial B(n/3),\\ 
n^2\ln^{4/3} n < k \leq  t_\alpha}}
c'' \ell_{y,k}h(s,y_0) \exp\Big(\frac{4\alpha c_6 r \ln n}{\pi}\Big)\ln n 
\nonumber\\
&= h(s,y_0) \Bigg(1+O\Big( \sum_{j\geq 1}e^{-cj} a_j^{(n)}\Big)
+ O\big(\exp\big(-(C\sigma_1\ln n-c_6 \ln^{1/3} n)\big)\ln n\big)
\nonumber\\
& \qquad \qquad \qquad {}+ O\Big(\exp\Big(-C\ln^{4/3}n + 
\frac{4\alpha c_6 r \ln n}{\pi}\Big)\ln n\Big)\Bigg)
\nonumber\\
&=h(s,y_0) \Big(1+O\Big(\frac{1}{\ln n}\Big)\Big),
\label{regularize_sum_h}
\end{align}
due to Lemma~\ref{l_reg_h}, \eqref{sortir_boule} and~\eqref{upper_h_over_h}. 

We plug~\eqref{regularize_sum_h}
into~\eqref{transfer_h}, divide by~$h(s,y_0)$, and use~\eqref{h_lower}
to obtain, for 
$x\in \partial B\big(\frac{n}{3\ln n}\big)$ and $s\geq n^2\sqrt{\ln n}$
\[
 \frac{h(s,x)}{h(s,y_0)} = \frac{a(x)}{a(n/3)+O(n^{-1})}
 \Big(1+O\Big(\frac{1}{\ln n}\Big)\Big)
 +\psi'_{s,n},
\]
where $|\psi'_{s,n}|\leq (c'_6)^{-1}\exp\big(-\frac{s}{n^2}
\big(c-\frac{c_6}{\ln n}\big)\big)$. Equivalently, 
\begin{equation}
\label{frac_h}
 \frac{h(s,y_0)}{h(s,x)} = \frac{a(n/3)}{a(x)}
 \Big(1+O\Big(\frac{1}{\ln n}\Big)\Big).
\end{equation} 

For $A\subset \Z^2$,
let us define also the hitting times of the 
corresponding set on the torus
for the $\tX$-walk \emph{after}
a given time~$s$:  
\[
 \widetilde{T}_n^{(s)}(A) = \min\{k\geq s: \tX_k\in \Upsilon_nA\};
\]
we abbreviate $\widetilde{T}_n(A)=\widetilde{T}_n^{(0)}(A)$.
Write
\begin{align*}
 \lefteqn{\IP\big[\widetilde{T}^{(s)}_n(A)
 <\widetilde{T}^{(s)}_n(\partial B(n/3))
 \mid \tX_s=x\big]}\\
 &= \sum_{\vr} \frac{h(t_\alpha-s-|\vr|, \vr_{\text{end}})}{h(t_\alpha-s,x)}
\Big(\frac{1}{4}\Big)^{|\vr|}\\
 &= \Bigg(\sum_{\vr: \frac{|\vr|}{n^2} \leq \sqrt{\ln n}} 
 + \sum_{\vr: \sqrt{\ln n}< \frac{|\vr|}{n^2} \leq \ln^{4/3} n }
  + \sum_{\vr:  \frac{|\vr|}{n^2} > \ln^{4/3} n}\Bigg)
 \frac{h(t_\alpha-s-|\vr|, \vr_{\text{end}})}{h(t_\alpha-s,x)}
\Big(\frac{1}{4}\Big)^{|\vr|},
\end{align*}
where the sums are over paths~$\vr$ that begin in~$x$,
 end on the first visit to~$\partial B(n/3)$, and touch~$A$ 
without touching~$0$.

Using Lemma~\ref{l_reg_h}, \eqref{sortir_boule} and~\eqref{frac_h} 
for the first 
sum, and dealing with the second and third sums as in the 
derivation of~\eqref{regularize_sum_h},
we obtain
(recall that the term $\hP_x\big[\htau_1(A)<\htau_1(\partial B(n/3))\big]$
is of order $\frac{\ln\ln n}{\ln^2 n}$)
\begin{align*}
 \lefteqn{\IP\big[\widetilde{T}^{(s)}_n(A)
 <\widetilde{T}^{(s)}_n(\partial B(n/3))
 \mid \tX_s=x\big]}\\
&=  \sum_{\vr} \frac{a(n/3)}{a(x)}
\Big(\frac{1}{4}\Big)^{|\vr|}\Big(1+O\Big(\frac{1}{\sqrt{\ln n}}\Big)\Big)
+O\big(\exp(-C\sqrt{\ln n})\big) + O\big(\exp(-C\ln^{-4/3} n)\big)\\
&=\hP_x\big[\htau_1(A)<\htau_1(\partial B(n/3))\big]
\Big(1+O\Big(\frac{1}{\sqrt{\ln n}}\Big)\Big);
\end{align*}
again, the sum is over all paths~$\vr$ that begin in~$x$,
 end on the first visit to~$\partial B(n/3)$, and touch~$A$ 
without touching~$0$ (observe that the last equality comes 
   from the definition of~$\s$).
So, using (\ref{eq:decadix}), we obtain
for all $s\leq t_\alpha - n^2\sqrt{\ln n}$ 
and all $x \in \partial B\big(\frac{n}{3 \ln n}\big)$,
\begin{equation}
\label{exc_tilde_hitsA}
 \IP\big[\widetilde{T}^{(s)}_n(A)>\widetilde{T}^{(s)}_n(\partial B(n/3))
 \mid \tX_s=x\big] 
= 1-\frac{\pi}{2}\capa(A)\frac{\ln\ln n}{\ln^2 n}(1+o(1)).
\end{equation}

Before we are able to conclude the proof of Theorem~\ref{t_conditional}, we 
need another step to take care of times close to~$t_\alpha$. 
Consider any $x\in\partial B\big(\frac{n}{3\ln n}\big)$
and any~$s\geq 0$ such that $t_\alpha-s\leq n^2\sqrt{\ln n}$. 
Then, \eqref{df_h_transf} and~\eqref{h_lower} 
together with the fact that $h(\cdot, \cdot)$
is nonincreasing with respect to the first (temporal) argument
imply that
\begin{align}
 \IP_x\big[(\tX_0,\ldots,\tX_{|\vr|})=\vr\big]
 &= \frac{h(t_\alpha-s-|\vr|, \vr_\text{end})}{h(t_\alpha-s,x)}
 \IP_x\big[(X_0,\ldots,X_{|\vr|})=\vr\big]
 \nonumber\\
 &\leq c_8 \IP_x\big[(X_0,\ldots,X_{|\vr|})=\vr\big]
 \label{measure_comparison}
\end{align}
for any path~$\vr$ with $\vr_0=x$.
Then, similarly to Section~\ref{s_aux_torus}, define 
$\tilde{J}_k$  $\tilde{D}_k$ to be the starting and
ending times of $k$th excursion of~$\tX$
between~$\partial B\big(\frac{n}{3\ln n}\big)$ 
and~$\partial B(n/3)$, $k\geq 1$. Let
\[
 \zeta = \min\{k: \tilde{J}_k\geq t_\alpha-n^2\sqrt{\ln n}\}
\]
be the index of the first $\tX$-excursion that 
starts after time $t_\alpha-n^2\sqrt{\ln n}$. 
Let $\xi'_1,\xi'_2,\xi'_3,\ldots$ be a sequence of i.i.d.\
Bernoulli random variables independent of everything,
with
\[
\IP[\xi'_k=1]=1-\IP[\xi'_k=0] 
  =1-\frac{\pi}{2}\capa(A)\frac{\ln\ln n}{\ln^2 n}.
\]
For $k\geq 1$ define
two sequences of random variables
\[
 \xi_k = \begin{cases}
          \1{\tX_j\notin A\text{ for all } j\in[\tilde{J}_k,\tilde{D}_k]},
            & \text{ for } k<\zeta,\\
            \xi'_k, & \text{ for } k\geq \zeta,
         \end{cases}
\]
and
\[
 \eta_k = \1{\tX_j\notin A\text{ for all } j\in[\tilde{J}_{\zeta+k-1},
 \tilde{D}_{\zeta+k-1}]}.
\]

Now, observe that~\eqref{exc_tilde_hitsA} and the 
strong Markov property imply that
\begin{equation}
\label{hit_normalexcursion}
 \IP[\xi_k=1\mid \xi_1,\ldots,\xi_{k-1}] = 
 1-\frac{\pi}{2}\capa(A)\frac{\ln\ln n}{\ln^2 n}(1+o(1)).
\end{equation}
Also,
the relation~\eqref{measure_comparison}
together with Lemma~\ref{l_hit_A} imply that
\begin{equation}
\label{hit_lastexcursion}
 \IP[\eta_k=0\mid \eta_1,\ldots,\eta_{k-1}]\leq c_8 \frac{\ln\ln n}{\ln n}.
\end{equation}
Denote
\[
 \zeta' = \max\{k: \tilde{J}_k\leq t_\alpha\}.
\]
Then, \eqref{measure_comparison} and Lemma~\ref{l_excursions_torus}
imply that (note that $\pi/2<3$)
\begin{equation}
\label{number_lastexcursions}
 \IP\Big[\zeta'-\zeta\geq  \frac{3\sqrt{\ln n}}{\ln\ln n}\Big]
 \leq \exp\Big(-c_9 \frac{\sqrt{\ln n}}{\ln\ln n}\Big).
\end{equation}

Recall the notation~$\delta_{n,\alpha}$ from the 
beginning of the proof of this theorem.
We can write (recall~\eqref{df_N'a})
\begin{align*}
\lefteqn{ \IP\Big[\xi_k=1 \text{ for all }k\leq 
 (1-\delta_{n,\alpha})\frac{2\alpha\ln^2 n}{\ln\ln n}
-\frac{3\sqrt{\ln n}}{\ln\ln n}\Big]}\\
 &\geq
\IP\Big[\widetilde{T}_n(A)>t_\alpha, N'_\alpha \geq (1-\delta_{n,\alpha})
 \frac{2\alpha\ln^2 n}{\ln\ln n}, 
\zeta'-\zeta \leq\frac{3\sqrt{\ln n}}{\ln\ln n}\Big]
\end{align*}
so
\begin{align}
 \IP\big[\widetilde{T}_n(A)>t_\alpha\big] &\leq 
 \IP\Big[\xi_k=1 \text{ for all }k\leq 
 (1-\delta_{n,\alpha})\frac{2\alpha'\ln^2 n}{\ln\ln n}
-\frac{3\sqrt{\ln n}}{\ln\ln n}\Big]
 \nonumber\\
 &\qquad + \IP\Big[N'_\alpha < (1-\delta_{n,\alpha})
 \frac{2\alpha\ln^2 n}{\ln\ln n}\Big]
+\IP\Big[\zeta'-\zeta>\frac{3\sqrt{\ln n}}{\ln\ln n}\Big].
\label{main_upper}
\end{align}
Also,
\begin{align}
 \lefteqn{\IP\big[\widetilde{T}_n(A)>t_\alpha\big]}
 \nonumber\\
 &\geq
 \IP\Big[\xi_k=1 \text{ for all }k\leq 
 (1+\delta_{n,\alpha})\frac{2\alpha\ln^2 n}{\ln\ln n},
 N_\alpha\leq (1+\delta_{n,\alpha})
 \frac{2\alpha\ln^2 n}{\ln\ln n},
 \nonumber\\
&\qquad\quad 
 \eta_k=1 \text{ for all }k\leq \frac{3\sqrt{\ln n}}{\ln\ln n},
  \zeta'-\zeta \leq \frac{3\sqrt{\ln n}}{\ln\ln n},
\tX_0\notin B\Big(\frac{n}{3\ln n}\Big)\Big]
 \nonumber\\
 &\geq \IP\Big[\xi_k=1 \text{ for all }k\leq 
 (1+\delta_{n,\alpha})\frac{2\alpha\ln^2 n}{\ln\ln n}\Big]
 - \IP\Big[N_\alpha > (1+\delta_{n,\alpha})
 \frac{2\alpha\ln^2 n}{\ln\ln n}\Big]
  \nonumber\\
  &\quad -
 \IP\Big[\Big(\eta_k=1 \text{ for all }k\leq 
 \frac{3\sqrt{\ln n}}{\ln\ln n}\Big)^\complement\Big]
 -\IP\Big[\zeta'-\zeta > \frac{3\sqrt{\ln n}}{\ln\ln n}\Big]
-O\Big(\frac{1}{\ln^2 n}\Big).
\label{main_lower}
\end{align}
Using~\eqref{hit_normalexcursion},
we obtain that the first terms in the right-hand
sides of~\eqref{main_upper}--\eqref{main_lower} are both
equal to
\[
 \Big(1-\frac{\pi}{2}\capa(A)\frac{\ln\ln n}{\ln^2 n}(1+o(1))
 \Big)^{\frac{2\alpha \ln^2 n}{\ln \ln n}}
 = (1+o(1))\exp\big(-\pi\alpha\capa(A)\big).
\]
The other terms in the right-hand
sides of~\eqref{main_upper}--\eqref{main_lower} are~$o(1)$
due to~\eqref{cond_numb_exc}, \eqref{hit_lastexcursion}, and
\eqref{number_lastexcursions}.
Since, by~\eqref{df_h_transf},
\[
\IP[\Upsilon_n A \subset    U_{t_\alpha}^{(n)} 
\mid 0\in U_{t_\alpha}^{(n)}]
=\IP\big[\widetilde{T}_n(A)>t_\alpha\big],
\]
the proof of Theorem~\ref{t_conditional} is concluded.
\end{proof}

\section*{Acknowledgements}
The authors are grateful to Caio Alves and Darcy Camargo for 
pointing out to us that $\frac{1}{a}$ is a martingale for the 
conditioned walk, and to David Belius and Augusto Teixeira
for useful discussions. Darcy Camargo also did the simulations
of the model presented on Figure~\ref{f_simulation}.
The authors thank the referees for their careful
reading of the paper and many valuable comments and suggestions.

Also, the authors thank
 the financial support from Franco-Brazilian Scientific Cooperation
program.
S.P.\ and M.V. were partially supported by
CNPq (grants 300886/2008--0 and 301455/2009--0). 
The last two authors thank FAPESP (2009/52379--8,
2014/06815--9, 2014/06998--6)  
for financial support. F.C.\ is partially supported by 
 MATH Amsud  program
15MATH01-LSBS.


\end{document}